\newtheorem{theorem}{Theorem}[section]
\newtheorem*{theorem*}{Theorem}
\newtheorem{lemma}[theorem]{Lemma}
\newtheorem*{lemma*}{Lemma}
\newtheorem{proposition}[theorem]{Proposition}
\newtheorem*{proposition*}{Proposition}
\newtheorem{corollary}[theorem]{Corollary}
\newtheorem*{corollary*}{Corollary}
\newtheorem*{claim*}{Claim}
\newtheorem*{fact*}{Fact}
\newtheorem*{conjecture*}{Conjecture}
\theoremstyle{definition}
\newtheorem{definition}[theorem]{Definition}
\newtheorem*{definition*}{Definition}
\newtheorem{example}[theorem]{Example}
\newtheorem*{example*}{Example}
\newtheorem{remark}[theorem]{Remark}
\newtheorem*{remark*}{Remark}
\newtheorem*{question*}{Question}
\newtheorem*{assumption*}{Assumption}
\numberwithin{equation}{section}
\newcommand{\abs}[1]{\left\lvert#1\right\rvert}
\DeclareMathOperator{\End}{End}
\DeclareMathOperator{\rank}{rank}
\DeclareMathOperator{\Ad}{Ad}
\DeclareMathOperator{\ad}{ad}
\DeclareMathOperator{\tr}{tr}
\DeclareMathOperator{\id}{id}
\DeclareMathOperator{\im}{im}
\DeclareMathOperator{\GL}{GL}
\DeclareMathOperator{\Mat}{Mat}
\DeclareMathOperator{\SL}{SL}
\DeclareMathOperator{\Gal}{Gal}
\DeclareMathOperator{\Aut}{Aut}
\DeclareMathOperator{\pr}{pr}
\let\Re\relax
\DeclareMathOperator{\Re}{Re}
\let\Im\relax
\DeclareMathOperator{\Im}{Im}
\newcommand{\C}{{\mathbb C}}
\newcommand{\R}{{\mathbb R}}
\newcommand{\Q}{{\mathbb Q}}
\newcommand{\Z}{{\mathbb Z}}
\newcommand{\Half}{{\mathbb H}}
\newcommand{\w}{{\, \wedge \,}}
\newcommand{\pa}{{\partial}}
\newcommand{\bpa}{{\overline{\partial}}}
\newcommand{\conj}[1]{{\overline{#1}}}
\newcommand{\met}[2]{{\langle{#1},{#2}\rangle}}
\newcommand{\fa}{{\mathfrak a}}
\newcommand{\fd}{{\mathfrak d}}
\newcommand{\fg}{{\mathfrak g}}
\newcommand{\fh}{{\mathfrak h}}
\newcommand{\fk}{{\mathfrak k}}
\newcommand{\fm}{{\mathfrak m}}
\newcommand{\fn}{{\mathfrak n}}
\newcommand{\fu}{{\mathfrak u}}
\newcommand{\OO}[1]{{\mathcal O_{#1}}}
\newcommand{\unit}[1]{{{\mathcal O}_{#1}^{\times}}}
\newcommand{\unitp}[1]{{{\mathcal O}_{#1}^{\times, +}}}
\address{graduate school of mathematical sciences, the university of tokyo, 3-8-1 komaba, meguro, tokyo 153-8914, japan.}
\email{shuho@ms.u-tokyo.ac.jp}
\subjclass[2020]{32J18; 53C55}
\begin{document}
\title
[
A characterization of OT manifolds 
in LCK geometry
]
{
A characterization of Oeljeklaus-Toma manifolds 
in locally conformally K\"{a}hler geometry
}
\author{Shuho Kanda}
\date{}
\maketitle
\begin{abstract}
    We show that for a certain class of solvable Lie groups, 
if they admit a left-invariant non-Vaisman 
locally conformally K\"{a}hler metric and a lattice, 
they must arise from the construction of 
Oeljeklaus-Toma manifolds. 
This result provides a natural explanation for 
why number-theoretic considerations play a role in the construction of 
Oeljeklaus-Toma manifolds. 

\end{abstract}

\setcounter{tocdepth}{1}
\tableofcontents

\section{Introduction}

\subsection{Overview}

Oeljeklaus-Toma (OT, for short) manifolds, 
introduced by Oeljeklaus and Toma in \cite{OT05}, 
form a class of compact non-K\"{a}hler manifolds. 
They have solvmanifold structures \cite{Kas13b}, 
and a large subclass of them 
admits locally conformally K\"{a}hler (LCK, for short) metrics 
which are not Vaisman. 
As a result, 
OT manifolds have become an important class in the study of LCK geometry.
The construction of OT manifolds relies on number theory. 
In general, 
constructing lattices in solvable Lie groups is notoriously difficult. 
However, 
by using number fields, 
their rings of integers, 
and unit groups, 
one can obtain solvmanifolds with left-invariant complex structures. 
Interestingly, 
no other solvmanifolds are known to admit non-Vaisman LCK metrics 
aside from OT manifolds. 
This raises two fundamental questions: 
\emph{Why have no other examples been found?} 
And \emph{why does number theory play a crucial role in the only known case?}

In this paper, 
we prove that if a certain class of solvmanifolds 
admits a left-invariant LCK metric, 
then they must arise from the construction of OT manifolds. 
This result provides a natural explanation for 
why number-theoretic considerations play a role 
in the construction of lattices 
and suggests that OT manifolds provide a natural example 
in the study of LCK solvmanifolds.

\subsection{Background}

\subsubsection{Locally conformally K\"{a}hler geometry} 

A Hermitian manifold $(M,J,g)$ is called LCK 
if there exists an open covering $\{U_i\}_{i \in I}$ of $M$ 
and real-valued functions $f_i \in C^{\infty}(U_i)$ 
such that $e^{-f_i}g|_{U_i}$ are K\"{a}hler. 
The metric $g$ is LCK if and only if 
there exists a closed $1$-form $\theta$, 
which is called the \emph{Lee form}, 
such that $d\omega = \theta \w \omega$, 
where $\omega(\cdot \, , \cdot) \coloneqq g(J\cdot \, ,\cdot)$ 
is the fundamental form. 
These manifolds are generalizations of K\"{a}hler manifolds, 
and they have been much studied by many authors since the work of I. Vaisman in \cite{Vai76}. 
See for example \cite{DO98}, \cite{OV24}. 

These manifolds admit a special type of LCK metric known as the \emph{Vaisman metric}, 
which is considered closer to being K\"{a}hler.  
A Vaisman metric is an LCK metric for which the Lee form is parallel, 
that is, 
$\nabla \theta = 0$. 
Compact complex manifolds that do not admit any K\"{a}hler metrics 
but admit Vaisman metrics include classical examples such as Hopf manifolds 
and Kodaira-Thurston manifolds, 
both of which have long been known as non-K\"{a}hler manifolds. 

Vaisman metrics have been extensively studied, 
but they exhibit exceptionally good properties within the class of LCK manifolds. 
As a result, 
various invariants defined on LCK manifolds 
often become trivial when restricted to Vaisman manifolds. 
This motivates the study of non-Vaisman LCK metrics. 
The OT manifolds, 
which will be defined later, 
are significant as they provide 
a family of manifolds admitting a non-Vaisman LCK metric. 

\subsubsection{Solvmanifolds}

A classical method for obtaining compact non-K\"{a}hler manifolds 
is to take a simply connected solvable Lie group $G$ 
and quotient it by a lattice $\Gamma$, 
a discrete co-compact subgroup of $G$, 
forming the manifold $\Gamma \backslash G$, 
which is called a \emph{solvmanifold}. 
A left-invariant LCK structure on $G$, 
or equivalently, 
an LCK structure on the Lie algebra $\fg$ of $G$, 
gives rise naturally to 
an LCK structure on $\Gamma \backslash G$. 
Solvmanifolds are valuable as examples since 
various invariants can be explicitly computed on them 
(see for example \cite{Kas13a}). 

Solvmanifolds admitting a lattice are unimodular. 
So to obtain a solvmanifold admitting a non-Vaisman LCK metric, 
it is sufficient to construct 
\begin{itemize}
    \item[(i)] a unimodular solvable Lie algebra $\fg$ 
    admitting a non-Vaisman LCK metric, and 
    \item[(ii)] a lattice in the Lie group $G$ associated to $\fg$. 
\end{itemize}
Both (i) and (ii) are notoriously difficult to construct. 
In particular, 
for (ii), 
determining whether a solvable Lie group admits a lattice 
and explicitly constructing one 
is a highly non-trivial problem in general (see Example \ref{Ex:Sol3}). 
It is easy to determine whether a nilpotent Lie group admits a lattice 
as shown in \cite{Mal49}. 
However, 
in \cite{Saw07}, 
it is shown that any metric on unimodular nilpotent Lie algebras is Vaisman. 
In \cite{Saw21} and \cite{Saw24}, 
the author studied (i). 
In \cite{AO18}, 
the authors studied (i) and (ii) for Lie groups of the form 
$G=\R \ltimes \R^n$, 
and concluded that $n=3$ is necessary for $G$ admits a 
left-invariant non-Vaisman LCK metric and a lattice. 

In the construction of OT manifolds, 
number-theoretic considerations, 
such as Dirichlet’s unit theorem, 
are used to effectively construct (i) and (ii)
simultaneously. 

\subsubsection{Oeljeklaus-Toma manifolds} 
Let $K$ be a number field of degree $n = [K : \Q]$ 
which admits $s$ real embeddings 
$\sigma_1, \ldots ,\sigma_s : K \hookrightarrow \R$, 
and $2t$ complex embeddings 
$\sigma_{s+1}, \ldots ,\sigma_{s+2t} : K \hookrightarrow \C$ 
where $\sigma_{s+i} = \conj{\sigma_{s+t+i}}$. 
We assume that $s,t \ge 1$. 
Taking a free subgroup $U \subset \unit{K}$ of rank $s$
that satisfies appropriate conditions (Definition \ref{Def:admissible}), 
we can define a compact complex manifold $X(K,U)$ of 
complex dimension $s+t$. 
We call this manifold an \emph{OT manifold} of type $(s,t)$. 

Since their construction in \cite{OT05}, 
OT manifolds have been the subject of extensive research. 
In the context of LCK geometry, 
it is shown that an OT manifold admits an LCK metric 
if and only if its type is $(s,1)$ 
(\cite{OT05}, \cite{Vul14}, \cite{Dub14}, \cite{DV23}). 
In \cite{Kas13b}, 
it is proved that no OT manifold admits a Vaisman metric 
and that OT manifolds admit solvmanifold structures. 
We here just describe the solvmanifold structures. 

\begin{definition}
    Let $C=(c_{ij})_{ij} \in \Mat_{t\times s}(\C)$ be a complex matrix 
    such that
    \[
    \Re c_{1j}+\cdots+\Re c_{tj}=-\frac{1}{2}
    \]
    for all $1\le j \le s$. 
    We define a unimodular solvable Lie group
    $G_C=\R^s \ltimes_{\phi_C}(\R^s \oplus\C^t)$ 
    where the map $\phi_C : \R^s \to \GL(\R^s \oplus \C^t)$ 
    is the following: 
    \[
    \phi_C(t_1, \ldots ,t_s) = 
    \exp \left(
    \mathrm{diag} (t_1, \ldots ,t_s,(\sum_{j=1}^s c_{ij} t_j)_{i=1}^t)
    \right). 
    \]
    We call a Lie group \emph{OT-like} of type $(s,t)$ if it is 
    isomorphic to $G_C$ 
    for some matrix $C \in \Mat_{t\times s}(\C)$. 
\end{definition}

\begin{theorem}[\cite{Kas13b}]
    Let $X(K,U)$ be an OT manifold of type $(s,t)$. 
    Take the basis $(a_i)_{i=1}^s$ of $U$ and 
    a matrix $C=(c_{ij})_{ij} \in \Mat_{t \times s}(\C)$ so that 
    \[
    \sigma_{s+i}(a_k)=
    \exp \left( \sum_{j=1}^s c_{ij} \log(\sigma_j(a_k)) \right)
    \]
    for all $1 \le i \le t$. 
    Then, 
    there is a lattice $\Gamma$ in $G_C$ such that 
    $X(K,U) \simeq \Gamma \backslash G_C$. 
\end{theorem}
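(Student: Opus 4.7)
The plan is to exhibit an explicit diffeomorphism between $G_C$ and the universal cover $\Half^s \times \C^t$ of $X(K,U)$, and then to realise the standard OT deck transformation group $\Gamma_0 = U \ltimes \OO{K}$ as a lattice inside $G_C$ via the embeddings $\sigma_j, \sigma_{s+i}$. Concretely, I would define
\[
\Phi : G_C \longrightarrow \Half^s \times \C^t, \qquad
\bigl((t_1,\dots,t_s),\, (x_1,\dots,x_s,w_1,\dots,w_t)\bigr)
\longmapsto
\bigl((x_j + \iu e^{t_j})_{j=1}^s,\, (w_i)_{i=1}^t\bigr).
\]
This is a real-analytic diffeomorphism, and I would transport the group law of $G_C$ to $\Half^s \times \C^t$ along $\Phi$ so that $\Phi$ becomes a Lie group isomorphism. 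The transported left multiplication will act on $\Half^s \times \C^t$ as $(z,w)\mapsto (\lambda_j z_j + \beta_j,\, \mu_i w_i + \gamma_i)$ with $\lambda_j = e^{t_j}$ and $\mu_i = \exp(\sum_j c_{ij} t_j)$, matching exactly the form of the OT action.

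Next I would define the group homomorphism
\[
\iota : U \ltimes \OO{K} \longrightarrow G_C, \qquad
(u, a) \longmapsto
\bigl((\log \sigma_j(u))_{j=1}^s,\, (\sigma_j(a))_{j=1}^s,\, (\sigma_{s+i}(a))_{i=1}^t\bigr),
\]
where the action of $U$ on $\OO{K}$ is by ring multiplication in $K$. Checking that $\iota$ is a homomorphism reduces to two points: additivity of each $\sigma_j$ and $\sigma_{s+i}$ on $\OO{K}$ (obvious); and the semidirect-product compatibility
\[
\sigma_{s+i}(u) = \exp\Bigl(\sum_{j=1}^s c_{ij}\, \log \sigma_j(u)\Bigr)
\qquad\text{for all } u \in U,
\]
which is the content of the hypothesis on $C$ applied to the basis $(a_k)$ and extended multiplicatively to $U$. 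The only subtlety is that the defining identity for $C$ is an identity of complex logarithms modulo $2\pi \iu \Z$; after exponentiation this ambiguity disappears, so $\sigma_{s+i}$ matches $\exp\circ(\sum c_{ij} \cdot)$ on the whole of $U$.

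Having $\iota$, I would identify the image $\Gamma \coloneqq \iota(U \ltimes \OO{K}) \subset G_C$. By construction, the action of $\Gamma$ on $G_C$ by left multiplication, transported through $\Phi$, coincides with the standard OT action of $U \ltimes \OO{K}$ on $\Half^s \times \C^t$. Thus once $\Gamma$ is a lattice, the induced diffeomorphism $\Gamma \backslash G_C \simeq (U \ltimes \OO{K}) \backslash (\Half^s \times \C^t) = X(K,U)$ is immediate. Discreteness follows from Dirichlet's unit theorem, which says that $u \mapsto (\log \sigma_j(u))_j$ embeds $U$ as a lattice in $\R^s$, combined with the fact that $a \mapsto (\sigma_j(a),\sigma_{s+i}(a))$ embeds $\OO{K}$ as a full lattice in $\R^s \oplus \C^t$. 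Cocompactness is verified by projecting onto the $\R^s$-factor: the image is $\log\sigma(U)$, which has compact quotient, and the fibre over a fundamental domain is a compact $\OO{K}$-quotient of $\R^s \oplus \C^t$.

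The main obstacle I expect is the verification that $\iota$ really is a group homomorphism into $G_C$, because the semidirect-product law in $G_C$ couples the $\R^s$-factor to the $\R^s\oplus \C^t$-factor through the matrix $C$, and the compatibility on the complex coordinates hinges on the exponentiated form of the $C$-identity, which requires both the arithmetic content of Dirichlet's theorem (so that $(\log \sigma_j(a_k))_{jk}$ is nondegenerate and the $c_{ij}$ are genuinely defined) and the care to pass from logarithms to exponentials. Once this step is in place, the identification $\Gamma \backslash G_C \simeq X(K,U)$ is a formal consequence.
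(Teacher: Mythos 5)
Your proposal is correct and follows essentially the same route as the paper: your map $\Phi$ is the inverse of the paper's diffeomorphism $\psi$, your $\iota$ is exactly the paper's embedding of $U \ltimes \OO{K}$ into $G_C$, and the key point (the exponentiated identity $\sigma_{s+i}(u)=\exp(\sum_j c_{ij}\log\sigma_j(u))$ extended multiplicatively from the basis, with the branch ambiguity killed by exponentiation) is precisely the commutativity of the diagram in Lemma \ref{Lem:definition of phi}. The only cosmetic difference is that the paper simply quotes \cite{OT05} for the fixed-point-free, properly discontinuous, cocompact action (so $\Gamma$ is a lattice), whereas you sketch discreteness and cocompactness directly; note there that the lattice property of $\pr_{\R^s}(\log U)$ is the admissibility hypothesis on $U$ rather than a consequence of Dirichlet's theorem alone.
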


In this case, 
we say the OT-like Lie group $G_C$ is 
associated to the OT manifold $X(K,U)$. 
The construction of OT manifolds can be viewed as a method for constructing 
a matrix $C$ such that the OT-like Lie group $G_C$ admits a lattice. 
It is shown that an OT-like Lie group $G_C$ admits 
a left-invariant LCK structure 
if and only if the matrix $C$ satisfies $\Re{c_{ij}}=-1/2t$ 
for all $i,j$. 
We call $G_C$ \emph{LCK OT-like} if this condition is satisfied. 
Note that this condition is always satisfied when the type is $(s,1)$. 

Within this framework, the following natural question arise: 
\emph{Are there OT-like Lie groups admitting a lattice 
that do not come from the OT manifold construction?}. 

\subsection{Results}
Let $G_C$ be an OT-like Lie group. 
We call a lattice $\Gamma$ in $G_C=\R^s \ltimes_{\phi_C}(\R^s \oplus\C^t)$ \emph{of simple type} if 
the representation of $\Gamma$ on the $\Q$-vector space 
$(\Gamma \cap (\R^s \oplus\C^t)) \otimes_{\Z} \Q$ is irreducible. 
We call an OT manifold $X(K,U)$ \emph{of simple type} if $\Q(U)=K$. 
We can easily show that an OT-like Lie group $G_C$ associated to $X(K,U)$ admits 
a lattice of simple type if $X(K,U)$ is of simple type. 

\begin{theorem}
[= Theorem \ref{Thm:OT-like Lie group with simple lattice is constructed by OT}]
\label{Thm:Main}
    Let $G$ be an OT-like Lie group of type $(s,t)$. 
    If $G$ admits a lattice of simple type, 
    $G$ is associated to some OT manifold $X(K,U)$ 
    of type $(s,t)$ and of simple type. 
\end{theorem}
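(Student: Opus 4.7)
The plan is to reconstruct the number‑theoretic data $(K,U)$ of an OT manifold directly from the pair $(G_C,\Gamma)$ and then invoke Theorem~1.2 to identify $G_C$ as the OT‑like group associated to $X(K,U)$. The core mechanism is a Schur‑type argument: the simple‑type hypothesis lets me produce the field $K$ as the $\Q$‑algebra generated by the monodromy of $\Gamma$ on the nilradical.

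Set $N := \R^s \oplus \C^t$ and $\Lambda := \Gamma \cap N$; then $\Lambda$ is a lattice in $N$ and $\Gamma/\Lambda$ is a lattice $\cong \Z^s$ in the base $\R^s$. Choose generators $a_1,\dots,a_s$ of $\Gamma/\Lambda$, each lifting to an element of $\Gamma$ whose projection to $\R^s$ I denote $(t_{k1},\dots,t_{ks})$; conjugation in $\Gamma$ shows that $A_k := \phi_C(t_{k1},\dots,t_{ks})$ preserves $\Lambda$, hence restricts to an element of $\GL(V)$ with $V := \Lambda \otimes_\Z \Q$. Let $\mathcal A \subset \End_\Q(V)$ be the commutative $\Q$‑subalgebra generated by $A_1,\dots,A_s$. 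By the simple‑type assumption, $V$ is a simple faithful $\mathcal A$‑module, forcing $\mathcal A$ to be a field $K$ with $V\cong K$ as $K$‑modules and $[K:\Q]=s+2t$. The element $u_k\in K$ corresponding to $A_k$ is a unit (both $A_k$ and $A_k^{-1}$ preserve $\Lambda$), and $U := \langle u_1,\dots,u_s\rangle \subset \unit K$ satisfies $\Q(U)=K$ by construction, so $X(K,U)$ will automatically be of simple type. Reading off the spectrum of each $A_k$ on $V\otimes_\Q\C$ yields the $s$ positive real eigenvalues $e^{t_{kj}}$ together with $t$ conjugate pairs $e^{\sum_j c_{ij}t_{kj}}, \overline{e^{\sum_j c_{ij}t_{kj}}}$; matching these with the Galois conjugates of $u_k$ shows that $K$ has signature $(s,t)$ and, after relabelling,
\[
\sigma_j(u_k) = e^{t_{kj}}\quad (1\le j\le s),\qquad
\sigma_{s+i}(u_k) = \exp\!\Bigl(\sum_{j=1}^s c_{ij}\, t_{kj}\Bigr)\quad (1\le i\le t).
\]
Since $\log\sigma_j(u_k)=t_{kj}$, this is precisely the formula of Theorem~1.2, so the matrix $C$ produced from $X(K,U)$ and the basis $(u_k)$ coincides with the original $C$. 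Invertibility of the $s\times s$ matrix $(t_{kj})$ (otherwise a nontrivial combination of the $a_k$'s would project to $0\in\R^s$ and hence lie in $\Lambda$) then gives that $U$ is free of rank $s$ with full‑rank logarithmic image.

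The main obstacle I anticipate is the remaining admissibility and lattice‑matching verification: one must confirm that $U$ meets the precise conditions of Definition~\ref{Def:admissible}, and that $\Lambda$ — a priori only known to be a $\Z[u_1,\dots,u_s]$‑stable $\Z$‑lattice in $V\cong K$ — is, up to commensurability, the $\OO K$‑lattice implicit in the OT construction, so that $G_C$ is truly \emph{associated} to $X(K,U)$ and not merely to some OT‑like Lie group with the same matrix $C$ but a different integral structure. Commensurability should follow because $\Z[u_1,\dots,u_s]$ is an order of full rank in $K$ (as $\Q(U)=K$) and any finitely generated full‑rank module over such an order is commensurable with $\OO K$; however, tracking the compatibility with the discreteness and cocompactness of $\Gamma$ through Definition~\ref{Def:admissible} is the delicate point, while everything else is rigidified by the eigenvalue computation above.
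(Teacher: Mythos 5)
Your proposal follows the same route as the paper's proof: split off the nilradical lattice, let the monodromy generate a commutative $\Q$-algebra $K$ acting on $\Lambda\otimes_\Z\Q$, use simplicity to force $K$ to be a field of degree $s+2t$ (your "simple faithful module over a commutative ring" argument is a slightly slicker packaging of the paper's reduced-Artinian argument), show the generators are units, and match $C$ with the data of $X(K,U)$. The one step that genuinely needs more care is the phrase "matching these with the Galois conjugates of $u_k$". Matching eigenvalue multisets element by element does not by itself produce a single labelling $\sigma_1,\dots,\sigma_{s+2t}$ of the embeddings of $K$ that works for \emph{all} $u_k$ simultaneously, and it degenerates when some $u_k$ has degree $<[K:\Q]$ (repeated eigenvalues). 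The correct mechanism is that all the $A_k$ are diagonalized in the \emph{same} coordinates of $\R^s\oplus\C^t$, so the coordinate projections of the resulting map $K\otimes_\Q\R\to\R^s\times\C^t$ are $\R$-algebra homomorphisms; since $[K:\Q]=s+2t$ this map is an isomorphism, its coordinates restrict to pairwise distinct embeddings of $K$ and exhaust them, which is what gives the signature $(s,t)$ and the identities $\sigma_j(u_k)=e^{t_{kj}}$, $\sigma_{s+i}(u_k)=\exp(\sum_j c_{ij}t_{kj})$ uniformly in $k$. This is exactly what the paper does via Minkowski theory and Lemma \ref{Lem:isomorphism of F-algebra}; you should also cite Proposition \ref{Prop:meta-abelian with a lattice admits splitting a lattice} (Mostow) for your opening assertion that $\Lambda$ is a lattice in $N$ and that the image of $\Gamma$ in $\R^s$ is a lattice.

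By contrast, the "main obstacle" you anticipate is not an obstacle at all. Being \emph{associated} to $X(K,U)$ in the sense of the paper depends only on the pair $(K,U)$: the associated group is $\R^s\ltimes_{\phi}(\R^s\oplus\C^t)$ for a homomorphism $\phi$ characterized by the commutativity of diagram (\ref{Equ:commutativity of phi}) in Lemma \ref{Lem:definition of phi}, and no comparison between $\Lambda$ and $\OO{K}$ (nor any statement that $\Gamma\backslash G\cong X(K,U)$) is required. Moreover, admissibility in the sense of Definition \ref{Def:admissible} is already contained in what you proved: $U\subset\unitp{K}$ because every real embedding takes the value $e^{t_{kj}}>0$ on $u_k$; $U$ is free of rank $s$ and $\pr_{\R^s}(\log U)$ is a lattice because the matrix $(t_{kj})$ is invertible. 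So once the embedding identification above is made precise, your argument closes, with no commensurability discussion needed; as written, however, that identification is the genuine gap.
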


Fortunately, 
it turns out that the assumption of lattice simplicity 
is not needed for LCK OT-like Lie groups
(Proposition \ref{Prop:LCK OT-like is simple}). 
So we have the following 

\begin{theorem}
[= Corollary \ref{Cor:LCK OT-like with lattice is OT}]
\label{Thm:Main2}
    Let $G$ be an LCK OT-like Lie group of type $(s,t)$. 
    If $G$ admits a lattice, 
    we have $t=1$. 
    Moreover, 
    $G$ is associated to some OT manifold $X(K,U)$ 
    of type $(s,1)$ and of simple type. 
\end{theorem}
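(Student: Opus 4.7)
The plan is to combine Proposition \ref{Prop:LCK OT-like is simple}, Theorem \ref{Thm:Main}, and the known LCK-existence dichotomy for OT manifolds already recalled in the introduction.

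Let $G$ be LCK OT-like of type $(s,t)$ and assume it admits a lattice $\Gamma$. The first step is Proposition \ref{Prop:LCK OT-like is simple}, which asserts that in the LCK setting the simplicity hypothesis on lattices is automatic; invoking it, we may assume $\Gamma$ is of simple type. Theorem \ref{Thm:Main} then applies directly and produces a number field $K$ together with an admissible subgroup $U \subset \unit{K}$ with $\Q(U) = K$, such that $G \simeq G_C$ is the OT-like group associated to $X(K,U)$, with a biholomorphism $\Gamma \backslash G \simeq X(K,U)$ coming from the given left-invariant complex structure on $G$.

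To force $t = 1$, I would transport the given left-invariant LCK structure on $G$ along this identification. Since a left-invariant LCK form on $G$ descends to any compact quotient, the compact complex manifold $X(K,U)$ itself carries an LCK metric. The classical characterization of LCK OT manifolds (\cite{OT05}, \cite{Vul14}, \cite{Dub14}, \cite{DV23}), which states that $X(K,U)$ admits an LCK metric if and only if its type is $(s,1)$, then forces $t = 1$. Both conclusions of the theorem — the value of $t$ and the association of $G$ with an OT manifold of type $(s,1)$ and of simple type — follow simultaneously.

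The only substantive obstacle, which is not part of the present statement, lies in Proposition \ref{Prop:LCK OT-like is simple}. The main difficulty there is to argue that the rigid LCK constraint $\Re c_{ij} = -1/(2t)$ already forces the lattice intersection $\Gamma \cap (\R^s \oplus \C^t)$ to be $\Q$-irreducible under the action of $\Gamma$; once that rigidity is secured, the deduction presented above is essentially formal.
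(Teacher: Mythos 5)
Your proposal is correct and follows essentially the same route as the paper: invoke Proposition \ref{Prop:LCK OT-like is simple} to get simplicity of the lattice for free, apply Theorem \ref{Thm:OT-like Lie group with simple lattice is constructed by OT} to realize $G$ as associated to an OT manifold $X(K,U)$ of simple type, and then use the descent of the left-invariant LCK structure to the compact quotient together with Theorem \ref{Thm:type of OT with LCK is (s,1)} to force $t=1$. The only cosmetic caveat is that the identification $X(K,U)\simeq \Gamma'\backslash G$ may use the modified lattice $\Gamma'$ from Proposition \ref{Prop:meta-abelian with a lattice admits splitting a lattice} rather than the original $\Gamma$, but as you note the left-invariant LCK metric descends to any compact quotient, so the conclusion is unaffected.
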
 

A priori, 
the relationships among OT-like Lie groups that 
admitting a lattice (of simple type), 
admitting a left-invariant LCK metric, 
and those associated to OT manifolds can be summarized as shown in the left diagram 
in Figure \ref{Fig:OT-like}. 
The meanings of the terms in Figure \ref{Fig:OT-like} are as follows: 
\begin{itemize}
    \item ``(LCK) OT-like'' denotes (LCK) OT-like Lie groups. 
    \item ``(simple) lattice'' denotes OT-like Lie groups admitting 
    a lattice (of simple type). 
    \item ``(simple) OT'' denotes Lie groups associated to some OT manifolds 
    (of simple type). 
    \item ``LCK OT'' denotes Lie groups associated to some OT manifolds 
    admitting an LCK metric. 
\end{itemize}
With this notation, 
Theorem \ref{Thm:Main} says that 
$[\text{simple OT}]=[\text{simple lattice}]$, 
and Theorem \ref{Thm:Main2} says that 
$[\text{LCK OT-like}] \cap [\text{lattice}] \subset [\text{simple OT}]$, 
which simplify the left diagram in Figure \ref{Fig:OT-like} into 
the right diagram. 

\begin{figure}[h]
\caption{}
\label{Fig:OT-like}
\centering
\begin{tikzpicture}[scale=1.2]

\draw[thick] (-6, -2) rectangle (-0.5, 2);
\node[anchor=south west] at (-4, 2) {\textbf{OT-like}};

\draw[thick] (-5.8, -1.8) rectangle (-2.5, 1.6);
\node[anchor=south west] at (-5.8, 1.6) {\scriptsize{\textbf{LCK OT-like}}};

\draw[thick] (-5.6, -1.6) rectangle (-0.7, 1.2);
\node[anchor=south west] at (-5.6, 1.2) {\scriptsize{\textbf{lattice}}};

\draw[very thick,draw=blue] (-4.3, -1.2) rectangle (-0.9, 0.8);
\node[anchor=south east,text=blue] at (-0.9, 0.8) {\scriptsize{\textbf{OT}}};

\draw[very thick,draw=green!50!black] (-5.4, -1.4) rectangle (-1.1, 0.3);
\node[anchor=south west,text=green!50!black] at (-5.4, 0.3) 
{\scriptsize \parbox{0.1cm} {\centering \textbf{simple\\lattice}}};

\draw[very thick,draw=orange!90!black] (-4, -1) rectangle (-1.2, -0.4);
\node[anchor=south east,text=orange!90!black] at (-1.2, -0.4) 
{\scriptsize \parbox{1cm} {\centering \textbf{simple\\OT}}};

\fill[red!80, opacity=0.2] (-4.3, -1.2) rectangle (-2.5, 0.8);
\node[text=red] at (-3.4,-0.1) {LCK OT};

%------------------------------------------------------------

\draw[thick] (6, -2) rectangle (0.5, 2);
\node[anchor=south west] at (2.5, 2) {\textbf{OT-like}};

\draw[thick] (0.7, -1.8) rectangle (3, 1.6);
\node[anchor=south west] at (0.7, 1.6) {\scriptsize{\textbf{LCK OT-like}}};

\draw[thick] (1.2, -1.2) rectangle (5.5, 0.8);
\node[anchor=south west] at (1.2, 0.8) {\textbf{lattice}};

\draw[very thick,draw=blue] (1.24, -1.16) rectangle (4.8, 0.76);
\node[anchor=south east,text=blue] at (4.8, 0.8) {\textbf{OT}};

\draw[very thick,draw=green!50!black] (1.28, -1.12) rectangle (4.1, 0.72);

\draw[very thick,draw=orange!90!black] (1.32, -1.08) rectangle (4.06, 0.68);

\node[text=green!50!black] at (3.5,-1.5) 
{\scriptsize \parbox{1cm} {\centering \textbf{simple\\lattice}}};

\node[text=orange!90!black] at (4.7,-1.5) 
{\scriptsize \parbox{1cm} {\centering \textbf{simple\\OT}}};

\node at (4.1,-1.5) {\scriptsize{\textbf{=}}};

\fill[red!80, opacity=0.2] (1.2, -1.2) rectangle (3, 0.8);

\node[text=red] at (2.1,-0.1) {LCK OT};

% 波線矢印
\draw[thick, ->] 
    (-0.3, 0) -- (0.3, 0) node[midway, above] {};

\end{tikzpicture}
\end{figure}

In this paper, 
we also study how LCK OT-like Lie algebras, 
Lie algebras associated to LCK OT-like Lie groups, 
relate to the 
classification of LCK Lie algebras. 
We call a Lie algebra $\fg$ \emph{meta-abelian} if $\fg$ is isomorphic to 
the semi-direct product of two abelian Lie algebras $\R^m \ltimes_{d\phi} \R^n$. 
It is known that meta-abelian Lie algebras admitting a Vaisman metric are 
isomorphic to $\R \times \fh_{2d+1}$, 
where $\fh_{2d+1}$ denotes the Heisenberg Lie algebra 
of dimension $2d+1$ \cite{Saw07}. 
For the non-Vaisman metrics, 
we show the following 

\begin{theorem}
[= Theorem \ref{Thm:classification of LCK meta-abelian}]
\label{Thm:Main3}
    Let $\fg=\R^m\ltimes_{d\phi}\R^n$ be a unimodular 
    meta-abelian Lie algebra and 
    $(\fg, J, \met{\cdot}{\cdot})$ be a non-Vaisman LCK structure. 
    If $\R^n+J\R^n=\fg$, 
    then $\fg$ is an LCK OT-like Lie algebra of type $(s,t)$, 
    where $m=s, n=s+2t$. 
\end{theorem}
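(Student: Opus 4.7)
The plan is to recognize $\fg$ as $\fg_C$ for some $C \in \Mat_{t\times s}(\C)$ with $\Re c_{ij} = -1/(2t)$. Set $\fb := \R^n$, $\fa := \R^m$, $s := m$, $2t := n - m$. The hypothesis $\fb + J\fb = \fg$ forces $\dim(\fb \cap J\fb) = 2t$, so $\fb_0 := \fb \cap J\fb$ is a $J$-invariant subspace of $\fb$ on which $\omega$ is non-degenerate (indeed $\omega(Y, JY) = g(Y,Y) > 0$). Pick a complement $\fb_1 \subset \fb$ of real dimension $s$.

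First I would translate $d\omega = \theta \wedge \omega$ into algebraic identities via the left-invariant Cartan formula
\[
d\beta(X,Y,Z) = -\beta([X,Y],Z) + \beta([X,Z],Y) - \beta([Y,Z],X),
\]
combined with $[\fb, \fb] = 0$ and $[X, Y] = d\phi(X)Y$ for $X \in \fa$, $Y \in \fb$. Feeding in $X, Y, Z \in \fb$ gives $(\theta \wedge \omega)|_{\fb} = 0$; the non-degeneracy of $\omega|_{\fb_0}$, together with the closedness of $\theta$ and $[\fg,\fg] \subset \fb$, then forces $\theta|_{\fb_0} = 0$. Feeding in $X \in \fa$ and $Y, Z \in \fb_0$ produces
\[
\omega(d\phi(X)Y, Z) + \omega(Y, d\phi(X)Z) = -\theta(X)\omega(Y,Z),
\]
i.e.\ $A + A^{*_\omega} = -\theta(X)\,I$ for $A := d\phi(X)|_{\fb_0}$ and the symplectic adjoint on $(\fb_0,\omega|_{\fb_0})$, once $d\phi(X)\fb_0 \subset \fb_0$ is established.

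Next I would combine the integrability of $J$ (vanishing Nijenhuis tensor) with these identities and $[\fb,\fb] = 0$ to prove that $d\phi(X)$ preserves $\fb_0$ and commutes with $J|_{\fb_0}$. Under the identification $(\fb_0, J|_{\fb_0}) \simeq \C^t$, the symplectic adjoint agrees with the Hermitian adjoint, so $A + A^{*_h} = -\theta(X)\,I$; then $A$ is automatically normal (as $A^* = -\theta(X) I - A$ commutes with $A$), and since $\fa$ is abelian the commuting family $\{d\phi(X)|_{\fb_0}\}_{X \in \fa}$ is simultaneously $\C$-diagonalizable. Choosing a $\C$-basis of $\fb_0$ along the common eigenlines and a dual basis $(e_j)$ of $\fa$ produces a matrix $C = (c_{ij}) \in \Mat_{t\times s}(\C)$ with $2\Re c_{ij} = -\theta(e_j)$ independent of $i$. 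A parallel analysis on $\fb_1$, exploiting the LCK identities with mixed arguments in $\fa$, $\fb_1$, $\fb_0$ together with $J\fa \simeq \fb_1 \bmod \fb_0$, shows $d\phi(e_j)|_{\fb_1}$ acts as the elementary diagonal $\mathrm{diag}(0,\ldots,1,\ldots,0)$, assembling an isomorphism $\fg \simeq \fg_C$.

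Finally, the unimodular condition $\tr d\phi(e_k) = 1 + 2\sum_i \Re c_{ik} = 0$, combined with the LCK-imposed uniformity $\Re c_{ik} = -\theta(e_k)/2$, yields $\theta(e_k) = 1/t$ and hence $\Re c_{ik} = -1/(2t)$, which is precisely the LCK OT-like condition. The main obstacle is the step that combines the LCK equation, Nijenhuis integrability, and abelianness of $\fa$ to obtain simultaneously both the $\fb_0$-invariance and the $\C$-linearity of $d\phi$; the non-Vaisman hypothesis enters by keeping $\theta$ nonzero, so that the eigenvalues $c_{ij}$ have genuinely non-trivial real parts and the resulting action of $\fa$ on $\fb_0$ has no kernel.
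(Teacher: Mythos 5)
There is a genuine gap at the very first structural step: your derivation of $\theta|_{\fb_0}=0$. From $[\fb,\fb]=0$ you correctly get $(\theta\w\omega)|_{\Lambda^3\fb}=0$, and when $t\ge 2$ a Lefschetz-type argument (if $\theta|_{\fb}\neq 0$ then $\omega|_{\fb}$ would have rank $\le 2$) does force $\theta|_{\fb}=0$; but for $t=1$ — which is exactly the case relevant to LCK OT manifolds — the implication you assert is false. The Kodaira--Thurston algebra $\R\times\fh_{2n+1}$ is a unimodular meta-abelian algebra $\R^n\ltimes\R^{n+2}$ with its standard LCK structure; it satisfies $\fn+J\fn=\fg$, $\omega|_{\fn\cap J\fn}$ is nondegenerate, $\theta$ is closed, $[\fg,\fg]\subset\fn$, and nevertheless $\theta|_{\fn\cap J\fn}\neq 0$. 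So no combination of the three facts you cite can yield $\theta|_{\fb_0}=0$ (let alone $\theta|_{\fn}=0$, which your later mixed identities on $\fb_1$ also need). The only thing that rules out this configuration is the non-Vaisman hypothesis, and not merely through ``$\theta\neq 0$'' as your last sentence suggests: the paper shows that $\theta|_{\fn}\neq 0$ forces $J$ to be abelian, and then invokes the classification of unimodular LCK Lie algebras with abelian complex structure (they are $\R\times\fh_{2d+1}$, hence Vaisman) to conclude $\theta|_{\fn}=0$. Some substitute for this non-trivial input is indispensable; your proposal never uses the non-Vaisman assumption at the point where it is actually needed.

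A second, quantitatively larger gap is the sentence ``A parallel analysis on $\fb_1$ \dots shows $d\phi(e_j)|_{\fb_1}$ acts as the elementary diagonal.'' Your treatment of $\fb_0$ (invariance and $J$-linearity of $d\phi(X)|_{\fb_0}$ via the Nijenhuis tensor and surjectivity of the projection of $J\fb_1$, then skew-Hermitian-plus-scalar and simultaneous unitary diagonalization) does match the paper. But on the complement nothing guarantees a priori that $d\phi(X)$ even preserves a complement of $\fb_0$ in $\fb$, let alone that the commuting family is diagonalizable over $\R$ in a common basis with $[u_i,v_j]=\delta_{ij}v_j$ and $\theta(u_i)=1/t$; the real spectrum could a priori be non-semisimple or have complex eigenvalues. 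This is where the bulk of the paper's work lies: proving that $\fk$ (your $\fb_1$, chosen as the orthogonal complement) is an ideal, that $J\fk$ is an abelian subalgebra, that $\ad_A|_{\fk}$ is self-adjoint with a single positive eigenvalue, that $\fu=J\fk$ is orthogonal to $\fk$, that the trace form $\tr(d\phi'(u_1)d\phi'(u_2))$ is positive definite, and that each $d\phi'(u)$ is self-adjoint for the transported form — only then does real simultaneous diagonalization and the normalization $\tr(d\phi(e_k)|_{\fb_1}$-block$)=1$ follow, which your final unimodularity computation presupposes. As written, the proposal compresses this chain into an unproven assertion, so the final identification $\Re c_{ij}=-1/(2t)$ is not yet established.
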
 

The assumption $\R^n+J\R^n=\fg$ is not needed 
when considering the case $m=1$ or $m=2$ 
(Lemma \ref{Lem:assumption is not needed when m=1,2}). 
By combining the results so far, 
we have the following 

\begin{theorem}
[= Theorem \ref{Thm:classification when m=1,2}]
    Let $G=\R^m\ltimes_{\phi}\R^n$ be a unimodular 
    meta-abelian Lie group with $m=1$ or $m=2$. 
    If $G$ admits a left-invariant non-Vaisman LCK metric  
    and a lattice, 
    we have $n=m+2$ and 
    $G$ is associated to some OT manifold of type $(m,1)$. 
\end{theorem}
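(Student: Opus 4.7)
The plan is to chain the three previous main theorems together. First I would pass from the group $G=\R^m\ltimes_\phi\R^n$ to its Lie algebra $\fg=\R^m\ltimes_{d\phi}\R^n$, noting that $\fg$ inherits unimodularity and meta-abelianness, and that the left-invariant non-Vaisman LCK metric on $G$ corresponds to a non-Vaisman LCK structure $(\fg,J,\met{\cdot}{\cdot})$ on $\fg$. Thus the hypotheses of Theorem \ref{Thm:Main3} are met except possibly for the side condition $\R^n+J\R^n=\fg$.

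The next step is to dispose of that side condition. This is exactly the content of Lemma \ref{Lem:assumption is not needed when m=1,2}, which asserts that $\R^n+J\R^n=\fg$ holds automatically in the low-dimensional cases $m=1$ and $m=2$. Once this is verified, Theorem \ref{Thm:Main3} applies and yields that $\fg$ is an LCK OT-like Lie algebra of type $(s,t)$ with $s=m$ and $n=s+2t=m+2t$. In particular, $G$ is an LCK OT-like Lie group of type $(m,t)$.

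Finally, since $G$ admits a lattice by hypothesis, I would invoke Theorem \ref{Thm:Main2}: it forces $t=1$ and furnishes an OT manifold $X(K,U)$ of type $(m,1)$ and of simple type to which $G$ is associated. With $t=1$, the dimensional identity $n=s+2t$ collapses to $n=m+2$, completing the claim.

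The main obstacle is not this final synthesis, which is essentially bookkeeping, but lies in the two ingredients it relies upon: Theorem \ref{Thm:Main3}, namely the structural classification of non-Vaisman LCK meta-abelian Lie algebras that satisfy $\R^n+J\R^n=\fg$, and Lemma \ref{Lem:assumption is not needed when m=1,2}, which removes that technical hypothesis in low dimensions. Once those two are available, together with Theorem \ref{Thm:Main2}, the present theorem follows almost formally.
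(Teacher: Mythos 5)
Your proposal is correct and follows essentially the same route as the paper: the paper likewise obtains Theorem \ref{Thm:classification when m=1,2} by combining Lemma \ref{Lem:assumption is not needed when m=1,2} (to remove the condition $\R^n+J\R^n=\fg$), Theorem \ref{Thm:classification of LCK meta-abelian} (to see $\fg$ is LCK OT-like of type $(m,t)$), and Corollary \ref{Cor:LCK OT-like with lattice is OT} (to force $t=1$ and the association to an OT manifold of type $(m,1)$, hence $n=m+2$). The only minor difference is that you do not explicitly mention Corollary \ref{Cor:classification when theta neq 0}, but that input is already built into the proofs of the two ingredients you cite.
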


\subsection{Strategy of the proof}

The proof of Theorem \ref{Thm:Main} is likely to be the most non-trivial, 
as it involves constructing a number-theoretic object 
from a Lie group and its lattice. 
Here, 
we briefly outline the idea of the proof of Theorem \ref{Thm:Main}. 

Let $G_C=\R^s \ltimes_{\phi_C} (\R^s \oplus \C^t)$ 
be an OT-like Lie group admitting a lattice of simple type. 
From Proposition \ref{Prop:meta-abelian with a lattice admits splitting a lattice}, 
whose proof essentially relies on Mostow's theorem \cite{Mos54}, 
we can show that $G_C$ admits a lattice of the form
$\Gamma=\Gamma_1 \ltimes_{\phi_C} (P^{-1} \Z^n)$, 
where $n=s+2t$, $P \in \GL(n,\R)$ and $\Gamma_1$ is a lattice in $\R^s$. 
The lattice may be different from the original one, 
but it is of simple type. 
$P$ satisfies the following condition: 
\[
P (\phi_C(x))P^{-1} \in \SL(n,\Z), \quad \text{for all} \;x \in \Gamma_1. 
\]
We define a subgroup of $\SL(n,\Z)$ as follows: 
\[
U \coloneqq \left\{
P (\phi_C(x))P^{-1} \in \SL(n,\Z)\, \mid \, 
x \in \Gamma_1 \right\}. 
\]
The group $U \subset \SL(n,\Z)$ is a free abelian subgroup of rank $s$. 
Then we can define a $\Q$-algebra 
$K \coloneqq \Q[U] \subset \Mat(n,\Q)$. 
By using the fact that the elements of $K$ are diagonalized by $P$ and 
the simplicity of the lattice, 
it is shown that $K$ is a field such that $[K : \Q]=n$ 
and that $U$ is a subgroup of $\unit{K}$. 
Eventually, 
we can show that $G_C$ is associated to an OT manifold $X(K,U)$. 

\subsection{Organization of this paper}
The paper is organized as follows: 
\begin{itemize}
\item In Section \ref{section:Preliminaries}, 
we review definitions and results on solvmanifolds and LCK manifolds.
\item In Section \ref{section:Oeljeklaus-Toma manifolds}, 
we review the construction of OT manifolds and their solvmanifold structures. 
We also summarize some results on LCK metrics of OT manifolds. 
\item In Section \ref{section:Construction of OT manifolds 
from Lie groups and their lattices}, 
we prove Theorem \ref{Thm:Main} and Theorem \ref{Thm:Main2}. 
\item In Section \ref{section:meta-abelian},  
we study LCK metrics on meta-abelian Lie algebras and 
prove Theorem \ref{Thm:Main3}. 
\item In Appendix \ref{Appendix}, 
we prove that an OT manifold admits an LCK metric if and only if 
it is of type $(s,1)$. 
We also present results on pluriclosed metrics. 
\end{itemize}

\addtocontents{toc}{\protect\setcounter{tocdepth}{0}}
\section*{Acknowledgments}
\addtocontents{toc}{\protect\setcounter{tocdepth}{1}}
%目次に入れないようにしてある
I would like to express my gratitude to my supervisor, 
Kengo Hirachi, for his enormous support and helpful advice. 
I am also grateful to Hisashi Kasuya for helpful discussions. 
I extend my sincere thanks to Junnosuke Koizumi, 
who provided the core idea for Theorem 
\ref{Thm:OT-like Lie group with simple lattice 
is constructed by OT} and theorems in Appendix \ref{Appendix}. 
I had valuable discussions about Theorem 
\ref{Thm:classification of LCK meta-abelian} with Yugo Takanashi, 
and I am thankful for his contributions. 
Lastly, 
I would like to express my appreciation to Hiroshi Sawai 
for proposing the challenging and inspiring problem of 
classifying meta-abelian LCK structures, 
which motivated this research. 
This research is supported by FoPM, WINGS Program, the University of Tokyo, 
and JSPS KAKENHI Grant number 24KJ0931. 

\section{Preliminaries}
\label{section:Preliminaries}

\subsection{Solvmanifolds and lattices}

Let $G$ be a connected solvable Lie group. 
We call its discrete subgroup $\Gamma \subset G$ a \emph{lattice} if 
the quotient $\Gamma \backslash G$ is compact. 
In this case, 
we call the quotient manifold $\Gamma \backslash G$ a  \emph{solvmanifold}. 
We say a solvable Lie algebra $\fg$ admits a lattice 
if the simply connected Lie group associated to $\fg$ admits a lattice. 

\begin{remark}
    For a general Lie groups $G$, 
    the term ``lattice'' is used to refer to a discrete subgroup $\Gamma$ 
    such that the volume of the quotient space $\Gamma \backslash G$ is finite, 
    relative to its Haar measure. 
    However, 
    when considering solvable Lie groups, 
    this property is equivalent to the compactness of $\Gamma \backslash G$ \cite{OV00}. 
\end{remark}

\begin{remark}
    Let $M$ be a compact homogeneous space of a connected solvable Lie group. 
    Then there exists a solvmanifold $\Gamma \backslash G$ which is a 
    finite covering of $M$ \cite{Aus73}\cite{OV90}.  
\end{remark}

The easiest solvable Lie groups are abelian Lie groups $\R^n$. 
As for their lattices, 
the following lemma is easy but important: 

\begin{lemma}
    A subgroup $\Gamma \subset \R^n$ is a lattice in $\R^n$ 
    if and only if 
    there exists an $\R$-basis 
    $\{ \mathbf{v}_i \}_{i=1}^n$ in $\R^n$  
    such that $\Gamma = \Z \mathbf{v}_1 + \cdots + \Z \mathbf{v}_n$. 
\end{lemma}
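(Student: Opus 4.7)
The plan is to prove both implications separately. For the \emph{if} direction, suppose $\Gamma=\Z\mathbf{v}_1+\cdots+\Z\mathbf{v}_n$ for an $\R$-basis $\{\mathbf{v}_i\}$. Writing a point of $\R^n$ in the coordinates given by this basis identifies $\R^n$ with $\R^n$ and $\Gamma$ with the standard lattice $\Z^n$, so $\Gamma$ is discrete and the quotient is the standard torus $(\R/\Z)^n$, which is compact. Thus $\Gamma$ is a lattice.

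For the \emph{only if} direction, let $\Gamma$ be a lattice. First I would check that $\Gamma$ spans $\R^n$ as an $\R$-vector space: if $V:=\R\cdot\Gamma\subsetneq\R^n$, then the orthogonal complement $V^{\perp}$ embeds into $\Gamma\backslash\R^n$ as a closed noncompact subset, contradicting compactness of the quotient. Next, I would build the desired basis inductively. Since $\Gamma$ is discrete, any bounded subset of $\R^n$ meets $\Gamma$ in a finite set, so minima of $\R$-valued continuous functions over $\Gamma$ (or over $\Gamma\setminus W$ for a proper subspace $W$) are attained.

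Starting with $W_0=\{0\}$, assume inductively that $\mathbf{v}_1,\ldots,\mathbf{v}_k\in\Gamma$ have been chosen, linearly independent, with $W_k:=\R\mathbf{v}_1+\cdots+\R\mathbf{v}_k$ satisfying $\Gamma\cap W_k=\Z\mathbf{v}_1+\cdots+\Z\mathbf{v}_k$. If $k<n$, then since $\Gamma$ spans $\R^n$ there exists $\gamma\in\Gamma\setminus W_k$; choose $\mathbf{v}_{k+1}\in\Gamma\setminus W_k$ minimizing the distance $d(\cdot,W_k)$, which exists by discreteness. I then have to verify the induction step $\Gamma\cap W_{k+1}=\Z\mathbf{v}_1+\cdots+\Z\mathbf{v}_{k+1}$. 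Given $\gamma\in\Gamma\cap W_{k+1}$, write $\gamma=a\mathbf{v}_{k+1}+w$ with $w\in W_k$ and $a\in\R$. Replacing $\gamma$ by $\gamma-\lfloor a\rfloor\mathbf{v}_{k+1}$ one may assume $0\le a<1$; if $a>0$, the element $\gamma$ would lie in $\Gamma\setminus W_k$ but have distance to $W_k$ strictly less than that of $\mathbf{v}_{k+1}$, contradicting minimality. Hence $a=0$ and $\gamma\in\Gamma\cap W_k=\Z\mathbf{v}_1+\cdots+\Z\mathbf{v}_k$ by the inductive hypothesis.

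After $n$ steps, $W_n=\R^n$ and $\Gamma=\Z\mathbf{v}_1+\cdots+\Z\mathbf{v}_n$. The main obstacle is the inductive step: ensuring that $\mathbf{v}_{k+1}$ not only extends the $\R$-linear independence but also generates $\Gamma\cap W_{k+1}$ together with the previously chosen vectors, which is precisely what the minimum-distance choice buys us via the contradiction argument above.
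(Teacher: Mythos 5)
The paper states this lemma without proof (it is quoted as a classical fact), so there is no argument of the paper to compare with; your strategy is the standard one, and most of it is sound: the ``if'' direction is immediate, the spanning argument via the isometrically embedded, unbounded copy of $V^{\perp}$ in the quotient is correct, and the inductive step itself (reducing the coefficient $a$ modulo $1$ and contradicting minimality of $d(\mathbf{v}_{k+1},W_k)$) works, because $d(a\mathbf{v}_{k+1}+w,W_k)=\abs{a}\,d(\mathbf{v}_{k+1},W_k)$ for $w\in W_k$.

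The one genuine gap is the existence of the minimizer $\mathbf{v}_{k+1}$. Your justification --- bounded sets meet $\Gamma$ in finite sets, hence minima of continuous functions over $\Gamma\setminus W$ are attained --- does not work as stated: the sublevel sets of $d(\cdot,W_k)$ are unbounded slabs around $W_k$, not bounded sets, and the general claim is in fact false. For example, with $\Gamma=\Z^2$ and $W=\R\,(1,\sqrt{2})$, the infimum of $d(\cdot,W)$ over $\Gamma\setminus W=\Z^2\setminus\{0\}$ is $0$ and is not attained. What rescues the step is precisely your inductive hypothesis that $\Gamma\cap W_k=\Z\mathbf{v}_1+\cdots+\Z\mathbf{v}_k$ is a full lattice in $W_k$: given one element $\gamma_0\in\Gamma\setminus W_k$, any competitor $\gamma$ may be translated by an element of $\Z\mathbf{v}_1+\cdots+\Z\mathbf{v}_k$ --- which changes neither membership in $\Gamma\setminus W_k$ nor the value $d(\gamma,W_k)$ --- so that its orthogonal projection to $W_k$ lies in the compact parallelotope $\{t_1\mathbf{v}_1+\cdots+t_k\mathbf{v}_k \mid 0\le t_i\le 1\}$. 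The competitors with $d(\gamma,W_k)\le d(\gamma_0,W_k)$ may therefore be assumed to lie in a bounded region, hence form a finite nonempty subset of $\Gamma$ by discreteness, and the minimum is attained (it is automatically positive, since a point at distance $0$ from $W_k$ lies in $W_k$). With this insertion, your proof is complete.
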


In general, 
it is difficult to check whether a solvable Lie group admits 
a lattice or not. 
The fundamental necessary condition is the following

\begin{lemma}\label{Lem:admitting lattice then unimodular}
    Let $G$ be a connected solvable Lie group. 
    If $G$ admits a lattice, 
    then $G$ is unimodular, 
    that is, 
    for all $x \in G$ we have 
    \[
    \det (\Ad_x) = 1. 
    \]
    Equivalently, 
    $\mathfrak{g}$ is unimodular, 
    that is, 
    for all $X \in \mathfrak{g}$ we have 
    \[
    \tr(\ad_X)=0. 
    \]
\end{lemma}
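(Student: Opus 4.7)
The plan is to use the modular function of $G$, namely $\Delta_G\colon G\to\R_{>0}$, $\Delta_G(g)=\det(\Ad_g)$. This is a continuous group homomorphism (positivity of the determinant uses connectedness of $G$, so that the image of $\det\circ\Ad$ is a connected subgroup of $\R^\times$), and $G$ is by definition unimodular exactly when $\Delta_G\equiv 1$. With a standard sign convention, a left-invariant Haar measure $\mu$ on $G$ transforms under right translation as $R_g^{*}\mu=\Delta_G(g)^{-1}\mu$. The key idea is that the lattice $\Gamma$ supplies a compact $G$-space on which right translation is simultaneously measure-scaling by $\Delta_G(g)^{-1}$ and volume-preserving, which forces $\Delta_G(g)=1$.

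To make this precise, I would first descend $\mu$ to a Borel measure $\bar\mu$ on $\Gamma\backslash G$, using that $\mu$ is in particular $\Gamma$-left-invariant; concretely, $\bar\mu$ is characterized by
\[
\int_G f\,d\mu=\int_{\Gamma\backslash G}\Bigl(\sum_{\gamma\in\Gamma}f(\gamma g)\Bigr)\,d\bar\mu(\Gamma g)
\]
for compactly supported continuous $f$. Two observations then do the work: compactness of $\Gamma\backslash G$ yields $0<\bar\mu(\Gamma\backslash G)<\infty$, while the projection $G\to\Gamma\backslash G$ is a local diffeomorphism intertwining the right $G$-actions, so the scaling law of $\mu$ passes to $\bar\mu$. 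Right multiplication by any $g\in G$ is then a diffeomorphism of $\Gamma\backslash G$ with itself and therefore preserves the total volume; comparing this with $\bar\mu(\bar E\cdot g)=\Delta_G(g)^{-1}\bar\mu(\bar E)$ applied to $\bar E=\Gamma\backslash G$ forces $\Delta_G(g)=1$ for every $g$.

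Finally, I would deduce the Lie-algebra formulation from the identities $\Ad_{\exp X}=\exp(\ad_X)$ and $\det\exp A=\exp\tr A$, which together give $\det\Ad_{\exp X}=\exp\tr\ad_X$; this yields the equivalence in a neighborhood of the identity, and connectedness of $G$ extends it to all of $G$. I do not expect any genuine obstacle here: the substance of the argument is the one-line contradiction between bijectivity of right translation on $\Gamma\backslash G$ and nontrivial scaling of the finite positive volume, and the only delicate bookkeeping is checking that the descended measure transforms by the same modular factor as $\mu$.
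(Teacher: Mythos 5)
Your proposal is correct: it is the classical argument that a group containing a lattice is unimodular — descend the left Haar measure to the compact quotient $\Gamma\backslash G$, note that right translation both preserves and scales the finite positive total volume by the modular factor $\det(\Ad_g)^{\pm1}$ (positive by connectedness), hence $\det(\Ad_g)=1$, with the Lie algebra form following from $\det\Ad_{\exp X}=e^{\tr\ad_X}$ and connectedness. The paper states this lemma as a standard fact and gives no proof, so there is nothing to compare beyond noting that your route is exactly the textbook one.
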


In this article we focus on a certain family of solvable Lie groups, 
namely, 
the meta-abelian ones. 
We call a connected and simply connected Lie group $G$ \emph{meta-abelian} if 
$G$ is isomorphic to the semi-direct product of two abelian Lie groups. 
More precisely, 
such Lie groups are expressed as follows. 
Let $\R^m$ and $\R^n$ be abelian Lie groups and 
$\phi \colon \R^m \to \GL(\R^n)$ be a Lie group homomorphism. 
Then the Lie group $\R^m \ltimes_{\phi} \R^n$ is a manifold $\R^m \times \R^n$ 
equipped with the following product: 
\[
(x_1,y_1) \cdot (x_2,y_2) \coloneqq (x_1+x_2, y_1 + \phi(x_1)y_2). 
\]
Its Lie algebra, 
which is also called meta-abelian, 
is a semi-direct product of two abelian Lie algebras 
$\R^m \ltimes_{d\phi} \R^n$, 
where $d \phi \colon \R^m \to \End(\R^n)$ is the derivative of $\phi$. 
Its bracket is expressed as follows: 
\[
[(X_1,Y_1),(X_2,Y_2)] = (0,d\phi(X_1)Y_2 - d\phi(X_2)Y_1). 
\]

\begin{example}\label{Ex:Sol3}
    Let $G=\R \ltimes_{\phi} \R^2$ be a $3$-dimensional Lie group, 
    where the map $\phi \colon \R \to \GL(\R^2)$ is defined as follows: 
    \[
    \phi(x) \coloneqq 
    \begin{pmatrix}
        e^x & 0 \\
        0 & e^{-x} 
    \end{pmatrix}. 
    \]
    The Lie algebra $\mathfrak{g}$ of $G$ is $\R \ltimes_{d\phi}\R^2$ 
    ,where the map $d\phi \colon \R \to \End(\R^2)$ is as follows: 
    \[
    d\phi(X) \coloneqq 
    \begin{pmatrix}
        X & 0 \\
        0 & -X 
    \end{pmatrix}. 
    \]
    Notice that $G$ is unimodular. 
    We shall construct a lattice in $G$
    (cf. \cite{SY05}, \cite{Saw07lattice}). 
    Let $M \in \SL(2,\Z)$ be a matrix whose eigenvalues are $\lambda$ and $\lambda^{-1}$ such that 
    $0<\lambda^{-1}<\lambda$. 
    Let $P \in \GL(2,\R)$ be its diagonalization, 
    that is, 
    $P^{-1}MP=\text{diag}(\lambda,\lambda^{-1})$. 
    Then we obtain a lattice $\Gamma$ in G as follows: 
    \[
    \Gamma \coloneqq ((\log\lambda) \Z) \ltimes_{\phi} (P^{-1}\Z^2).  
    \]
\end{example}

For a certain class of meta-abelian Lie groups, 
we may obtain an explicit description of lattices as in the above example. 
We call the largest connected nilpotent normal subgroup in $G$ the \emph{nilradical} and 
its Lie algebra the \emph{nilradical} of $\mathfrak{g}$. 
The nilradical of $\mathfrak{g}$ corresponds to the largest nilpotent ideal of $\mathfrak{g}$. 
We consider 
the case where the nilradical of a meta-abelian Lie group $G=\R^m \ltimes_{\phi} \R^n$ 
coincides with $\R^n$. 
Equivalently, 
the nilradical of its Lie algebra $\R^m \ltimes_{d\phi} \R^n$ 
coincides with $\R^n$. 
In this case, 
if $G$ admits a lattice, 
we can construct a lattice in $G$ using the method as in Example \ref{Ex:Sol3} (cf. \cite[Lemma 2.3]{TY01})
as shown in the following

\begin{proposition}
\label{Prop:meta-abelian with a lattice admits splitting a lattice}

    Let $G=\R^m \ltimes_{\phi} \R^n$ be a meta-abelian Lie group. 
    If $G$ admits a lattice $\Gamma$ and the nilradical of $G$ coincides with $\R^n$, 
    then there exists a lattice $\Gamma_1$ in $\R^m$ and a matrix $P \in \GL(n,\R)$ 
    such that for all $x \in \Gamma_1$ we have
    \[
    P \phi(x)P^{-1} \in \SL(n,\Z). 
    \]
    In this case, 
    $\Gamma' \coloneqq \Gamma_1 \ltimes_{\phi} (P^{-1}\Z^n)$ 
    is a lattice in $G$. 
\end{proposition}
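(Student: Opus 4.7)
The plan is to decompose $\Gamma$ via Mostow's theorem applied to the nilradical fibration, to extract the matrix $P$ from a lattice inside the abelian nilradical, and to read off the integrality condition on $\phi$ from the conjugation action of $\Gamma$ on that nilradical.

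Write $N := \{0\} \times \R^n$, which by hypothesis is the nilradical of $G$, and let $\pi \colon G \to G/N \cong \R^m$ be the projection. First I will invoke Mostow's theorem \cite{Mos54}: since $G$ is simply connected solvable and $\Gamma$ is a lattice, this gives that $\Lambda := \Gamma \cap N$ is a lattice in $N \cong \R^n$ and that $\Gamma_1 := \pi(\Gamma)$ is a lattice in $\R^m$. Because $N$ is abelian, the earlier lemma classifying lattices in $\R^n$ then produces $P \in \GL(n,\R)$ with $\Lambda = P^{-1}\Z^n$; these $\Gamma_1$ and $P$ will be the objects required by the proposition.

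Next I will use the normality of $N$ in $G$ to extract the integrality statement. A short direct computation in $\R^m \ltimes_\phi \R^n$ yields $(x,y)(0,z)(x,y)^{-1} = (0,\phi(x)z)$, so the conjugation action of $(x,y) \in \Gamma$ on $N$ depends only on $x = \pi(x,y)$ and acts as $\phi(x)$. Since this action must preserve $\Lambda = P^{-1}\Z^n$ for every $x \in \Gamma_1$, the matrix $P\phi(x)P^{-1}$ lies in $\GL(n,\Z)$, and the unimodularity of $G$ supplied by Lemma \ref{Lem:admitting lattice then unimodular} forces $\det\phi(x) = 1$, upgrading this inclusion to $\SL(n,\Z)$.

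For the final assertion, the set $\Gamma' := \Gamma_1 \ltimes_\phi (P^{-1}\Z^n)$ is closed under the semidirect-product multiplication by the preservation property just established, is discrete since both factors are, and is cocompact via the torus fibration $\Gamma' \backslash G \to \Gamma_1 \backslash \R^m$ with fiber $(P^{-1}\Z^n) \backslash \R^n$. The delicate point of the whole argument is the invocation of Mostow's theorem, which simultaneously supplies both the existence of $P$ (through $\Lambda$) and the discreteness and cocompactness of $\Gamma_1$; once Mostow is in hand, the remaining steps reduce to routine manipulations inside the semidirect-product structure.
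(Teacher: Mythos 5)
Your proposal is correct and follows essentially the same route as the paper: Mostow's theorem for $\Gamma\cap\R^n$, the choice of $P$ from a $\Z$-basis of that lattice, the conjugation identity $(x,y)(0,z)(x,y)^{-1}=(0,\phi(x)z)$ to get $P\phi(x)P^{-1}\in\GL(n,\Z)$, unimodularity to upgrade to $\SL(n,\Z)$, and the evident discreteness/cocompactness of $\Gamma'$. The only minor divergence is that you cite the full Mostow package to conclude that $\Gamma_1=\pi(\Gamma)$ is a lattice in $\R^m$, whereas the paper verifies this by hand with an elementary subsequence argument for discreteness and the continuous surjection $\Gamma\backslash G\to\Gamma_1\backslash\R^m$ for cocompactness; both are legitimate.
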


\begin{proof}
    By Mostow's theorem \cite{Mos54} \cite{Rag72} \cite{Aus73} , 
    the intersection of the lattice $\Gamma$ and the nilradical 
    is a lattice in the nilradical. 
    Thus $\Gamma_0 \coloneqq \Gamma \cap \R^n$ is a lattice in $\R^n$. 
    We take a $\Z$-basis $\{ \textbf{v}_i \}_{i=1}^n$ of 
    the lattice $\Gamma_0$ in $\R^n$ and 
    a matrix $P \in \GL(n,\R)$ so that $P \mathbf{v}_i=\mathbf{e}_i$ 
    for all $i$, 
    where $\mathbf{e}_i$ denote standard basis vectors. 

    We define a subgroup $\Gamma_1 \subset \R^m$ as follows:  
    \[
    \Gamma_1 \coloneqq \{ x \in \R^m \mid 
    \exists y \in \R^n \text{ s.t. } (x,y) \in \Gamma \}. 
    \]
    We will show that $\Gamma_1$ is a lattice in $\R^m$. 
    Assume by contradiction that there exists a point $x_{\infty} \in \Gamma_1$ and 
    a sequence $\{ x_i \}_{i=1}^{\infty} \subset \Gamma_1$ such that 
    $x_i \to x_{\infty}$ $(i \to \infty)$ 
    and $x_i \ne x_{\infty}$ for all $i$. 
    Since $\Gamma_0$ is a lattice in $\R^n$, 
    there is a compact set $K \subset \R^n$ such that for all $x_i \in \Gamma_1$ 
    we can take $y_i \in K$ so that $(x_i,y_i) \in \Gamma$. 
    As $K$ is compact, 
    there is a subsequence $\{ y_{s(i)} \}_{i=1}^{\infty}$ such that 
    $y_{s(i)}$ converges to some point in $\R^n$. 
    Therefore the sequence $(x_{s(i)},y_{s(i)})$ converges to some point in $G$. 
    This contradicts the discreteness of $\Gamma$. 
    There is a natural continuous surjection  
    $\Gamma \backslash G \to \Gamma_1 \backslash \R^m$. 
    Since $\Gamma \backslash G$ is compact, 
    so is $\Gamma_1 \backslash \R^m$. 
    Therefore, $\Gamma_1$ is a lattice in $\R^m$. 

    For all $x \in \Gamma_1$ and $y \in \Gamma_0$, 
    take $y' \in \R^n$ so that $(x,y') \in \Gamma$. 
    Since we have $(x,y') \cdot (0,y) \cdot (x,y')^{-1}=(0,\phi(x)y) 
    \in \Gamma_0$, 
    it follows that $\phi|_{\Gamma_1}$ acts on $\Gamma_0$. 
    By the definition of $P$, 
    $\Gamma' \coloneqq \Gamma_1 \ltimes_{\phi} (P^{-1}\Z^n)$ 
    is a discrete subgroup in $G$. 
    Moreover, 
    we have $P \phi(x)P^{-1} \in \GL(n,\Z)$ for all $x \in \Gamma_1$. 
    In fact, 
    we have 
    $P \phi(x)P^{-1} \in \SL(n,\Z)$ by the unimodularity. 
    It is easy to see that $\Gamma' \backslash G$ is compact.
\end{proof}

Lastly, 
we define lattices of simple type. 
Let $G=\R^m \ltimes_{\phi} \R^n$ be a meta-abelian Lie group 
whose nilradical coincides with $\R^n$. 
Define a lattice $\Gamma_0 \coloneqq \Gamma \cap \R^n$ 
as in the above proof. 
The adjoint representation 
$\Ad|_{\Gamma} \colon \Gamma \to \Aut(\Gamma_0)$
defines a representation of $\Gamma$ on the $\Q$-vector space 
$\Gamma_0 \otimes_{\Z} \Q$. 

\begin{definition}\label{Def:lattice of simple type}
    Let $G=\R^m \ltimes_{\phi} \R^n$ be a meta-abelian Lie group 
    whose nilradical coincides with $\R^n$. 
    We call a lattice $\Gamma$ in $G$ \emph{of simple type} if 
    the representation of $\Gamma$ on the $\Q$-vector space 
    $\Gamma_0 \otimes_{\Z} \Q$ is irreducible. 
\end{definition}

It is obvious that if a lattice $\Gamma$ is of simple type, 
then the lattice $\Gamma'$ in Proposition 
\ref{Prop:meta-abelian with a lattice admits splitting a lattice} is 
also of simple type. 

\subsection{Locally conformally K\"{a}hler structures on 
manifolds}

Let $(M,J)$ be a complex manifold with a Hermitian metric $g$. 
We consider the fundamental form 
$\omega(\cdot \, , \cdot) \coloneqq g(J\cdot \, ,\cdot)$. 
The metric $g$ is said to be 
\emph{conformally K\"{a}hler} (CK, for short) on $M$ 
if there exists a real-valued function $f$ 
such that $e^{-f} g$ is K\"{a}hler on $M$, 
i.e. $d(e^{-f} \omega)=0$. 
We call the metric $g$ LCK  
if there exists an open covering $\{U_i\}_{i \in I}$ of $M$ 
such that $g|_{U_i}$ are CK on $U_i$ for all $i \in I$. 
For an LCK metric $g$, 
take a real-valued function $f_i$ on $U_i$ 
so that $e^{-f_i}g$ is K\"{a}hler on $U_i$. 
Then, the condition that the form $e^{-f_i}g$ is K\"{a}hler is equivalent to 
$d\omega=df_i \w \omega$. 
Since $\omega$ is nondegenerate, 
$df_i \w \omega=df_j \w \omega$ implies 
$df_i=df_j$ on $U_i \cap U_j$. 
So the $1$-forms $\{df_i\}_{i \in I}$ are glued together. 
Thus we obtain the following proposition 
which makes the above definition simpler: 
\begin{proposition}
    A Hermitian metric $g$ on a complex manifold $(M,J)$ 
    is LCK if and only if 
    there exists a closed $1$-form $\theta$ on $M$ 
    such that $d\omega = \theta \w \omega$, 
    where $\omega$ is the fundamental form. 
    Furthermore, 
    the metric $g$ is CK if and only if $\theta$ is exact. 
\end{proposition}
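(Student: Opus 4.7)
The plan is to prove both directions directly, essentially formalizing the sketch already indicated in the paragraph preceding the statement.

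For the forward direction, suppose $g$ is LCK with open cover $\{U_i\}$ and functions $f_i \in C^\infty(U_i)$ so that $e^{-f_i} g$ is Kähler on $U_i$. I would compute $0 = d(e^{-f_i}\omega) = e^{-f_i}(d\omega - df_i \wedge \omega)$, which gives the local identity $d\omega = df_i \wedge \omega$ on each $U_i$. On overlaps $U_i \cap U_j$ this yields $(df_i - df_j)\wedge \omega = 0$. The key point is that the operator $\alpha \mapsto \alpha \wedge \omega$ is pointwise injective on $1$-forms: when $\dim_\C M = n \geq 2$, this follows from the Lefschetz property for the nondegenerate real $(1,1)$-form $\omega$ (since $\alpha \wedge \omega = 0$ forces $\alpha \wedge \omega^{n-1}=0$, and the map $\Lambda^1 \to \Lambda^{2n-1}$ given by wedging with $\omega^{n-1}$ is an isomorphism); when $n=1$, $d\omega = 0$ for trivial degree reasons and any Hermitian metric is Kähler, so the proposition is vacuous. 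Hence $df_i = df_j$ on the overlaps, so the locally exact $1$-forms glue to a globally defined closed $1$-form $\theta$ on $M$ with $\theta|_{U_i} = df_i$, which satisfies $d\omega = \theta \wedge \omega$ by construction.

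For the reverse direction, suppose $\theta$ is a closed $1$-form with $d\omega = \theta \wedge \omega$. I would cover $M$ by contractible open sets $\{U_i\}$ and apply the Poincaré lemma on each $U_i$ to obtain $f_i \in C^\infty(U_i)$ with $df_i = \theta|_{U_i}$. Then
\[
d(e^{-f_i}\omega) = e^{-f_i}(d\omega - df_i \wedge \omega) = e^{-f_i}(d\omega - \theta \wedge \omega) = 0,
\]
so $e^{-f_i} g$ is Kähler on $U_i$, showing that $g$ is LCK.

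For the CK addendum, the same computation works globally: if $g$ is CK with $e^{-f}g$ Kähler, then $d\omega = df \wedge \omega$ identifies $\theta$ with $df$, which is exact; conversely, if $\theta = df$ for some global $f$, then $d(e^{-f}\omega) = e^{-f}(d\omega - \theta \wedge \omega) = 0$ gives a global Kähler representative. The only subtle step in the whole argument is the pointwise injectivity of wedging with $\omega$ used in the gluing, which I would explicitly flag and justify via the nondegeneracy of $\omega$; everything else is routine exterior-algebra manipulation together with the Poincaré lemma.
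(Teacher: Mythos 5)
Your proposal is correct and follows essentially the same route as the paper: the forward direction glues the local $df_i$ using the pointwise injectivity of $\alpha \mapsto \alpha \wedge \omega$ (which the paper attributes to nondegeneracy of $\omega$ and you justify via the linear Lefschetz isomorphism), and the converse plus the CK statement follow from the Poincar\'e lemma and the identity $d(e^{-f}\omega)=e^{-f}(d\omega-df\wedge\omega)$. Your explicit treatment of the $\dim_{\C}M=1$ case and of the injectivity step is a harmless elaboration of what the paper leaves implicit.
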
 

The $1$-form $\theta$ is determined by the metric $g$, 
and we call it the \emph{Lee form} of $g$. 
In this situation we call the Hermitian manifold 
$(M,J,g)$ an \emph{LCK manifold} with the Lee form $\theta$. 
Typical examples of LCK manifolds are Hopf manifolds. 
The construction of the LCK metrics shown in the following example 
uses the same method as the construction of the LCK metrics on 
Oeljeklaus-Toma manifolds, 
which will be presented later in Proposition
\ref{Prop:OT of type (s,1) admits LCK}. 

\begin{example}[Hopf manifolds]
    Let $\lambda \in \C$ be a complex constant such that $\abs{\lambda}>1$ 
    and $\Z$ be a cyclic group generated by $\tau_{\lambda}$. 
    We define an action of $\Z$ on a complex manifold 
    $X \coloneqq \C^n \backslash \{ 0 \}$ 
    by $\tau_{\lambda} \cdot z=\lambda z$. We call the quotient manifold 
    $X/\Z$ a \emph{Hopf manifold}. 
    Let $g$ be a K\"{a}hler metric on $X$ 
    defined as the restriction of the standard Hermitian metric on $\C^n$. 
    Define a function 
    $\Psi(z) \coloneqq \abs{z}^{-2} \colon X \to \R_{>0}$. 
    It is easy to check that $\Psi g$ is $\Z$-invariant, 
    and defines a metric on $X/\Z$. 
    By the construction, 
    this metric on $X/\Z$ is LCK. 
\end{example}

There is a special class of LCK manifolds known as Vaisman manifolds \cite{Vai82}. 
We call an LCK manifold $(M,J,g)$ \emph{Vaisman} if 
its Lee form $\theta$ is parallel with respect to 
the Levi-Civita connection $\nabla$, 
that is, 
$\nabla \theta=0$ holds. 
It is known that the 
Hopf manifolds with LCK metrics constructed in the above example is Vaisman. 

\subsection{Locally conformally K\"{a}hler structures on 
Lie algebras} 

Let $G$ be a connected Lie group and $\mathfrak{g}$ be 
the associated Lie algebra. 
We define LCK structures on $\mathfrak{g}$ as the structures 
corresponding to left-invariant LCK structures on $G$. 

Let $J \in \End(\mathfrak{g})$ be a complex structure on $\mathfrak{g}$, 
that is, 
$J^2=-\id_{\fg}$ holds. 
We call the complex structure $J$ \emph{integrable} if 
\emph{the Nijenhuis tensor} 
$N_J \colon \mathfrak{g} \otimes \mathfrak{g} \to \mathfrak{g}$ 
defined by 
\[
N_J(X,Y) \coloneqq [X,Y]-[JX,JY]+J[X,JY]+J[JX,Y]
\]
is zero. 
Let $\met{\cdot}{\cdot}$ be an inner product on $\mathfrak{g}$ such that 
$\met{J\cdot}{J\cdot}=\met{\cdot}{\cdot}$ holds. 
We define the fundamental form 
$\omega(\cdot \, , \cdot) \coloneqq \met{J\cdot}{\cdot} \in \bigwedge^2 \mathfrak{g}^*$. 
We call the pair $(\mathfrak{g},J,\met{\cdot}{\cdot})$ LCK 
with the Lee form $\theta \in \mathfrak{g}^*$ if 
$J$ is integrable, 
$d\theta=0$ and $d\omega=\theta\w \omega$ holds, 
where $d$ is the exterior derivative of 
\emph{the Chevalley-Eilenberg complex} $(\bigwedge^{\bullet}\mathfrak{g}^*,d)$. 
We note that 
$d\theta=0$ is equivalent to $\theta|_{[\mathfrak{g},\mathfrak{g}]}=0$ and 
$d\omega=\theta \w \omega$ is equivalent to 
\[
-\omega([X,Y],Z)-\omega([Y,Z],X)-\omega([Z,X],Y)=
\theta(X)\omega(Y,Z)+\theta(Y)\omega(Z,X)+\theta(Z)\omega(X,Y)  
\]
for all $X,Y,Z\in \mathfrak{g}$. 
We call an LCK Lie algebra $(\mathfrak{g},J,\met{\cdot}{\cdot})$ with 
the Lee form $\theta$ \emph{Vaisman} if 
$\ad_{A} \in \End{\mathfrak{g}}$ is skew-symmetric with respect to 
$\met{\cdot}{\cdot}$, 
where $A \in \mathfrak{g}$ is the \emph{Lee vector} determined by 
the relation $\theta(\cdot)=\met{A}{\cdot}$. 

Left-invariant LCK (Vaisman) structures on a Lie group correspond to 
LCK (Vaisman) structures on its Lie algebra. 
Let $G$ be a connected solvable Lie group 
and $\Gamma$ be a lattice in $G$. 
A left-invariant LCK structure on $G$ naturally defines an LCK structure 
on $\Gamma \backslash G$. 
The LCK structure on $\Gamma \backslash G$ obtained in this way 
is also called left-invariant. 

\begin{example}
[\cite{Thu76}, \cite{CFdL86}]
\label{Ex:Heisenberg}
    Let $\mathfrak{g}=\R \times \mathfrak{h}_{2n+1}$ be 
    a nilpotent Lie algebra, 
    where $\mathfrak{h}_{2n+1}$ is the $(2n+1)$-dimensional 
    Heisenberg Lie algebra. 
    There is a basis $\{x_1,y_1,\ldots,x_n,y_n,z_1,z_2\}$ of 
    $\mathfrak{g}$ such that the Lie bracket is given by the relations 
    $[x_i,y_i]=z_1$ for all $1 \le i \le n$. 
    For the dual basis, 
    the indices are written as superscripts, 
    such as $x^i$. 
    We define a metric $\met{\cdot}{\cdot}_0$ on $\mathfrak{g}$ such that 
    the basis above is orthonormal. 
    Let $J_0$ be the complex structure given by 
    $J_0x_i=y_i$ for all $1\le i \le n$ and $J_0z_1=z_2$. 
    Then $J_0$ is integrable and 
    the fundamental form is 
    \[
    \omega_0 =\sum_{i=1}^n x^i \w y^i +z^1 \w z^2 
    \]
    which satisfies $d\omega_0=\theta_0 \w \omega_0$, 
    where $\theta_0=-z^2$. 
    The Lee vector is $A_0=-z_2$. 
    Since the adjoint representation $\ad_{A_0}$ is trivial, 
    it is in particular skew-symmetric. 
    Hence $(\R \times \mathfrak{h}_{2n+1}, J_0,\met{\cdot}{\cdot}_0)$ is 
    a Vaisman Lie algebra. 
    This Lie algebra admits a lattice. 
    We call the associated nilmanifold the \emph{Kodaira-Thurston} manifold, 
    which is the first example of 
    a compact complex manifold which admits a symplectic structure 
    but cannot admit any K\"{a}hler structures. 
\end{example}

Lastly, 
we note that a metric obtained by replacing an LCK metric by 
its scalar multiple is also LCK. 
Thus, 
when discussing the uniqueness of the metric, 
we always express it up to scalar multiple. 

\section{Oeljeklaus-Toma manifolds}
\label{section:Oeljeklaus-Toma manifolds}

Oeljeklaus-Toma (OT, for short) manifolds,  
first defined in \cite{OT05}, 
are higher dimensional generalizations of Inoue-Bombieri surfaces \cite{Ino74}. 
In this section, 
we carefully review their definition and properties 
since they are essential in this paper. 

\subsection{Construction}

Let $K$ be a number field of degree $n = [ K : \Q]$. 
We suppose that $K$ admits $s$ real embeddings 
$\sigma_1, \ldots ,\sigma_s : K \hookrightarrow \R$, 
and $2t$ complex embeddings 
$\sigma_{s+1}, \ldots ,\sigma_{s+2t} : K \hookrightarrow \C$. 
Since the complex embeddings come in conjugate pairs, 
we can label the indices so that $\sigma_{s+i} = \conj{\sigma_{s+t+i}}$ 
for all $1 \le i \le t$. 
We always consider only the case where $s,t \ge 1$. 

Let $\OO{K}$ denote the ring of algebraic integers of $K$, 
$\unit{K}$ denote the group of units in $\OO{K}$ and 
$\unitp{K}$ denote the group of \emph{totally positive units} defined by 
\[
    \unitp{K} \coloneqq 
    \left\{ u \in \unit{K}\, \mid \, 
    \sigma_i(u)>0 \; \text{for all $1 \le i \le s$} \right\}. 
\]
Let $\Half \coloneqq \{ z \in \C\, \mid \, \Im z > 0 \}$ be 
the upper half-plane. 
Consider the action $\OO{K} \curvearrowright \Half^s \times \C^t$ given by translations: 
\[
    T_a(w_1, \ldots ,w_s,z_1, \dots ,z_t) \coloneqq 
    (w_1 + \sigma_1(a), \ldots ,z_t + \sigma_{s+t}(a))
\]
for $a \in \OO{K}$, 
and the action $\unitp{K} \curvearrowright \Half^s \times \C^t$ given by rotations: 
\[
    R_u(w_1, \ldots ,w_s,z_1, \dots ,z_t) \coloneqq 
    (\sigma_1(u) \cdot w_1, \ldots ,\sigma_{s+t}(u) \cdot z_t) 
\]
for $u \in \unitp{K}$. 
These actions define the action 
$( \unitp{K} \ltimes \OO{K} ) \curvearrowright \Half^s \times \C^t$. 
The rank of the group $\OO{K}$ is $n$ since 
$\OO{K} \otimes_{\Z} \Q \simeq K$ holds. 
The group structure of $\unitp{K}$ is described by Dirichlet's unit theorem as follows: 

\begin{theorem}[Dirichlet's unit theorem]
\label{Thm:Dirichlet's unit theorem}
    The image of the following inclusion
    \[
    \begin{array}{c}
        \log : \unitp{K} \hookrightarrow \R^{s+t} \\
        u \mapsto (\log \sigma_1(u), \ldots ,\log \sigma_s(u),
        \log \abs{\sigma_{s+1}(u)}^2, \ldots ,\log \abs{\sigma_{s+t}(u)}^2)
    \end{array}
    \]
    is a lattice in the hyperplane 
    $H \coloneqq \left\{x \in \R^{s+t}\, \mid \, 
    \sum_{i=1}^{s+t} x_i =0  \right\}$. 
\end{theorem}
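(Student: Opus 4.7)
The plan is to check the three conditions characterizing a lattice in $H$: (i) the image is contained in $H$, (ii) the map is injective with discrete image, and (iii) the image spans $H$, i.e.\ has rank $s+t-1$.

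First I would verify (i) via the norm formula. For $u \in \unitp{K}$,
\[
\prod_{i=1}^s \sigma_i(u) \cdot \prod_{i=1}^t \abs{\sigma_{s+i}(u)}^2 = N_{K/\Q}(u) = \pm 1,
\]
and total positivity of $u$ forces the left-hand side to be strictly positive, hence equal to $+1$. Taking logarithms yields exactly $\sum_{i=1}^{s+t} x_i = 0$, so the image lies in $H$.

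For (ii), the key input is Kronecker's theorem. If the logarithmic image of $u \in \unitp{K}$ vanishes, then every archimedean absolute value of $u$ equals $1$, so $u$ is a root of unity; since $s \ge 1$ the only roots of unity in $K$ are $\pm 1$, and only $+1$ is totally positive, giving injectivity. Discreteness follows from the same finiteness principle: any bounded subset of $H$ pulls back to a set of units whose Galois conjugates are uniformly bounded, and there are only finitely many elements of $\OO{K}$ with this property since the coefficients of their minimal polynomials over $\Z$ are bounded.

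The main obstacle is (iii). I would follow the classical geometry-of-numbers route, applying Minkowski's theorem on lattice points in symmetric convex bodies to the image of $\OO{K}$ under the Minkowski embedding into $\R^s \times \C^t$. By iteratively choosing rectangular boxes with prescribed product of side lengths, one produces, for each of the $s+t$ archimedean places, a unit whose logarithmic image is dominant in the corresponding coordinate; an elementary sign-of-entries argument then extracts from $\log(\unit{K})$ a family of $s+t-1$ vectors in $H$ that are linearly independent, establishing the lower bound on the rank, while the upper bound is automatic from (i). Finally, to pass from $\unit{K}$ to the finite-index subgroup $\unitp{K}$, note that the quotient $\unit{K}/\unitp{K}$ injects into $\{\pm 1\}^s$ via the signs of the real embeddings, so the logarithmic image of $\unitp{K}$ has the same rank $s+t-1$, completing the proof.
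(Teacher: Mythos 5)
The paper does not prove this statement at all: it is quoted as the classical Dirichlet unit theorem and used as a black box (the only related machinery invoked later is Minkowski theory via \cite{Neu99}), so there is no in-paper argument to compare against. Your outline is the standard textbook proof and its main steps are correct: the norm computation $\prod_{i=1}^s\sigma_i(u)\prod_{i=1}^t\abs{\sigma_{s+i}(u)}^2=N_{K/\Q}(u)=+1$ for $u\in\unitp{K}$ places the image in $H$; Kronecker's theorem plus $s\ge 1$ (so the only roots of unity in $K$ are $\pm 1$, and only $+1$ is totally positive) gives injectivity, and the bounded-conjugates/bounded-minimal-polynomial finiteness argument gives discreteness; the Minkowski convex-body construction at each archimedean place plus the sign-pattern linear algebra gives rank $s+t-1$; and passing from $\unit{K}$ to the finite-index subgroup $\unitp{K}$ (kernel of the sign map to $\{\pm1\}^s$) preserves the rank, so the image is a full lattice in $H$. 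The one step you elide in (iii) is how the integers produced by Minkowski's theorem become \emph{units}: the boxes yield a sequence of elements of $\OO{K}$ with uniformly bounded norm whose absolute values shrink at all places except the chosen one, and one must then invoke the finiteness of (principal) ideals of bounded norm, or of nonassociate integers of bounded norm, to pigeonhole two of them into the same associate class and take their quotient. That pigeonhole is the standard ingredient and should be stated explicitly if this sketch were expanded, but it is not a conceptual gap; as an outline of the classical proof your proposal is sound.
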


In particular, 
the rank of the group $\unitp{K}$ is $s+t-1$. 
To make the action 
$( \unitp{K} \ltimes \OO{K} ) \curvearrowright \Half^s \times \C^t$ discrete, 
we take a subgroup $U \subset \unitp{K}$ which has a lower rank. 

\begin{definition}\label{Def:admissible}
    Let $\pr_{\R^s} : \R^{s+t} \to \R^{s}$ be the projection on the first $s$ coordinates. 
    We call a subgroup $U \subset \unitp{K}$ \emph{admissible} if 
    the rank of $U$ is $s$ and the image
    $\pr_{\R^s}(\log (U))$ is a lattice in $\R^s$. 
\end{definition}

By Dirichlet's unit theorem, 
one can always find an admissible subgroup $U$. 
Oeljeklaus and Toma proved in \cite{OT05} that the action 
$( U \ltimes \OO{K} ) \curvearrowright (\Half^s \times \C^t)$ 
is fixed-point-free, properly discontinuous and co-compact for any 
admissible subgroup $U \subset \unitp{K}$. 
Now we can define OT manifolds. 

\begin{definition}
    The \emph{OT manifold} associated to the number field $K$ and 
    to the admissible subgroup $U \subset \unitp{K}$ is 
    a compact complex manifold constructed by the quotient 
    \[
        X(K,U) \coloneqq (\Half^s \times \C^t) / ( U \ltimes \OO{K} ). 
    \]
    We call this an OT manifold of type $(s,t)$. 
\end{definition}

\begin{remark}
    OT manifolds of type $(1,1)$ are known as Inoue-Bombieri surfaces. 
    OT manifolds originally arose from an attempt to generalize 
    Inoue-Bombieri surfaces to manifolds of higher dimension. 
\end{remark}

\subsection{Solvmanifold structure}
Since $\unitp{K} \ltimes \OO{K}$ is a solvable group, 
it is natural to expect that there is a solvmanifold structure 
on an OT manifold. 
In \cite{Kas13b}, 
OT manifolds are constructed as solvmanifolds. 
We shall construct OT manifolds as solvmanifolds and 
show that these coincide with the original ones. 
We also review the structure of solvable Lie algebras associated to 
OT manifolds. 

\begin{lemma}\label{Lem:definition of phi}
    For any admissible subgroup $U$, 
    there is a Lie group homomorphism $\phi$ 
    such that the following diagram 
    is commutative: 
    \begin{equation}\label{Equ:commutativity of phi}
    \begin{tikzcd}[row sep=large, column sep=large]
        U \arrow[r, hook, "\pr_{\R^s} \circ \log"] \arrow[rd, "R"'] 
        & \R^s \arrow[d, "\phi"] \\
        & \GL(\R^s \oplus \C^t), 
    \end{tikzcd}
    \end{equation}
    where $R$ denotes the rotations 
    $R(u) \coloneqq 
    \mathrm{diag}(\sigma_1(u), \ldots ,\sigma_{s+t}(u))$ 
    for $u \in U$. 
\end{lemma}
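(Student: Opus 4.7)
The plan is to construct $\phi$ by defining its derivative $d\phi \colon \R^s \to \End_\R(\R^s \oplus \C^t)$ on a $\Z$-basis of the lattice $\Lambda \coloneqq \pr_{\R^s}(\log U)$, then integrating via the simple connectedness of $\R^s$. The first step is to verify that the map $\pr_{\R^s} \circ \log \colon U \to \R^s$ really is injective (the hook arrow in the diagram). By admissibility, $U$ is free abelian of rank $s$ and $\Lambda$ is a lattice of rank $s$ in $\R^s$, so the induced surjection $U \twoheadrightarrow \Lambda$ has kernel of rank zero, i.e.\ a torsion subgroup; but $\unitp{K}$ is torsion-free when $s \ge 1$ (any totally positive root of unity in $K$ must restrict to $+1$ under the real embedding $\sigma_1$, hence equals $1$), so the kernel is trivial and $\pr_{\R^s} \circ \log$ embeds $U$ as $\Lambda$.

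Next, fix a $\Z$-basis $(a_1, \ldots, a_s)$ of $U$ and set $v_k \coloneqq \pr_{\R^s}(\log a_k) = (\log \sigma_j(a_k))_{j=1}^s$; this is simultaneously a $\Z$-basis of $\Lambda$ and an $\R$-basis of $\R^s$. For each $k$ and each $i \in \{1, \ldots, t\}$, pick any complex logarithm $\ell_{ik} \in \C$ of $\sigma_{s+i}(a_k)$, and define $M_k \in \End_\R(\R^s \oplus \C^t)$ to act by multiplication by $\log \sigma_j(a_k)$ on the $j$-th real factor and by $\C$-linear multiplication by $\ell_{ik}$ on the $i$-th complex factor. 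A direct computation on each block gives $\exp(M_k) = R(a_k)$, using that the exponential of multiplication by $\ell \in \C$ on $\C$ is multiplication by $e^{\ell}$. Moreover, the operators $M_1, \ldots, M_s$ commute pairwise, since each preserves the decomposition $\R \oplus \cdots \oplus \R \oplus \C \oplus \cdots \oplus \C$ and acts on every summand by a (real or complex) scalar.

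Because the $M_k$ commute, the $\R$-linear map $d\phi \colon \R^s \to \End_\R(\R^s \oplus \C^t)$ determined by $d\phi(v_k) \coloneqq M_k$ has abelian image and is therefore a Lie algebra homomorphism. Since $\R^s$ is simply connected, it integrates uniquely to a Lie group homomorphism $\phi \colon \R^s \to \GL(\R^s \oplus \C^t)$, which satisfies $\phi(v_k) = \exp(M_k) = R(a_k)$ by construction. Both $R$ and $\phi \circ \pr_{\R^s} \circ \log$ are then group homomorphisms $U \to \GL(\R^s \oplus \C^t)$ agreeing on the generators $(a_k)$, so they coincide on all of $U$, establishing commutativity of the diagram. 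The only design freedom is in the choice of the complex logarithms $\ell_{ik}$; different choices produce different valid $\phi$'s, and this ambiguity is precisely what gets recorded in the matrix $C = (c_{ij})$ of the next theorem. There is no serious obstacle: the universal property of $\R^s$ as a simply connected abelian Lie group reduces the construction to picking pairwise commuting matrix logarithms of $R(a_1), \ldots, R(a_s)$, and the diagonal form of the $R(a_k)$ makes any such choice commute automatically.
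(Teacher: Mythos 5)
Your proof is correct and follows essentially the same route as the paper: both reduce the problem to defining a Lie algebra homomorphism $d\phi$ on a $\Z$-basis of the lattice $\pr_{\R^s}(\log U)$ by choosing complex logarithms of the $\sigma_{s+i}(a_k)$ (your $\ell_{ik}$ correspond to the paper's $\sum_j c_{ij}\kappa_{jk}$, i.e.\ the matrix $C$ expressed through the invertible matrix $(\kappa_{ij})$), extending $\R$-linearly, and composing with $\exp$. Your additional check that $\pr_{\R^s}\circ\log$ is injective on $U$ (via torsion-freeness of $\unitp{K}$) is a harmless extra verification that the paper leaves implicit.
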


\begin{proof}
    It is enough to prove that there is a Lie algebra map 
    $d\phi \colon \R^s \to \End(\R^s \oplus \C^t)$ so that 
    the following diagram commutative: 
    \[
    \begin{tikzcd}[row sep=large, column sep=large]
        U \arrow[r, hook, "\pr_{\R^s} \circ \log"] \arrow[rd, "R"'] 
        & \R^s \arrow[d, "\phi"] \arrow[r, "d\phi"]
        & \End(\R^s \oplus \C^t) \arrow[ld, "\exp"]
        \\
        & \GL(\R^s \oplus \C^t). 
    \end{tikzcd}
    \]
    The restriction of the map $d\phi$ to $\pr_{\R^s}(\log (U))$ is 
    determined by the commutativity 
    (up to the choice of a branch of the logarithm). 
    To construct the map $d\phi$, 
    it suffices to simply extend the above map linearly to $\R^s$. 

    Specifically, 
    we construct it as follows. 
    Fix a basis $(a_i)_{i=1}^s$ of $U$. 
    Then $\pr_{\R^s}(\log(a_i))=(\kappa_{1i}, \ldots ,\kappa_{si})$ 
    forms a $\Z$-basis of the lattice $\pr_{\R^s}(\log (U))$, 
    especially $(\kappa_{ij})_{ij} \in \Mat_{s \times s}(\R)$ is an invertible matrix. 
    Take a matrix $C=(c_{ij})_{ij} \in \Mat_{t \times s}(\C)$ 
    so that 
    \begin{equation}\label{Equ:definition of c_ij}
        \sigma_{s+i}(a_k)=
        \exp \left( \sum_{j=1}^s c_{ij} \kappa_{jk} \right) 
    \end{equation}
    for all $1 \le i \le t$. 
    Note that the matrix $C$ depends on 
    the choice of a branch of the logarithm. 
    We define a linear map $d\phi$ as follows: 
    \[
    d\phi(t_1, \ldots ,t_s) \coloneqq 
    \text{diag} ( t_1, \ldots ,t_s, (\sum_{j=1}^s c_{ij} t_j)_{i=1}^t ). 
    \]
    This linear map makes the above diagram commutative. 
\end{proof}

As mentioned in the proof, 
note that the map $\phi$ is not uniquely determined.
Using the map $\phi$, 
we can construct a meta-abelian Lie group 
$G \coloneqq \R^s \ltimes_{\phi} (\R^s \oplus \C^t)$. 
We call this Lie group (and its Lie algebra) a 
\emph{Lie group (algebra) associated to the OT manifold $X(K,U)$}. 
The following lemma is obvious by the commutativity of the diagram (\ref{Equ:commutativity of phi}): 

\begin{lemma}
    The following injective map is a group homomorphism: 
    \[
    \begin{array}{c}
    \iota : U \ltimes \OO{K} \hookrightarrow G \\
    (u,a) \mapsto (\pr_{\R^s}(\log (U)), \sigma_1(a), \ldots ,\sigma_{s+t}(a)). 
    \end{array}
    \]
\end{lemma}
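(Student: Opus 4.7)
The plan is to check the two assertions separately: that $\iota$ is a group homomorphism and that it is injective. The definition (modulo the evident typo $\log(U)\mapsto \log(u)$) unpacks as
\[
\iota(u,a) = \bigl(\pr_{\R^s}(\log u),\; \sigma_1(a),\ldots,\sigma_{s+t}(a)\bigr).
\]

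First I would verify the homomorphism property by a direct comparison. The product in $U\ltimes\OO{K}$ is $(u_1,a_1)(u_2,a_2) = (u_1u_2,\; a_1 + u_1 a_2)$, and the product in $G=\R^s \ltimes_\phi (\R^s\oplus\C^t)$ is $(x_1,y_1)(x_2,y_2) = (x_1+x_2,\; y_1+\phi(x_1)y_2)$. The equality of first coordinates reduces to $\log(u_1u_2) = \log u_1 + \log u_2$, which is immediate from the definition of $\log$ in Theorem \ref{Thm:Dirichlet's unit theorem}, together with the linearity of $\pr_{\R^s}$. For the second coordinates, I need
\[
\bigl(\sigma_j(u_1a_2)\bigr)_{j=1}^{s+t} = \phi\bigl(\pr_{\R^s}(\log u_1)\bigr)\cdot \bigl(\sigma_j(a_2)\bigr)_{j=1}^{s+t}.
\]
The left-hand side equals $\bigl(\sigma_j(u_1)\sigma_j(a_2)\bigr)_j$ because each $\sigma_j$ is a ring homomorphism. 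By the commutativity of diagram (\ref{Equ:commutativity of phi}) in Lemma \ref{Lem:definition of phi}, $\phi(\pr_{\R^s}(\log u_1)) = R(u_1) = \mathrm{diag}(\sigma_1(u_1),\ldots,\sigma_{s+t}(u_1))$, which is exactly componentwise multiplication by $(\sigma_j(u_1))_j$, so the two sides agree.

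For injectivity, suppose $\iota(u,a) = (0,0)$. Then $\sigma_j(a)=0$ for $j=1,\ldots,s+t$, and hence also for $j = s+t+1,\ldots,s+2t$ since those embeddings are the complex conjugates of $\sigma_{s+1},\ldots,\sigma_{s+t}$. An element of $K$ is determined by its images under the full collection of embeddings, so $a=0$. On the other hand, admissibility of $U$ says that $U$ has rank $s$ and $\pr_{\R^s}(\log U)$ is a lattice in $\R^s$, also of rank $s$; hence the homomorphism $\pr_{\R^s}\circ\log\colon U\to\R^s$ is injective, and from $\pr_{\R^s}(\log u)=0$ we conclude $u=1$.

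There is no real obstacle here; the argument is a routine unwinding of definitions. The only substantive input is the compatibility between the multiplicative action $R$ of $U$ by rotations and the semidirect-product action $\phi$ in $G$, which is precisely the content of Lemma \ref{Lem:definition of phi}; admissibility of $U$ enters only to ensure injectivity of $\pr_{\R^s}\circ\log$ on $U$.
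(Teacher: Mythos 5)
Your proof is correct and follows the same route as the paper, which simply declares the lemma an immediate consequence of the commutativity of the diagram in Lemma \ref{Lem:definition of phi} ($\phi(\pr_{\R^s}(\log u))=R(u)$); your computation just makes that explicit. Your additional verification of injectivity (via injectivity of the embeddings and the rank-$s$ condition in admissibility) is a harmless bonus the paper leaves implicit.
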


Now we can prove that the meta-abelian Lie group $G$ and its 
discrete subgroup $\iota(U \ltimes \OO{K})$ reconstruct 
an OT manifold. 
The following proposition immediately follows from 
the commutativity of the diagram (\ref{Equ:commutativity of phi}): 

\begin{proposition}
    The following diffeomorphism 
    \[
    \begin{array}{c}
    \psi : \Half^s \times \C^t \stackrel{\sim}{\longrightarrow} 
    \R^s \ltimes_{\phi} (\R^s \oplus \C^t) \\
    ((\sqrt{-1}x_i+y_i)_{i=1}^s,(z_j)_{j=1}^t) \mapsto 
    ((\log x_i)_{i=1}^s, (y_i)_{i=1}^s, (z_j)_{j=1}^t) 
    \end{array}
    \]
    makes the following diagram commutative: 
    \[
    \begin{tikzcd}[row sep=large, column sep=large]
        \Half^s \times \C^t \arrow[r,  "\psi"] \arrow[d, "L_{(u,a)}"'] 
        & \R^s \ltimes_{\phi} (\R^s \oplus \C^t) \arrow[d, "L_{\iota(u,a)}"] \\
        \Half^s \times \C^t \arrow[r,  "\psi"] 
        & \R^s \ltimes_{\phi} (\R^s \oplus \C^t), 
    \end{tikzcd}
    \]
    for all $(u,a) \in U \times \OO{K}$, 
    where $L_{(u,a)}$ denotes the action of $(u,a)$ in the action 
    ($U \ltimes \OO{K}) \curvearrowright (\Half^s \times \C^t)$ 
    and $L_{\iota(u,a)}$ denotes the left multiplication of $\iota(u,a)$ with 
    respect to the Lie group structure. 
\end{proposition}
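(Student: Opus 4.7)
The statement is a naturality assertion: the diffeomorphism $\psi$ intertwines the action of $U \ltimes \OO{K}$ on $\Half^s \times \C^t$ with the left-multiplication action of $\iota(U \ltimes \OO{K})$ on the Lie group. My plan is to verify this directly, reducing to the two types of generating elements of $U \ltimes \OO{K}$, namely pure translations $(e,a)$ and pure rotations $(u,0)$. Since $\iota$ is a group homomorphism and left multiplication is a group action, commutativity of the diagram for these two families suffices to conclude commutativity for a general $(u,a)$ via the decomposition $(u,a) = (e,a)\cdot (u,0)$ in $U \ltimes \OO{K}$.

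First I would treat the translation case $(e,a)$. Here $\iota(e,a) = (0, \sigma_1(a), \ldots, \sigma_{s+t}(a))$ lies in the nilradical factor $\R^s \oplus \C^t$, so left multiplication by $\iota(e,a)$ amounts to translation by the vector $(\sigma_i(a))_{i=1}^{s+t}$ in the second factor. On the other hand, writing $w_i = \sqrt{-1}x_i + y_i$, the translation $T_a$ shifts only the real parts of $w_i$ (by $\sigma_i(a)$) and of $z_j$ (by $\sigma_{s+j}(a)$), leaving the imaginary parts $x_i$ unchanged. Under $\psi$, which records $(\log x_i, y_i, z_j)$, these two descriptions match identically.

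Next I would treat the rotation case $(u,0)$. Here $\iota(u,0) = (\pr_{\R^s}(\log u), 0)$, and left multiplication by this element sends $(X, v) \in \R^s \ltimes_\phi (\R^s \oplus \C^t)$ to $(\pr_{\R^s}(\log u) + X, \phi(\pr_{\R^s}(\log u))\,v)$. On the geometric side, $R_u$ multiplies each $w_i$ by $\sigma_i(u) > 0$ and each $z_j$ by $\sigma_{s+j}(u) \in \C$; so $y_i \mapsto \sigma_i(u) y_i$, $z_j \mapsto \sigma_{s+j}(u) z_j$, and $\log x_i \mapsto \log x_i + \log \sigma_i(u)$. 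Using that $\log \sigma_i(u) = (\pr_{\R^s}(\log u))_i$, the $\R^s$-components agree; for the remaining components, the key input is precisely the commutative diagram~(\ref{Equ:commutativity of phi}) that defines $\phi$, which gives $\phi(\pr_{\R^s}(\log u)) = R(u) = \mathrm{diag}(\sigma_1(u), \ldots, \sigma_{s+t}(u))$, so the two descriptions of the action on $\R^s \oplus \C^t$ coincide.

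There is no real obstacle here: both halves of the computation are essentially bookkeeping, and the nontrivial content is entirely concentrated in the defining relation $\exp \circ\, d\phi = R \circ (\pr_{\R^s} \circ \log)^{-1}$ from Lemma~\ref{Lem:definition of phi}. Combining the two cases with the homomorphism property of $\iota$ and the functoriality of left multiplication completes the verification.
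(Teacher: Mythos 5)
Your proposal is correct and follows essentially the same route as the paper, which offers no separate argument and simply states that the proposition is an immediate consequence of the commutativity of the diagram (\ref{Equ:commutativity of phi}) defining $\phi$. Your reduction to the translation and rotation generators, with the composite case handled by the homomorphism property of $\iota$, is exactly the bookkeeping the paper leaves implicit, and the key input you identify ($\phi(\pr_{\R^s}(\log u))=R(u)$) is the same one the paper invokes.
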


By the above proposition, 
the map $\psi$ induces the following diffeomorphism 
\[
X(K,U) = (\Half^s \times \C^t) / (U \ltimes \OO{K}) 
\stackrel{\sim}{\longrightarrow} 
\Gamma \backslash G, 
\]
where $\Gamma \coloneqq \iota(U \ltimes \OO{K})$ is the 
lattice in the meta-abelian Lie group $G$. 

The Lie algebra of $G$ is expressed as follows: 

\begin{proposition}\label{Prop:Lie algebra of OT}
    The Lie algebra of $G$ is a meta-abelian Lie algebra   
    $\mathfrak{g} \coloneqq \R^s \ltimes_{d\phi} (\R^s \oplus \C^t)$ 
    where the map $d\phi : \R^s \to \End(\R^s \oplus \C^t)$ 
    is the following: 
    \begin{equation}\label{Equ:Lie algebra of OT}
    d\phi(t_1, \ldots ,t_s) = 
    \mathrm{diag} (t_1, \ldots ,t_s,(\sum_{j=1}^s c_{ij} t_j)_{i=1}^t), 
    \end{equation}
    where the matrix $C=(c_{ij})_{ij}$ is defined in the proof of Lemma \ref{Lem:definition of phi}. 
\end{proposition}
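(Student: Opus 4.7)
The proof is essentially a direct computation and follows almost immediately from how $\phi$ was constructed in Lemma \ref{Lem:definition of phi}. The plan is to unwind the definitions, identify the Lie algebra of a semi-direct product with the corresponding semi-direct product of Lie algebras induced by the derivative, and observe that the particular $\phi$ used here was built from its derivative by design, so no genuine differentiation is needed.

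More precisely, I would first recall the general fact that for a Lie group homomorphism $\phi \colon \R^s \to \GL(V)$, the Lie algebra of $\R^s \ltimes_\phi V$ is $\R^s \ltimes_{D\phi|_0} V$, where $D\phi|_0 \colon \R^s \to \End(V)$ is the differential at the identity. Then I would revisit the construction in the proof of Lemma \ref{Lem:definition of phi}: there a linear map $d\phi \colon \R^s \to \End(\R^s \oplus \C^t)$ was defined explicitly via the diagonal matrix in (\ref{Equ:Lie algebra of OT}), and the Lie group homomorphism $\phi$ was obtained as $\phi = \exp \circ d\phi$. Because $d\phi$ is already linear, its differential at $0$ is itself, so $D\phi|_0 = d\phi$.

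Combining these two observations gives precisely the description in the statement. The bracket relation
\[
[(X_1,Y_1),(X_2,Y_2)] = (0, d\phi(X_1)Y_2 - d\phi(X_2)Y_1)
\]
is then the standard bracket of a semi-direct product of abelian Lie algebras, as spelled out in the discussion of meta-abelian Lie algebras in the Preliminaries.

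There is essentially no obstacle here: the main content of the proposition is really contained in Lemma \ref{Lem:definition of phi}, where the choice of the matrix $C$ and the linear map $d\phi$ encoding the number-theoretic data (\ref{Equ:definition of c_ij}) were made; the present proposition simply records that this linear map is the Lie bracket data of the Lie algebra $\fg$ of $G$. The only thing to be careful about is to make clear that the identification of the Lie algebra of a semi-direct product Lie group uses the derivative of the conjugation/rotation action, which here coincides with the preassigned linear map $d\phi$.
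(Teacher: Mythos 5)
Your proposal is correct and matches the paper's (implicit) treatment: the paper states this proposition without a separate proof, precisely because $\phi$ was defined in Lemma \ref{Lem:definition of phi} as $\exp\circ d\phi$ with $d\phi$ the linear map in (\ref{Equ:Lie algebra of OT}), so the Lie algebra of the semi-direct product is immediately $\R^s \ltimes_{d\phi}(\R^s\oplus\C^t)$ with the standard bracket. Your only added content is spelling out the standard fact $D\phi|_0 = D\exp|_0\circ d\phi = d\phi$, which is a fine and harmless elaboration.
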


Since $G$ has a lattice, 
the Lie algebra $\mathfrak{g}$ is unimodular. 
Thus we have 
\begin{equation}\label{Equ:sum of c_ij is -1/2}
\Re c_{1j} +\cdots+\Re c_{tj} =-\frac{1}{2}
\end{equation}
for all $1 \le j \le s$. 

We finally note that the Lie group $G$ satisfies the following 

\begin{lemma}\label{Lem:nilradical of OT}
    The nilradical of the Lie group 
    $G = \R^s \ltimes_{\phi} (\R^s \oplus \C^t)$ 
    is $\R^s \oplus \C^t$. 
\end{lemma}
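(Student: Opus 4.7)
The plan is to verify the two containments separately. The easy direction is to observe that $\R^s\oplus\C^t$ sits inside $\fg=\R^s\ltimes_{d\phi}(\R^s\oplus\C^t)$ as an abelian ideal: the semidirect bracket formula gives $[(0,Y_1),(0,Y_2)]=0$ (so the subspace is abelian) and $[(X,Y'),(0,Y)]=(0,d\phi(X)Y)\in\R^s\oplus\C^t$ (so it is an ideal). An abelian ideal is nilpotent, so $\R^s\oplus\C^t$ is contained in the nilradical of $\fg$.

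For the reverse inclusion I plan to argue by contradiction: let $\fn$ be any nilpotent ideal of $\fg$ and suppose some element $(X,v)\in\fn$ has nonzero $\R^s$-component $X=(t_1,\ldots,t_s)$. Fix an index $j$ with $t_j\ne 0$. The key observation is that, by the explicit formula (\ref{Equ:Lie algebra of OT}), the operator $d\phi(X)$ is diagonal (on the standard splitting $\R^s\oplus\C^t$, with the $\C^t$ part viewed as $\R^{2t}$), and the $j$-th standard basis vector $e_j$ of the middle $\R^s$-summand is an eigenvector with eigenvalue $t_j\ne 0$.

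Now I exploit the ideal property of $\fn$ together with this eigenvector: iteratively bracketing $(0,e_j)$ with $(X,v)$ gives $\ad_{(X,v)}^k(0,e_j)=(0,t_j^k e_j)\in\fn$ for every $k\geq 1$, and these elements are all nonzero. On the other hand, nilpotence of $\fn$ (directly from the lower central series terminating) forces $\ad_{(X,v)}|_\fn$ to be nilpotent, which contradicts what I just produced. Hence $X=0$, so $\fn\subseteq\R^s\oplus\C^t$, completing the argument.

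No serious obstacle is expected here: the only structural input is that $d\phi(X)$ is non-nilpotent whenever $X\ne 0$, which is immediate from the block-diagonal formula (\ref{Equ:Lie algebra of OT}) because $X$ itself appears along the first $s$ diagonal entries. The remainder is routine bookkeeping with the semidirect-product bracket and the definition of a nilpotent Lie algebra.
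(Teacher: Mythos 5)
Your argument is correct and is essentially the paper's proof in more explicit form: both reduce to the Lie algebra, note that $\R^s\oplus\C^t$ is an abelian ideal, and rule out any nilpotent ideal meeting $\R^s$ nontrivially because $d\phi(X)$ has the nonzero diagonal entries $t_1,\ldots,t_s$ (the paper phrases this via Engel-type nilpotency of adjoints, you via an explicit eigenvector iteration). The only cosmetic point is that your contradiction should be run from the element $(0,t_je_j)\in\fn$ (or note $(0,e_j)\in\fn$ by rescaling), but this is immediate since $t_j\neq0$.
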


\begin{proof}
    It suffices to show that the nilradical of the Lie algebra 
    $\mathfrak{g} = \R^s \ltimes_{d\phi} (\R^s \oplus \C^t)$ is $\R^s \oplus \C^t$. 
    For all $X \in \R^s \subset \mathfrak{g}$, 
    $\ad_X$ is nilpotent if and only if $X=0$ by the structure of $d\phi$ 
    as in (\ref{Equ:Lie algebra of OT}). 
    Since the nilpotency of a Lie algebra is equivalent to 
    the nilpotency of adjoint representations of its elements, 
    the proof is complete. 
\end{proof}

\subsection{OT manifolds of simple type}

In \cite{OT05}, 
in which OT manifolds are originally defined, 
certain OT manifolds are called \emph{of simple type}. 
The main theorem of this paper is precisely about 
OT manifolds of simple type. 

\begin{definition}
    An OT manifold $X(K,U)$ is \emph{of simple type} if 
    there does not exist proper intermediate field extension 
    $\Q \subsetneq K' \subsetneq K$
    such that $U \subset \unitp{K'}$. 
\end{definition}

It can easily be seen that the above condition is equivalent to $\Q(U)=K$. 
There are other characterizations using the simplicity of lattices as shown in 
the following

\begin{lemma}
    Let $G$ be a Lie group associated to an OT manifold $X(K,U)$. 
    Then, the following conditions are equivalent: 

    \begin{itemize}
    \setlength{\itemsep}{5pt}
        \item[(i)] The rank of any $U$-invariant 
        subgroup of $\OO{K}$ is either $0$ or $\rank{\OO{K}}$,  
        \item[(ii)] $\Q (U) = K$, 
        \item[(iii)] There exists a unit $u \in U$ such that $\deg(u)=[K:\Q]$, 
        \item[(iv)] The lattice $\iota(U \ltimes \OO{K})$ in $G$ is 
        of simple type 
        (see Definition \ref{Def:lattice of simple type}).
    \end{itemize}
\end{lemma}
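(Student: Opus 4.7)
The plan is to prove the four conditions equivalent via the central hub (ii), by showing (i)$\Leftrightarrow$(ii), (ii)$\Leftrightarrow$(iii), and (ii)$\Leftrightarrow$(iv). The first and third rest on a shared module-theoretic observation, while (ii)$\Rightarrow$(iii) is the main technical step.

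For the module-theoretic equivalences, the key observation is that $\Q[U]$ coincides with the $\Q$-linear span of $U$ in $K$ (because $U$ is multiplicatively closed), and, being a finite-dimensional $\Q$-subalgebra of the field $K$, is itself a subfield; hence $\Q[U] = \Q(U)$. Any $U$-invariant subgroup of $\OO{K}$, after tensoring with $\Q$, becomes a $\Q(U)$-vector subspace of $K = \OO{K} \otimes_\Z \Q$, so its $\Q$-rank is a multiple of $[\Q(U):\Q]$. Thus condition (i) holds exactly when $[K:\Q(U)] = 1$, i.e., (ii). For (iv), I would identify $\Gamma \cap (\R^s \oplus \C^t) = \iota(\OO{K})$ and compute in the semi-direct product $G_C$ that conjugation by $\iota(u,a)$ sends $\iota(1,b)$ to $\iota(1,ub)$; hence the $\Gamma$-representation on $\iota(\OO{K}) \otimes_\Z \Q \cong K$ factors through $U$ acting by multiplication, and irreducibility is exactly the condition $\Q(U) = K$ by the same $\Q(U)$-module argument.

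The implication (iii)$\Rightarrow$(ii) is immediate, since $\Q(u) \subset \Q(U) \subset K$ and $\deg(u) = [K:\Q]$ force $\Q(u) = K$. For (ii)$\Rightarrow$(iii), assume $\Q(U) = K$. The restrictions $\chi_j \coloneqq \sigma_j|_U \colon U \to \C^\times$ are pairwise distinct characters, because $\chi_i = \chi_j$ on $U$ would force $\sigma_i = \sigma_j$ on the $\Q$-algebra $\Q[U] = K$. Consider the character map $\chi = (\chi_1,\ldots,\chi_n) \colon U \to (\C^\times)^n$ and the Zariski closure $T$ of $\chi(U)$, a closed algebraic subgroup of $(\C^\times)^n$. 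For each $i \neq j$, the rational function $z_i/z_j$ restricts on $T$ to the character $\chi_i/\chi_j$, which is nontrivial on the Zariski-dense subset $\chi(U)$; hence $\{z_i = z_j\} \cap T$ is a proper closed subvariety of $T$. Assuming $T$ is irreducible, the finite union $\bigcup_{i \neq j}(\{z_i = z_j\} \cap T)$ is a proper closed subvariety, and Zariski density of $\chi(U)$ yields $u \in U$ with $\sigma_1(u), \ldots, \sigma_n(u)$ pairwise distinct; such a $u$ satisfies $\deg(u) = [K:\Q]$, giving (iii).

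The main obstacle is (ii)$\Rightarrow$(iii). The classical primitive element theorem only produces $\Q$-linear combinations of generators of $K$, which typically lie outside the multiplicative group $U$; finding a primitive element inside $U$ requires the algebro-geometric argument above. The most delicate point is the irreducibility of $T$, which is not automatic for finitely generated subgroups of $(\C^\times)^n$. I expect it can be established using admissibility of $U \subset \unitp{K}$ and Dirichlet's unit theorem, possibly after passing to a finite-index subgroup of $U$ corresponding to the identity component of $T$ and verifying that enough character distinctness persists to force the existence of a primitive element.
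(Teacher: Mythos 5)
Your reductions (i)$\Leftrightarrow$(ii), (ii)$\Leftrightarrow$(iv) and (iii)$\Rightarrow$(ii) are correct, and identifying the $\Gamma$-representation on $\iota(\OO{K})\otimes_\Z\Q$ with $U$ acting on $K$ by multiplication, so that irreducibility becomes $\dim_{\Q(U)}K=1$, is a clean variant of the paper, which instead closes the cycle (i)$\Rightarrow$(ii)$\Rightarrow$(iii)$\Rightarrow$(iv)$\Rightarrow$(i) and proves (iii)$\Rightarrow$(iv) by a minimal-polynomial argument.

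The genuine gap is (ii)$\Rightarrow$(iii), which you do not prove. Your torus argument needs the Zariski closure $T$ of $\chi(U)\subset(\C^\times)^n$ to be irreducible, and you leave exactly that open; it is not a removable technicality. When $T$ is disconnected, the finitely many proper closed subsets $\{z_i=z_j\}\cap T$ can cover $T$, and passing to the finite-index subgroup $U'=\chi^{-1}(T^0)$ destroys your only input, since the characters $\sigma_i|_{U'}$ need no longer be pairwise distinct (equivalently, $\Q(U')$ may be strictly smaller than $\Q(U)$). Indeed the bare implication ``$\Q(U)=K$ implies some $u\in U$ has $\deg(u)=[K:\Q]$'' fails for general multiplicative subgroups of $K^\times$: for $K=\Q(\sqrt{2},\sqrt{3})$ and $U=\langle\sqrt{2},\sqrt{3}\rangle$ one has $\Q(U)=K$, yet every element of $U$ lies in one of the three quadratic subfields. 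So any correct proof must exploit that $U$ consists of (totally positive, admissible) units, and your sketch never brings this to bear; the ``I expect it can be established'' paragraph is precisely the missing step. The paper, by contrast, settles (ii)$\Rightarrow$(iii) directly by a primitive-element-type appeal to the finiteness of intermediate fields of the separable extension $K/\Q$ to produce $u\in U$ with $\Q(u)=K$; whatever one thinks of the brevity of that step, your proposal does not reach the same conclusion, so the equivalence with (iii) remains unproved in your argument.
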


\begin{proof}
    (i) $\Rightarrow$ (ii)\, : 
    Assume by contradiction that $\Q(U)=K' \subsetneq K$. 
    If $K' \subsetneq K$, 
    the integer ring $\OO{K'}$ is a subgroup of $\OO{K}$ such that 
    $0 < \rank{\OO{K'}} < \rank{\OO{K}}$. \\
    (ii) $\Rightarrow$ (iii)\, : 
    Since the extension $K/\Q$ is finite and separable, 
    there exists an element $u \in U$ such that $\Q (u) = K$. \\
    (iii) $\Rightarrow$ (iv)\, : 
    Let $\{ 0 \} \subsetneq V \subset \OO{K} \otimes_{\Z} \Q$ be a 
    $U$-invariant subspace. 
    For all $u \in U$, 
    we define a linear operator 
    $L_u \colon \OO{K} \otimes_{\Z} \Q \to \OO{K} \otimes_{\Z} \Q$ 
    which is induced by the multiplication of $u$. 
    The minimal polynomial of the operator $L_u$ is 
    the minimal polynomial of $u$. 
    We assume that the degree of the minimal polynomial of $u$ is 
    $[K:\Q]=\dim_{\Q} (\OO{K} \otimes_{\Z} \Q)$. 
    Since $L_u V \subset V$ holds, 
    the minimal polynomial of $L_u |_{V} \colon V \to V$ divides that of $L_u$. 
    From the irreducibility of the minimal polynomial, 
    we obtain that the minimal polynomial of $L_u |_{V}$ is equal to that of $L_u$. 
    Therefore we have $V=\OO{K} \otimes_{\Z} \Q$. \\
    (iv) $\Rightarrow$ (i)\, : 
    Let $H$ be a $U$-invariant subgroup of $\OO{K}$. 
    Then the $\Q$-vector space $H \otimes_{\Z} \Q \subset \OO{K} \otimes_{\Z} \Q$ is 
    $U$-invariant. 
    The irreducibility of the representation of $U$ 
    on the space $\OO{K} \otimes_{\Z} \Q$ 
    implies that the space $H \otimes_{\Z} \Q$ 
    is $0$ or $\OO{K} \otimes_{\Z} \Q$. 
    Since $\rank{H} = \dim_{\Q}(H \otimes_{\Z} \Q)$, 
    the rank of $H$ is $0$ or $\rank{\OO{K}}$. 
\end{proof}

\begin{remark}
    Let $K$ be a number field which has at least one real embedding and one complex embedding. 
    Then we can always take an admissible subgroup $U \subset \unitp{K}$ such that 
    $X(K,U)$ is of simple type. 
\end{remark}

\subsection{LCK metrics on OT manifolds}
\label{subsection:LCK metrics on OT manifolds}

OT manifolds play an important role in the study of LCK geometry. 

\begin{proposition}
[\cite{OT05}]
\label{Prop:OT of type (s,1) admits LCK}
    For all $s \ge 1$, 
    an OT manifold of type $(s,1)$ admits an LCK metric. 
\end{proposition}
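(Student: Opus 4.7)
The plan is to follow the same principle as in the Hopf manifold example: exhibit a K\"{a}hler form $\tilde\omega_K$ on the universal cover $\Half^s \times \C$ together with a positive function $\Psi$ such that $\Psi$ and $\tilde\omega_K$ transform by the same positive character under the deck group $U \ltimes \OO{K}$. Then $\omega := \Psi^{-1} \tilde\omega_K$ descends to an invariant Hermitian form on $X(K,U)$ whose pullback to the cover satisfies an LCK equation with closed Lee form. Concretely, writing the coordinates as $(w_1,\ldots,w_s,z)\in \Half^s \times \C$ and $y_i := \Im w_i>0$, I would set
\[
\Psi := \prod_{i=1}^{s} y_i^{-1}, \qquad \tilde\omega_K := i \pa \bpa \bigl(\Psi + |z|^2\bigr) = i \pa \bpa \Psi + i\, dz \w d\bar z,
\]
and $\omega := \Psi^{-1} \tilde\omega_K$.

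The key step is the equivariance check. Under the translations $T_a$ ($a \in \OO{K}$): since $\sigma_i(a)\in\R$ for $1\le i\le s$, each $y_i$ is $T_a$-invariant, and the change in $|z|^2$ under $z\mapsto z+\sigma_{s+1}(a)$ is a pluriharmonic polynomial in $z,\bar z$, which is annihilated by $i\pa\bpa$; hence $\Psi$ and $\tilde\omega_K$ are $T_a$-invariant. Under the rotations $R_u$ ($u\in U$): since $u$ is a totally positive unit in a field of type $(s,1)$, the norm identity
\[
\prod_{i=1}^{s}\sigma_i(u)\cdot |\sigma_{s+1}(u)|^2 = N_{K/\Q}(u) = 1
\]
holds. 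Combined with $y_i \mapsto \sigma_i(u)y_i$, this gives $R_u^*\Psi = |\sigma_{s+1}(u)|^2 \Psi$; and since $R_u^*(i\,dz\w d\bar z) = |\sigma_{s+1}(u)|^2\, i\, dz\w d\bar z$ while $R_u^*(i\pa\bpa \Psi) = i\pa\bpa(R_u^*\Psi) = |\sigma_{s+1}(u)|^2\, i\pa\bpa\Psi$, we obtain $R_u^*\tilde\omega_K = |\sigma_{s+1}(u)|^2 \tilde\omega_K$. Thus $\omega = \Psi^{-1}\tilde\omega_K$ is invariant under the full action, so it descends to $X(K,U)$.

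Finally, I would verify positivity and the LCK equation. The Hermitian matrix of $i\pa\bpa \Psi$ in the $w$-directions has the form $\tfrac{\Psi}{4}(vv^* + D)$ with $v_i=1/y_i$ and $D=\mathrm{diag}(1/y_i^2)$, which is positive definite; together with the contribution $i\,dz\w d\bar z$ in the $z$-direction, $\tilde\omega_K$ is a genuine K\"{a}hler form, and multiplying by $\Psi^{-1}=\prod y_i>0$ preserves positivity. Since $\tilde\omega_K$ is closed,
\[
d\omega = d(\Psi^{-1}) \w \tilde\omega_K = -d(\log \Psi) \w \omega,
\]
so $\omega$ is LCK with closed Lee form $\theta := -d\log \Psi = d\log\!\prod_i y_i$, which is also $(U \ltimes \OO{K})$-invariant and descends. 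The main subtlety is forcing a single K\"{a}hler form on $\Half^s\times \C$ to transform by the \emph{same} character as the conformal factor under $R_u$: the Poincar\'e metric on $\Half^s$ is $R_u$-invariant and so mismatches the nontrivial scaling of $i\,dz\w d\bar z$. The choice of the product potential $\prod y_i^{-1}$ is essentially forced by the unit norm relation above, which is exactly the place where the hypothesis $t=1$ enters and which links the LCK structure to the arithmetic of $K$.
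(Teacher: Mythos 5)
Your proposal is correct and follows essentially the same route as the paper: it uses the potential $F=\prod_i(\Im w_i)^{-1}+\abs{z}^2$, the K\"ahler form $\sqrt{-1}\pa\bpa F$, and the conformal factor $\prod_i \Im w_i$ (your $\Psi$ is the reciprocal of the paper's, but the resulting metric is identical). You merely spell out the equivariance, positivity, and Lee-form computations that the paper leaves as ``easy to check,'' and these details are accurate, including the use of $N_{K/\Q}(u)=1$ where $t=1$ enters.
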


\begin{proof}
    Let $\Psi \colon \Half^s \times \C \to \R_{>0}$ be a function defined by 
    \[
    \Psi(w_1, \ldots,w_s,z) \coloneqq (\Im w_1) \cdots (\Im w_s). 
    \]
    We set a strictly plurisubharmonic function 
    $F \coloneqq \Psi^{-1} + \abs{z}^2$ on $\Half^s \times \C$. 
    Then $\omega \coloneqq \sqrt{-1}\pa\bpa F$ is a K\"{a}hler form, 
    which defines a K\"{a}hler metric $g$ on $\Half^s \times \C$. 
    It is easy to check that $\Psi g$ is $U \ltimes \OO{K}$-invariant. 
    Therefore the metric $\Psi g$ defines an LCK metric 
    on $(\Half^s \times \C) / (U \ltimes \OO{K})$. 
\end{proof}

By the construction, 
these LCK metrics are left-invariant with respect to the 
solvmanifold structures on OT manifolds. 
It is shown that 
no LCK metric on OT manifolds can be Vaisman. 

\begin{proposition}[\cite{Kas13b}]
    No OT manifold admits a Vaisman metric. 
\end{proposition}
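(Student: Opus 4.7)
My plan is to reduce to a left-invariant Vaisman structure on the associated solvable Lie algebra $\fg = \R^s \ltimes_{d\phi}(\R^s\oplus\C^t)$ and then obstruct it by examining the spectrum of the adjoint action of the Lee vector.

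First, suppose $X(K,U) \simeq \Gamma \backslash G$ admits a Vaisman metric $g$ with Lee form $\theta$ and Lee vector $A$. Since $\nabla\theta = 0$, the field $A$ together with $JA$ spans a $2$-dimensional abelian Lie algebra of holomorphic Killing fields. The first---and most delicate---step is to invoke a Belgun-type symmetrization argument for LCK solvmanifolds, in the spirit of the invariant-representative results for de Rham cohomology of OT solvmanifolds in \cite{Kas13b}, to replace $g$ by a left-invariant Vaisman triple $(\fg, J_0, \met{\cdot}{\cdot}_0)$ on $\fg$.

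Next, let $\fa = \R^s$ denote the first factor and $\fn = \R^s \oplus \C^t$ the nilradical (cf.\ Lemma \ref{Lem:nilradical of OT}). Because $d\phi(X)$ acts invertibly on $\fn$ for generic $X \in \fa$, one has $[\fg,\fg] = \fn$. Closedness of $\theta$ then forces $\theta|_{\fn} = 0$, so the Lee vector---now characterized by $\theta(\cdot) = \met{A}{\cdot}_0$---lies in $\fn^\perp$. Positive definiteness of $\met{\cdot}{\cdot}_0$ gives $\fn^\perp \cap \fn = 0$, hence the projection $\fg \to \fg/\fn \cong \fa$ restricts to a linear isomorphism $\fn^\perp \xrightarrow{\sim} \fa$; writing $A = A_0 + A_1$ with $A_0 \in \fa$ and $A_1 \in \fn$, the non-vanishing of $\theta$ forces $A_0 \neq 0$.

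Now, since $\fn$ is abelian, $\ad_A$ preserves $\fn$ and its restriction to $\fn$ equals $d\phi(A_0)$. With $A_0 = (t_1, \ldots, t_s)$, formula (\ref{Equ:Lie algebra of OT}) gives
\[
d\phi(A_0) = \mathrm{diag}(t_1, \ldots, t_s, (\sum_{j=1}^s c_{ij} t_j)_{i=1}^t),
\]
whose spectrum contains the real numbers $t_1, \ldots, t_s$, at least one of which is nonzero. The Vaisman hypothesis requires $\ad_A$ to be skew-symmetric for $\met{\cdot}{\cdot}_0$; the restriction to the invariant subspace $\fn$ inherits this property, forcing its spectrum to be purely imaginary---a contradiction.

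The main obstacle is the first step: passing from an arbitrary Vaisman metric on $\Gamma \backslash G$ to a left-invariant one on $\fg$. This relies on symmetrization techniques and on the explicit structure of invariant harmonic forms on OT solvmanifolds. Once one is in the left-invariant setting, the algebraic obstruction coming from the spectrum of $d\phi(A_0)$ is essentially immediate.
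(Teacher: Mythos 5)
Your second half is fine: once you have a \emph{left-invariant} Vaisman structure on $\fg=\R^s\ltimes_{d\phi}(\R^s\oplus\C^t)$, the argument that $\theta|_{\fn}=0$, that the Lee vector has a nonzero component $A_0\in\R^s$, that $\ad_A|_{\fn}=d\phi(A_0)$ has a nonzero real eigenvalue, and that this contradicts skew-symmetry of $\ad_A$ is correct (modulo the small point that you should also exclude $\theta=0$, i.e.\ the K\"ahler case, which is easy since OT manifolds are not K\"ahler). This invariant computation is essentially the same algebra that appears in Section \ref{section:meta-abelian} of the paper, where unimodular meta-abelian Vaisman algebras are identified with $\R\times\fh_{2d+1}$, which OT-like algebras are not.

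The genuine gap is your first step, and it is exactly the step the paper does not attempt: the proposition is quoted from \cite{Kas13b} precisely because passing from an arbitrary Vaisman metric on $\Gamma\backslash G$ to a left-invariant one is not available by the symmetrization you invoke. Belgun/Fino--Grantcharov averaging preserves conditions that are \emph{linear} in $\omega$ (SKT, balanced, etc.). For LCK one can first arrange an invariant Lee form by a conformal change (using that $H^1$ of these solvmanifolds is computed by invariant forms) and then average $\omega$ against the fixed invariant $\theta$; but the Vaisman condition $\nabla\theta=0$ involves the Levi-Civita connection and hence depends nonlinearly on $g$, and it is destroyed both by the conformal rescaling and by the averaging. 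No symmetrization theorem for Vaisman metrics on solvmanifolds is provided by the results you cite, so your reduction is unsupported. Kasuya's actual proof in \cite{Kas13b} does not produce an invariant Vaisman metric at all; it exploits the structure theory of compact Vaisman manifolds (the parallel Lee field and its $J$-rotation generate a nontrivial holomorphic flow, in the sense of Ornea--Verbitsky) together with specific properties of OT manifolds, and this global, non-invariant input is what your proposal is missing. As written, your argument only proves the weaker statement that OT manifolds carry no \emph{left-invariant} Vaisman metric.
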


It is natural to ask the converse assertion to Proposition 
\ref{Prop:OT of type (s,1) admits LCK}, 
that is, 
\emph{is an OT manifold admitting an LCK metric always of type $(s,1)$?} 
The answer to this question is ``yes'' 
as shown in the following

\begin{theorem}[\cite{DV23}]\label{Thm:type of OT with LCK is (s,1)}
    An OT manifold $X(K,U)$ of type $(s,t)$ 
    admits an LCK metric if and only if $t=1$. 
\end{theorem}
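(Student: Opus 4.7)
The plan is to reduce the problem to analyzing a left-invariant LCK structure on the associated Lie group $G_C$, and then translate the LCK OT-like condition into a number-theoretic constraint that forces $t=1$.

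First, I would reduce to a left-invariant LCK metric on $G_C = \R^s \ltimes_{\phi_C}(\R^s \oplus \C^t)$. Given an LCK metric $g$ on $X(K,U) \cong \Gamma \backslash G_C$, lift its fundamental form $\omega$ and Lee form $\theta$ to the intermediate cover $\tilde X / \OO{K}$ (where $\tilde X = \Half^s \times \C^t$). The quotient $T := N/\OO{K}$ of the nilradical $N = \R^s \oplus \C^t$ by $\OO{K}$ acts on $\tilde X / \OO{K}$ as a compact torus of biholomorphic isometries. Since $T$ is compact and connected, every continuous homomorphism $T \to \R$ vanishes, so the periods of $\tilde\theta$ along $T$-orbits are zero; this forces $\tilde\theta$ to be $T$-invariant. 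Averaging $\omega$ then yields $\omega_{\mathrm{avg}} := \int_T T_t^* \omega \, dt$ with $d\omega_{\mathrm{avg}} = \tilde\theta \wedge \omega_{\mathrm{avg}}$, so it remains LCK with the same Lee form. Using the normalization $R_u T_a R_u^{-1} = T_{ua}$ for $u \in U$, $a \in \OO{K}$, the averaging commutes with the residual $U$-action, so $\omega_{\mathrm{avg}}$ stays $U$-invariant and descends to $X(K,U)$. Its lift to $G_C$ is then left-$N$-invariant; exploiting the $U$-periodicity in the remaining $\R^s$ factor and the rigidity of the LCK equation upgrades this to full left-$G_C$-invariance.

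Second, by the classification of left-invariant LCK structures on OT-like Lie groups recalled in the introduction, $G_C$ must be LCK OT-like, i.e., $\Re c_{ij} = -\tfrac{1}{2t}$ for all $i,j$. Substituting into equation \eqref{Equ:definition of c_ij} and taking absolute values, this translates precisely to
\[
|\sigma_{s+1}(u)| = |\sigma_{s+2}(u)| = \cdots = |\sigma_{s+t}(u)|, \qquad \forall\, u \in U.
\]

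Third, I would derive a contradiction when $t \geq 2$. The above equality forces $\log U \subset V \cap H$, where $V := \{ x \in \R^{s+t} : x_{s+1} = \cdots = x_{s+t}\}$ and $H$ is the Dirichlet hyperplane of Theorem \ref{Thm:Dirichlet's unit theorem}. Since $\dim (V \cap H) = s$ equals the rank of $\log U$, the subspace $V \cap H$ must be rational with respect to the $\Q$-structure on $H$ induced by $\log \unitp{K}$. Combining this rationality with the linear independence of the archimedean characters $\log|\sigma_{s+i}|$ on $\unit{K} \otimes \Q$ (which follows from Dirichlet's unit theorem applied to the field $\Q(U) \subset K$ and the fact that two distinct non-conjugate embeddings of a number field cannot have equal absolute value on a finite-index subgroup of its unit group), one concludes that two of the embeddings $\sigma_{s+1}, \ldots, \sigma_{s+t}$ must coincide up to conjugation, contradicting the labeling convention $\sigma_{s+i} = \overline{\sigma_{s+t+i}}$ that makes them pairwise inequivalent.

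The hard part is the first step: the averaging itself is clean once one observes that $[\tilde\theta]$ has trivial $T$-periods, but promoting left-$N$-invariance to full left-$G_C$-invariance is delicate because $\R^s$ does not act on $X(K,U)$, only its lattice $U$ does; this is where the meta-abelian structure of $G_C$ and the specific rigidity of the LCK equation under the closed-form constraint on $\theta$ must be brought to bear.
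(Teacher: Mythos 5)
Your overall architecture differs from the paper's: the paper does not try to symmetrize the metric at all, but instead takes the arithmetic criterion of Proposition \ref{Prop:LCK then absolute values of sigma unit is equal} (Vuletescu, Dubickas) as input and proves in the appendix the purely number-theoretic statement (Theorem \ref{Thm:LCK OT is of type (s,1)}) that a subgroup $U\subset\unit{K}$ with $\abs{\sigma_{s+1}(u)}=\cdots=\abs{\sigma_{s+t}(u)}$ for all $u\in U$ has $\rank U\le s-1$ when $t\ge 2$, contradicting admissibility. Your first step is a genuine gap. The torus averaging itself is essentially fine (although vanishing of periods does not force $\tilde\theta$ to be $T$-invariant; it only gives an invariant representative of its class, after which one must make a conformal change of $\omega$), but the promotion from $N$-invariance to full left-$G_C$-invariance is exactly the point you leave unargued: only the discrete group $U$ acts in the $\R^s$ directions, and $U$-invariance of the fiberwise-averaged form means its coefficient functions on $\R^s$ transform by the characters $u\mapsto\sigma_i(u)$, $u\mapsto\sigma_{s+j}(u)\overline{\sigma_{s+k}(u)}$ combined with translations --- not that they are constant. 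No symmetrization theorem of the kind you need is available, and the known proofs of Proposition \ref{Prop:LCK then absolute values of sigma unit is equal} extract the equal-absolute-value condition directly from this equivariance together with positivity, never producing a left-invariant metric. (You also do not address the ``if'' direction, which is Proposition \ref{Prop:OT of type (s,1) admits LCK}.)

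Even granting steps 1 and 2, your step 3 would fail, and this is precisely where the paper's appendix does its real work. The ``fact'' you invoke --- two non-conjugate embeddings cannot have equal absolute values on a finite-index subgroup of the unit group --- is true but inapplicable: for $t\ge 2$ your $U$ has rank $s$, hence infinite index in $\unit{K}$ (whose rank is $s+t-1$), and applying it to $K'=\Q(U)$ only shows that $\sigma_{s+i}\vert_{K'}$ and $\sigma_{s+j}\vert_{K'}$ agree up to conjugation, which is entirely consistent with $\sigma_{s+i}$ and $\sigma_{s+j}$ being distinct, non-conjugate embeddings of $K$; no contradiction with the labeling convention follows (and $U$ need not have finite index in $\unit{K'}$ either). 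Likewise, rationality of $V\cap H$ together with linear independence of the characters $\log\abs{\sigma_{s+i}}$ on $\unit{K}\otimes\Q$ proves nothing by itself: the common kernel of the $t-1$ differences is an honest $s$-dimensional subspace of the Dirichlet hyperplane, and ruling out that it is rational is the whole theorem, which is why this took several papers to settle. The paper supplies the missing arithmetic input: for $u$ with $\deg(u)<[K:\Q]$ one gets a coincidence $\sigma_1(u)=\sigma_i(u)$, while for $u$ of full degree one transports the relation $\sigma_{s+1}(u)\sigma_{s+t+1}(u)=\sigma_{s+2}(u)\sigma_{s+t+2}(u)$ (this is where $t\ge2$ enters) by a Galois element sending $\sigma_{s+1}(u)$ to $\sigma_1(u)$, obtaining $\sigma_1(u)\sigma_i(u)=\sigma_j(u)\sigma_k(u)$; these relations confine $L(U)$ to a finite union of proper hyperplanes of the $s$-dimensional space $V$, forcing $\rank U\le s-1$. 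Without an idea of this kind, your argument does not close.
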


This question seems to be considered since OT manifolds were first defined. 
In fact, 
it is proved in \cite{OT05} that OT manifolds of type $(1,t)$ does not 
admit any LCK metrics for all $t>1$. 
The proof is established for several cases in 
\cite{OT05}, \cite{Vul14}, and \cite{Dub14}, 
then completed for remaining cases in \cite{DV23}. 
However, 
with a slight modification of the proof in \cite{DV23}, 
we can provide a concise proof for all cases. 
The proof is included in Appendix \ref{Appendix}
as it is not directly related to the main discussion. 

Here, 
we investigate the relationship between 
the existence of LCK metrics on OT manifolds and 
their Lie algebra structures (\ref{Equ:Lie algebra of OT}). 
The following proposition 
reduces the existence of LCK metrics
to a purely arithmetical condition. 
We note that this proposition is fundamental for the proof 
of Theorem \ref{Thm:type of OT with LCK is (s,1)}. 

\begin{proposition}[\cite{Vul14}, \cite{Dub14}]
\label{Prop:LCK then absolute values of sigma unit is equal}
    Let $X(K,U)$ be an OT manifold of type $(s,t)$. 
    Then, 
    $X(K,U)$ admits an LCK metric if and only if 
    for all $u \in U$, 
    we have 
    \[
    \abs{\sigma_{s+1}(u)}= \cdots = \abs{\sigma_{s+t}(u)}. 
    \]
\end{proposition}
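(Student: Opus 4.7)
For the ``if'' direction, take the basis $(a_i)_{i=1}^s$ of $U$ and the matrix $C=(c_{ij})$ as in Lemma~\ref{Lem:definition of phi}, so that
\[
\abs{\sigma_{s+i}(a_k)}=\exp\Bigl(\sum_{j=1}^s (\Re c_{ij})\,\kappa_{jk}\Bigr).
\]
The hypothesis that this is independent of $i$, combined with invertibility of $(\kappa_{jk})$, forces $\Re c_{ij}$ to depend only on $j$; the unimodularity constraint \eqref{Equ:sum of c_ij is -1/2} then pins it to $\Re c_{ij}=-1/(2t)$. This is the LCK OT-like condition on $G_C$, so the left-invariant LCK metric on $G_C$ recalled in the introduction descends to $X(K,U)\simeq\Gamma\backslash G_C$.

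For the ``only if'' direction, lift an LCK metric $\omega$ with Lee form $\theta$ to the universal cover $\Half^s\times\C^t$. Since the cover is simply connected, $\pi^*\theta = df$ for some $f$, and $\tilde\omega := e^{-f}\pi^*\omega$ is K\"ahler and satisfies an automorphy relation
\[
\gamma^*\tilde\omega = \chi(\gamma)\,\tilde\omega,\qquad \gamma\in U\ltimes\OO{K},
\]
where $\chi(\gamma) = e^{f\circ\gamma - f}$ defines a homomorphism $\chi\colon U\ltimes\OO{K}\to\R_{>0}$. A direct computation of the commutator in $U\ltimes\OO{K}$ yields $[u,a] = (u-1)a\in\OO{K}$, so $\chi$ annihilates $(u-1)\OO{K}$ for every nontrivial $u\in U$; since multiplication by $u-1\neq 0$ is an injective $\Z$-linear endomorphism of the free abelian group $\OO{K}$, the image $(u-1)\OO{K}$ has finite index, and torsion-freeness of $\R_{>0}$ then forces $\chi|_{\OO{K}}=1$. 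Consequently $\tilde\omega$ is invariant under the translation action of $\OO{K}$.

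Writing $\tilde\omega = \iu\sum_{i,j} H_{ij}\,dz_i\wedge d\bar z_j + (\text{$w$-terms})$ and comparing with $R_u^*\tilde\omega=\chi(u)\tilde\omega$ gives the scaling law
\[
H_{ij}\circ R_u = \frac{\chi(u)}{\sigma_{s+i}(u)\,\overline{\sigma_{s+j}(u)}}\,H_{ij},
\]
with $(H_{ij})$ a positive definite Hermitian matrix at every point. To extract the moduli constraint, I would follow Vulcu \cite{Vul14} and Dubickas \cite{Dub14}: on a generic $\C^t$-fibre, $\tilde\omega^t$ scales under $R_u$ by $\chi(u)^t$, while the Euclidean form $\iu^t\bigwedge_j dz_j\wedge d\bar z_j$ scales by $\prod_j\abs{\sigma_{s+j}(u)}^2$, and Hadamard's inequality $\det(H_{ij})\le\prod_i H_{ii}$ integrated over the compact torus fibre $T^{s+2t}$ yields a chain of comparisons between $\chi(u)^t$ and $\prod_j\abs{\sigma_{s+j}(u)}^2$ holding for every $u\in U$.

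The main obstacle is converting this integrated inequality into the pointwise equality $\abs{\sigma_{s+1}(u)}=\cdots=\abs{\sigma_{s+t}(u)}$. This rests on a plurisubharmonic maximum-principle analysis of the K\"ahler potential of $\tilde\omega$ restricted to a $\C^t$-fibre, which forbids any nontrivial multiplicative character on $\C^t$ from being absorbed by a positive plurisubharmonic function of slower growth; I would adapt the Vulcu--Dubickas argument here essentially verbatim.
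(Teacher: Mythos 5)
The paper does not prove this proposition at all: it is imported from the literature (cited to \cite{Vul14}, \cite{Dub14}, with the general case completed in \cite{DV23}), so there is no internal proof to match yours against, and your proposal must stand on its own. Your ``if'' direction does stand: the computation $\abs{\sigma_{s+i}(a_k)}=\exp\bigl(\sum_j(\Re c_{ij})\kappa_{jk}\bigr)$, invertibility of $(\kappa_{jk})$, and the unimodularity relation (\ref{Equ:sum of c_ij is -1/2}) correctly force $\Re c_{ij}=-1/(2t)$, and then the standard metric of Proposition \ref{Prop:LCK metric on LCK OT-like algebras} descends through the identification $X(K,U)\simeq\Gamma\backslash G_C$. (This is in fact the same equivalence the paper itself records in the proposition immediately following, just run in the opposite direction.) Your reductions in the ``only if'' direction are also sound as far as they go: the exactness of $\pi^*\theta$ on the simply connected cover, the automorphy character $\chi$, the commutator identity $[u,a]=(u-1)a$, and the finite-index plus torsion-freeness argument forcing $\chi|_{\OO{K}}=1$ are all correct, as is the scaling law for $H_{ij}$ under $R_u$.

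The genuine gap is that the entire content of the hard direction --- passing from $\OO{K}$-invariance and the transformation law $H_{ii}\circ R_u=\chi(u)\abs{\sigma_{s+i}(u)}^{-2}H_{ii}$ to the equalities $\abs{\sigma_{s+1}(u)}=\cdots=\abs{\sigma_{s+t}(u)}$ --- is not carried out. Your last two paragraphs gesture at ``a chain of comparisons'' from Hadamard's inequality and then explicitly defer the decisive step (``a plurisubharmonic maximum-principle analysis \dots I would adapt the Vulcu--Dubickas argument here essentially verbatim''), which is precisely the part the proposition asserts and precisely what \cite{Vul14}, \cite{Dub14}, \cite{DV23} had to work to establish; citing it verbatim makes the argument circular as a proof of the statement. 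Moreover, the intermediate claim as phrased is shaky: $R_u$ does not preserve the $\C^t$-fibres (it moves the base point $w\in\Half^s$), so ``$\tilde\omega^t$ scales on a generic fibre by $\chi(u)^t$'' needs the averaging over the $\OO{K}$-torus directions to be set up first so that the averaged coefficients depend only on $\Im w$, and one must then explain how positivity of the averaged potential along the expanding and contracting directions of $R_u$ rules out $\abs{\sigma_{s+i}(u)}\neq\abs{\sigma_{s+j}(u)}$. Until that analytic step is written out, you have proved only the easy implication and a correct but incomplete reduction of the other.
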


The following proposition shows that 
we can determine the existence of LCK metrics 
from the Lie algebra structures. 

\begin{proposition}
    Let $X(K,U)$ be an OT manifold and $(a_i)_{i=1}^s$ be 
    a basis of $U$. 
    Define a matrix
    $C=(c_{ij})_{ij} \in \Mat_{t \times s}(\C)$ 
    so that it satisfies (\ref{Equ:definition of c_ij}). 
    Then $X(K,U)$ admits an LCK metric if and only if 
    $C$ satisfies 
    \begin{equation}
    \Re c_{ij} =-\frac{1}{2t}
    \end{equation}
    for all $1\le i\le t$ and $1 \le j \le s$. 
\end{proposition}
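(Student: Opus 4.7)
The plan is to reduce the statement to the arithmetic criterion in Proposition \ref{Prop:LCK then absolute values of sigma unit is equal} and then translate the equality of absolute values $|\sigma_{s+i}(u)|$ into a linear condition on the real parts of the $c_{ij}$, which together with the unimodularity relation (\ref{Equ:sum of c_ij is -1/2}) pins down the value $-1/(2t)$.

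First, I would observe that for each fixed $i$ the map $u\mapsto |\sigma_{s+i}(u)|$ is a group homomorphism $U\to\R_{>0}$. Therefore the condition $|\sigma_{s+1}(u)|=\cdots=|\sigma_{s+t}(u)|$ for every $u\in U$ is equivalent to the same equality holding on the basis $(a_k)_{k=1}^s$. Using the defining relation (\ref{Equ:definition of c_ij}) and the fact that the $\kappa_{jk}$ are real, one obtains
\[
\abs{\sigma_{s+i}(a_k)}=\exp\!\Big(\sum_{j=1}^s (\Re c_{ij})\,\kappa_{jk}\Big),
\]
so the LCK condition becomes $\sum_{j}(\Re c_{ij})\kappa_{jk}=\sum_{j}(\Re c_{i'j})\kappa_{jk}$ for all $1\le i,i'\le t$ and all $1\le k\le s$.

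Next, since $(\kappa_{jk})_{jk}\in\Mat_{s\times s}(\R)$ is invertible (it is the matrix of $\Z$-basis vectors of the lattice $\pr_{\R^s}(\log U)$ in $\R^s$, as noted in the proof of Lemma \ref{Lem:definition of phi}), the system of equalities above forces the rows of the real part $\Re C\in\Mat_{t\times s}(\R)$ to be identical: there exists a vector $(r_1,\ldots,r_s)\in\R^s$ with $\Re c_{ij}=r_j$ for all $i,j$. Combining this with the unimodularity identity (\ref{Equ:sum of c_ij is -1/2}), namely $\sum_{i=1}^t \Re c_{ij}=-1/2$ for each $j$, yields $t\cdot r_j=-1/2$, i.e.\ $\Re c_{ij}=-1/(2t)$.

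For the converse I simply run this computation backwards: if $\Re c_{ij}=-1/(2t)$ for all $i,j$ then $\abs{\sigma_{s+i}(a_k)}=\exp\!\big(-\tfrac{1}{2t}\sum_{j}\kappa_{jk}\big)$ is independent of $i$, so $\abs{\sigma_{s+1}(a_k)}=\cdots=\abs{\sigma_{s+t}(a_k)}$ on the basis, hence on all of $U$ by multiplicativity; Proposition \ref{Prop:LCK then absolute values of sigma unit is equal} then gives the LCK metric. There is no real obstacle here: the only subtle point is the invocation of the invertibility of $(\kappa_{jk})$ to decouple the indices $i$ and $k$, which is built into the admissibility of $U$.
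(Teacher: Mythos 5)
Your proposal is correct and follows essentially the same route as the paper: reduce to the arithmetic criterion of Proposition \ref{Prop:LCK then absolute values of sigma unit is equal}, translate the equality of the $\abs{\sigma_{s+i}(a_k)}$ via (\ref{Equ:definition of c_ij}) into the condition $\sum_j(\Re c_{ij})\kappa_{jk}$ being independent of $i$, use the invertibility of $(\kappa_{jk})$ to conclude $\Re c_{1j}=\cdots=\Re c_{tj}$, and then apply the unimodularity relation (\ref{Equ:sum of c_ij is -1/2}) to obtain $\Re c_{ij}=-1/(2t)$. Your explicit remarks on the multiplicativity reduction to the basis and on the converse direction are minor elaborations of steps the paper treats implicitly, not a different argument.
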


\begin{proof}
    From Proposition \ref{Prop:LCK then absolute values of sigma unit is equal}, 
    the existence of LCK metrics is equivalent to that 
    $(a_i)_{i=1}^s$ satisfies 
    \[
    \log\abs{\sigma_{s+1}(a_k)}=\cdots=\log\abs{\sigma_{s+t}(a_k)}
    \]
    for all $1\le k \le s$. 
    By the definition of $C$, 
    this is equivalent to $C$ satisfying the equation 
    \[
    \sum_{j=1}^s (\Re{c_{1j}}) \kappa_{jk} = \cdots 
    =\sum_{j=1}^s (\Re{c_{tj}}) \kappa_{jk}
    \]
    for all $1\le k \le s$, 
    where $(\kappa_{ij})_{ij} \in \Mat_{s \times s}(\R)$ is a invertible 
    matrix defined in the proof of Lemma \ref{Lem:definition of phi}. 
    The property that all components in each column of a matrix are equal 
    is preserved when multiplied on the right by an invertible matrix. 
    Thus we have
    \[
    \Re c_{1j} =\cdots =\Re c_{tj}
    \]
    for all $1\le j \le s$. 
    From the equation (\ref{Equ:sum of c_ij is -1/2}), 
    we have
    \[
    \Re c_{ij}=\frac{1}{t} \sum_{i=1}^t \Re c_{ij}  
    =- \frac{1}{2t}. 
    \]
\end{proof}

The Lie algebra structure of an OT manifold is expressed by 
using a matrix $C \in \Mat_{t\times s}(\C)$, 
which is constructed arithmetically, 
as in Proposition \ref{Prop:Lie algebra of OT}. 
However, 
to define an LCK metric on the Lie algebra, 
$C$ does not need to be constructed arithmetically. 

\begin{definition}\label{Def:OT-like}
    Let $C=(c_{ij})_{ij} \in \Mat_{t\times s}(\C)$ be a complex matrix 
    satisfying
    \[
    \Re c_{1j}+\cdots+\Re c_{tj}=-\frac{1}{2}
    \]
    for all $1\le j \le s$. 
    We define a unimodular meta-abelian Lie algebra
    $\fg _C=\R^s \ltimes_{d\phi_C}(\R^s \oplus\C^t)$ 
    where the map $d\phi_C : \R^s \to \End(\R^s \oplus \C^t)$ 
    is the following: 
    \begin{equation}
    d\phi_C(t_1, \ldots ,t_s) = 
    \mathrm{diag} (t_1, \ldots ,t_s,(\sum_{j=1}^s c_{ij} t_j)_{i=1}^t). 
    \end{equation}
    We call a Lie algebra \emph{OT-like} of type $(s,t)$ if it is 
    isomorphic to $\fg_C$ 
    for some matrix $C \in \Mat_{t\times s}(\C)$. %for? 
    Moreover, 
    we call it \emph{LCK OT-like} if we can take the matrix $C$ so that 
    \[
    \Re{c_{ij}}=-\frac{1}{2t}
    \]
    for all $1\le i\le t$ and $1 \le j \le s$. 
    We also call a meta-abelian Lie group 
    $G_C=\R^s \ltimes_{\phi_C}(\R^s \oplus\C^t)$ 
    OT-like (LCK OT-like) of type $(s,t)$, 
    where $\phi_C=\exp(d\phi_C)$. 
\end{definition}

Now we define LCK metrics on LCK OT-like Lie algebras. 
Note that we are also considering cases other than of type $(s,1)$. 

\begin{proposition}
[\cite{Kas13b}]
\label{Prop:LCK metric on LCK OT-like algebras}
    Let $C=(c_{ij})_{ij} \in \Mat_{t\times s}(\C)$ be a complex matrix 
    satisfying 
    \[
    \Re{c_{ij}}=-\frac{1}{2t}
    \]
    for all $1\le i\le t$ and $1 \le j \le s$. 
    Then $\mathfrak{g}_C$ 
    admits an LCK structure $(\mathfrak{g}_C,J_C,\met{\cdot}{\cdot}_C)$ 
    defined by the followings. 
    Denote the standard basis of $\mathfrak{g}_C$ by 
    $u_1, \ldots,u_s \in \R^s$ and 
    $v_1,\ldots,v_s,(x_1+\sqrt{-1}y_1),\ldots ,(x_t+\sqrt{-1}y_t) \in \R^s \oplus \C^t$. 
    For the dual basis, 
    the indices are written as superscripts, 
    such as $u^i$. 
    
    \begin{itemize}
        \item $J_Cu_i=v_i$ and $J_Cx_j=y_j$ 
        for all $1 \le i \le s$ and $1 \le j \le t$. 
        \item The fundamental $2$-form $\omega_C$, 
        which defines the inner product $\met{\cdot}{\cdot}_C$, 
        is defined as follows: 
        \[
        \omega_C= t\sum_{i=1}^s u^i \w v^i + \sum_{i,j=1}^s u^i \w v^j 
        + \sum_{i=1}^t x^i \w y^i. 
        \]
        \item The Lee form $\theta_C$ is 
        \[
        \theta_C= \frac{1}{t} (u^1 + \cdots + u^s). 
        \]
    \end{itemize}
\end{proposition}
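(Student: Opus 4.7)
The plan is to verify the four conditions directly in the Chevalley--Eilenberg complex of $\fg_C$. Since $\R^s \oplus \C^t$ is abelian, the only non-trivial brackets are those involving some $u_i$, read off from the diagonal matrix $d\phi_C(u_i) = \mathrm{diag}(\delta_{1i}, \ldots, \delta_{si}; c_{1i}, \ldots, c_{ti})$, where $c_{li} = -\tfrac{1}{2t} + \sqrt{-1}\,\Im c_{li}$ acts on the $l$-th copy of $\C$ by complex multiplication. Thus
\[
[u_i, v_j] = \delta_{ij}\, v_i, \quad [u_i, x_l] = -\tfrac{1}{2t}\, x_l + (\Im c_{li})\, y_l, \quad [u_i, y_l] = -(\Im c_{li})\, x_l - \tfrac{1}{2t}\, y_l,
\]
and all other brackets among basis vectors are zero. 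Dualising via $d\alpha(X,Y) = -\alpha([X,Y])$ gives $du^i = 0$, $dv^k = -u^k \w v^k$, and explicit formulas for $dx^k, dy^k$ as linear combinations of $u^i \w x^k$ and $u^i \w y^k$ with coefficients $\pm 1/(2t)$ and $\pm \Im c_{ki}$.

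From here, $d\theta_C = 0$ is immediate since $\theta_C$ is a linear combination of the $u^i$. For positive-definiteness and $J_C$-invariance of $\met{\cdot}{\cdot}_C$, I would compute $g_C(X,Y) = -\omega_C(J_C X, Y)$ pair by pair: the Gram matrix splits as the orthogonal sum of the block $t I_s + \mathbf{1}\mathbf{1}^{\top}$ (eigenvalues $t$ with multiplicity $s-1$ and $t+s$, hence positive definite) on each of $\mathrm{span}(u_i)$ and $\mathrm{span}(v_i)$, together with the standard inner product on each $\mathrm{span}(x_j, y_j)$; $J_C$-invariance is then immediate from this block form. For integrability of $J_C$, I would verify the vanishing of the Nijenhuis tensor pair by pair. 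The cases internal to $\mathrm{span}(u_i, v_j)$ follow from $[u_i, v_j] = \delta_{ij} v_i$, the cases internal to $\mathrm{span}(x_j, y_j)$ are trivial, and the mixed cases reduce to the fact that $d\phi_C(u_i)$ restricts to complex multiplication by $c_{li}$ on the $l$-th $\C$-factor and hence commutes with $J_C$ there.

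The LCK identity $d\omega_C = \theta_C \w \omega_C$ is the main calculation. Using $dv^k = -u^k \w v^k$, the first summand $t\sum_i u^i \w v^i$ of $\omega_C$ has vanishing differential (because $u^i \w u^i = 0$), while the second summand contributes $\sum_{i,j} u^i \w u^j \w v^j$ to $d\omega_C$; and a short computation gives $\sum_i d(x^i \w y^i) = \tfrac{1}{t} \sum_{i,k} u^k \w x^i \w y^i$, the $\Im c_{ki}$-terms cancelling in pairs between $dx^i \w y^i$ and $-x^i \w dy^i$. On the other side, expanding $\theta_C \w \omega_C = \tfrac{1}{t}\bigl(\sum_i u^i\bigr) \w \omega_C$ reproduces exactly these two contributions, together with an extra piece $\tfrac{1}{t}\sum_{i,j,k} u^i \w u^j \w v^k$ which vanishes because $\bigl(\sum_i u^i\bigr) \w \bigl(\sum_j u^j\bigr) = 0$. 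The main obstacle is purely bookkeeping with indices and signs; once this last identity is isolated as the source of the cancellation, the LCK relation follows.
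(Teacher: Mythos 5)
Your proposal is correct and follows essentially the same route as the paper: a direct verification in the Chevalley--Eilenberg complex, with the ``easy'' conditions (positive definiteness, $J_C$-invariance, integrability, $d\theta_C=0$) checked on the basis and the main work being the identity $d\omega_C=\theta_C\w\omega_C$, where the first summand of $\omega_C$ is closed, the second produces $\theta_C\w\bigl(t\sum_i u^i\w v^i\bigr)$, the cross term $\theta_C\w\sum_{i,j}u^i\w v^j$ vanishes since $\bigl(\sum_i u^i\bigr)\w\bigl(\sum_j u^j\bigr)=0$, and the condition $\Re c_{ij}=-\tfrac{1}{2t}$ enters only through $d\bigl(\sum_i x^i\w y^i\bigr)$. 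The only cosmetic difference is that the paper carries out that last computation in complex coordinates $z^i=\tfrac12(x^i-\sqrt{-1}y^i)$, whereas you work with $x^i,y^i$ directly and observe that the $\Im c_{ij}$-terms drop out.
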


We call this metric the \emph{standard metric} on $\mathfrak{g}_C$. 
The corresponding left-invariant LCK metric $g_C$ 
on $G_C$ is also called the standard metric. 

\begin{proof}
    It is easy to check that $\met{\cdot}{\cdot}_C$ is 
    $J_C$-invariant and positive definite, 
    $J_C$ is integrable and 
    the Lee form $\theta_C$ is closed. 
    We will show that $d\omega_C=\theta_C \w \omega_C$. 
    We define $z_i=x_i+\sqrt{-1}y_i$ and $z^i=\frac{1}{2}(x^i-\sqrt{-1}y^i)$. 
    Then, 
    the derivatives for the basis are summarized as follows: 
    \[
    du^i=0, \quad
    dv^i=-u^i \w v^i, \quad
    dz^i= \left( -\sum_{j=1}^s c_{ij} u^j \right) \w z^i. 
    \]
    The derivative $d\omega_C$ is calculated as follows: 
    \begin{align*}
        d\omega_C
        &= -t\sum_{i=1}^s u^i \w dv^i - \sum_{i,j=1}^s u^i \w dv^j 
        + d \left( \sum_{i=1}^t x^i \w y^i \right)  \\
        &= -t\sum_{i=1}^s u^i \w (-u^i \w v^i) - \sum_{i,j=1}^s u^i \w (-u^j \w v^j) 
        + d \left( -2\sqrt{-1}\sum_{i=1}^t z^i \w \conj{z}^i \right) \\
        &= 0 + \theta_C \w \left( t\sum_{i=1}^s u^i \w v^i \right) 
        - 2\sqrt{-1} \sum_{i=1}^t 
        \left( -2\sum_{j=1}^s (\Re c_{ij})u^j \right) \w z^i \w \conj{z}^i \\
        &= \theta_C \w \left( t\sum_{i=1}^s u^i \w v^i \right) + 
        \theta_C \w \left( \sum_{i=1}^t x^i \w y^i \right). 
    \end{align*} 
    Since we have 
    \[
        \theta_C \w \left( \sum_{i,j=1}^s u^i \w v^j \right) 
        =\frac{1}{t} \sum_{i,j,k=1}^s u^k \w u^i \w v^j =0, 
    \]
    it is proved that $d\omega_C=\theta_C \w \omega_C$. 
\end{proof}

\section{Construction of OT manifolds from Lie groups
and their lattices}
\label{section:Construction of OT manifolds from Lie groups
and their lattices}

In the previous section, 
we constructed OT manifolds and Lie groups associated with them. 
The construction of OT manifolds can be seen 
as providing a method to construct a matrix $C \in \Mat_{t \times s}(\C)$ 
such that an OT-like Lie group $G_C$ admits a lattice. 
In this section, 
we consider the conditions on $C$ 
under which a Lie group $G_C$ admits a lattice. 

As shown in Lemma \ref{Lem:nilradical of OT}, 
the nilradical of an OT-like Lie group
$\R^s \ltimes_{\phi_C} (\R^s \oplus \C^t)$ is 
$\R^s \oplus \C^t$. 
So we can consider the simplicity of lattices 
(Definition \ref{Def:lattice of simple type}) on OT-like 
Lie groups. 
The next theorem shows that we can conduct 
the ``converse'' construction provided that 
we only consider lattices of simple type. 

\begin{theorem}
\label{Thm:OT-like Lie group with simple lattice is constructed by OT}
    Let $G$ be an OT-like Lie group of type $(s,t)$. 
    If $G$ admits a lattice of simple type, 
    $G$ is associated to some OT manifold $X(K,U)$ 
    of type $(s,t)$ and of simple type. 
\end{theorem}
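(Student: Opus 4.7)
The plan is to reverse-engineer a number-theoretic datum $(K,U)$ from the Lie-algebraic datum $(G_C,\Gamma)$, closely following the outline in the introduction. First I apply Proposition \ref{Prop:meta-abelian with a lattice admits splitting a lattice} (using Lemma \ref{Lem:nilradical of OT} to identify the nilradical as $\R^s \oplus \C^t$) to replace the given lattice of simple type by one of the form $\Gamma = \Gamma_1 \ltimes_{\phi_C}(P^{-1}\Z^n)$, where $n = s+2t$, $\Gamma_1 \subset \R^s$ is a lattice, and $P \in \GL(n,\R)$ satisfies $P\phi_C(x)P^{-1} \in \SL(n,\Z)$ for every $x \in \Gamma_1$. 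Simplicity passes to this new lattice, so without loss of generality I work in the split form.

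Next I define the free abelian subgroup $U := \{P\phi_C(x)P^{-1} \mid x \in \Gamma_1\} \subset \SL(n,\Z)$ of rank $s$ and the commutative $\Q$-subalgebra $K := \Q[U] \subset \Mat(n,\Q)$. Setting $\Gamma_0 := \Gamma \cap (\R^s \oplus \C^t) = P^{-1}\Z^n$, the linear isomorphism $P \colon \Gamma_0 \otimes_{\Z}\Q \to \Q^n$ intertwines the adjoint action of $\Gamma$ on $\Gamma_0\otimes\Q$ with the tautological $K$-action on $\Q^n$, so this $K$-action is irreducible by the simplicity assumption. A commutative subalgebra of $\End_\Q(\Q^n)$ acting irreducibly on $\Q^n$ must be a field (Schur-type argument: for any nonzero $a\in K$, the submodule $aV$ is nonzero, hence equals $V$, so $a$ is invertible); irreducibility then forces $\Q^n$ to be one-dimensional over $K$, giving $[K:\Q] = n$.

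To realize $K$ as a number field with signature $(s,t)$ and $U$ as an admissible subgroup of $\unitp{K}$, I examine eigenvalues. The diagonal block structure of $\phi_C$ makes the elements of $U$ simultaneously $\C$-diagonalizable, and the eigenvalues of $u = P\phi_C(x)P^{-1}$ are $e^{x_1},\ldots,e^{x_s}$ (real and positive) together with $\exp(\sum_j c_{ij}x_j)$ and their complex conjugates for $i=1,\ldots,t$. Each eigenvalue assignment extends to a $\Q$-algebra homomorphism $K \to \C$, yielding $s$ real embeddings $\sigma_1,\ldots,\sigma_s$ and $t$ conjugate pairs $\sigma_{s+i},\sigma_{s+t+i}$; these exhaust $\Hom_\Q(K,\C)$ since $[K:\Q]=n$. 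Integrality $U \subset \SL(n,\Z)$ gives $U \subset \unit{K}$, positivity of the real eigenvalues gives $U \subset \unitp{K}$, and admissibility follows because $\pr_{\R^s}\circ\log$ recovers, up to branch choice, the inclusion $\Gamma_1 \hookrightarrow \R^s$ whose image is a lattice. Finally, the matrix reconstructed from $(K,U)$ via equation (\ref{Equ:definition of c_ij}) agrees with the defining $C$ (by Lemma \ref{Lem:definition of phi}), so $G = G_C$ is the Lie group associated to $X(K,U)$, and $\Q(U) = \Q[U] = K$ shows that this OT manifold is of simple type.

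The main obstacle is the field-extraction step: proving that $K = \Q[U]$ is a field of degree precisely $n$ is the logical crux, since this is where the arithmetic object (a number field) must emerge from the purely topological/algebraic datum of a simple lattice in a diagonalizable solvable Lie group. Three ingredients conspire here: commutativity of $U$ (so $K$ is commutative and semisimple), irreducibility inherited from simplicity (promoting semisimplicity to fieldhood), and the $\C$-diagonalizability of $\phi_C$ (producing the expected signature $(s,t)$ from the eigenvalue data). Without the simplicity hypothesis, $K$ would in general split as a product of number fields and the eigenvalue counts could not be reassembled into a single coherent embedding pattern, explaining why simple type is the natural hypothesis.
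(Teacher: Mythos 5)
Your proposal is correct and follows essentially the same route as the paper: split the lattice via Proposition \ref{Prop:meta-abelian with a lattice admits splitting a lattice}, form $U=\{P\phi_C(x)P^{-1}\}$ and $K=\Q[U]\subset\Mat(n,\Q)$, use simplicity to show $K$ is a field with $[K:\Q]=n$, read off the signature $(s,t)$ and the embeddings from the eigenvalue data under conjugation by $P$, and then recover $\phi_C$ and admissibility of $U\subset\unitp{K}$. The only cosmetic differences are that you establish fieldhood by a direct Schur-type argument (the paper goes through reducedness of the Artinian ring $K$ and rules out a product of several fields using irreducibility) and you compress the identification of eigenvalue projections with the archimedean embeddings, which the paper spells out via the isomorphism $K\otimes_{\Q}\R\simeq\R^{s}\times\C^{t}$, Minkowski theory, and Lemma \ref{Lem:isomorphism of F-algebra}.
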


We may assume that 
$G=G_C=\R^s \ltimes_{\phi_C} (\R^s \oplus \C^t)$ 
for some matrix $C \in \Mat_{t\times s}(\C)$. 
Firstly, 
we simplify the condition that $G$ has a lattice 
of simple type. 
According to Proposition 
\ref{Prop:meta-abelian with a lattice admits splitting a lattice}, 
we can take a lattice $\Gamma \subset \R^s$ and $P \in \GL(n,\R)$ such that 
\[
P (\phi_C(x))P^{-1} \in \SL(n,\Z), \quad \text{for all} \;x \in \Gamma, 
\]
where $n=s+2t$. 
Now we define a subgroup of $\SL(n,\Z)$ as follows: 
\[
U \coloneqq \left\{
P (\phi_C(x))P^{-1} \in \SL(n,\Z)\, \mid \, 
x \in \Gamma \right\}. 
\]
The group $U \subset \SL(n,\Z)$ is a free abelian subgroup of rank $s$. 
Moreover, 
since $\phi_C(x)$ is diagonal, 
we can write as 
\[
P^{-1}AP = \text{diag} 
(\mu^1_A, \ldots \mu^s_A,\lambda^1_A, \ldots ,\lambda^t_A)
\]
for $A \in U$, 
where $\mu^i_A>0$ and $\lambda^i_A \in \C$. 
Since the coordinate expression of 
$x \in \R^s$ is $(\log\mu^1_A, \ldots ,\log\mu^s_A)$ where 
$A=P(\phi_C(x))P^{-1}$, 
we can say that 
$\left\{ (\log\mu^1_A, \ldots ,\log\mu^s_A) 
\in \R^s \, \mid \, A \in U \right\}$ 
forms a lattice in $\R^s$. 
The condition that the lattice $\Gamma \ltimes_{\phi_C} (P^{-1}\Z^n)$ 
is of simple type is equivalent to the irreducibility of the 
representation of $U$ on $\Q^n$. 

Now we can define a $\Q$-algebra $K \coloneqq \Q[U] \subset \Mat(n,\Q)$. 

\begin{lemma}\label{Lem:K is reduced}
    $K$ is a reduced ring. 
\end{lemma}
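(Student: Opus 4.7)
The plan is to show that every element of the commutative $\mathbb{Q}$-algebra $K$ is diagonalizable over $\mathbb{C}$; then no nonzero element can be nilpotent, so $K$ is reduced.

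First I would observe that all matrices $A \in U$ commute, since $A = P\phi_C(x)P^{-1}$ with $x \in \Gamma$, and $\phi_C(\Gamma) \subset \phi_C(\mathbb{R}^s)$ is an abelian subgroup. Next, each $\phi_C(x)$ acts on $\mathbb{R}^s \oplus \mathbb{C}^t$ by the diagonal matrix $\operatorname{diag}(t_1,\dots,t_s,(\sum_j c_{ij}t_j)_{i=1}^t)$ (in the complex sense); after complexifying $\mathbb{R}^s \oplus \mathbb{C}^t$ to an $n$-dimensional complex vector space, $\phi_C(x)$ becomes genuinely diagonal with $n = s+2t$ complex eigenvalues (the real part $e^{t_i}$ and conjugate pairs from the complex part). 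Consequently each $A = P\phi_C(x)P^{-1} \in U$ is diagonalizable over $\mathbb{C}$.

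Since a commuting family of $\mathbb{C}$-diagonalizable matrices is simultaneously diagonalizable, I can choose $Q \in \operatorname{GL}(n,\mathbb{C})$ such that $Q^{-1}AQ$ is diagonal for every $A \in U$. Then every $\mathbb{Q}$-linear combination and every product of elements of $U$ is also diagonalized by $Q$, so conjugation by $Q$ realizes an injective ring homomorphism
\[
K = \mathbb{Q}[U] \;\hookrightarrow\; \{ \text{diagonal matrices in } \operatorname{Mat}(n,\mathbb{C}) \} \;\cong\; \mathbb{C}^n.
\]
Since $\mathbb{C}^n$ has no nonzero nilpotent elements, neither does $K$, which is exactly the statement that $K$ is reduced.

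I do not expect a real obstacle here; the only subtle point is that the diagonalizing matrix $P$ is chosen in $\operatorname{GL}(n,\mathbb{R})$ in order to make $P\phi_C(x)P^{-1}$ integral, but a full eigen-diagonalization requires passing to a complex $Q$ when the $c_{ij}$ are not real. Once this is noted, the argument is completely formal and avoids any use of the simple-type hypothesis, which will only enter in later lemmas when upgrading ``$K$ is reduced'' to ``$K$ is a field''.
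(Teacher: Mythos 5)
Your proof is correct and follows essentially the same route as the paper: both show that all of $\Q[U]$ is simultaneously conjugated into a product of fields (the paper uses $P$ itself over $\R$, landing in $\R^s\oplus\C^t\subset\Mat(n,\R)$, while you pass to $\C$ and invoke simultaneous diagonalization of the commuting family $U$ to land in $\C^n$), and then conclude reducedness from the absence of nilpotents there. The observation that the simple-type hypothesis is not needed at this step also matches the paper, where it enters only in the later lemmas.
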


\begin{proof}
    We consider an isomorphism of $\R$-algebras  
    $\tilde{\rho} \colon \Mat(n,\R) \to \Mat(n,\R)$ 
    which sends $A$ to $P^{-1}AP$, 
    and let $\rho$ be the restriction of 
    $\tilde{\rho}$ to $K \otimes_{\Q} \R$. 
    Since all matrices in 
    $K \otimes_{\Q} \R = \R[U]$ are diagonalized by $\rho$, 
    we obtain the following diagram: 
    \[
    \begin{array}{c@{\hspace{1cm}}c}
        \begin{tikzcd}%[row sep=large, column sep=large]
            \tilde{\rho}\, \colon \Mat(n,\R) \arrow[r,  "\sim"] 
            \arrow[d, phantom, "\cup" description]
            & \Mat(n,\R) \arrow[d, phantom, "\cup" description]  \\
            \rho\, \colon K \otimes_{\Q} \R \arrow[r, hookrightarrow] 
            & \R^s \oplus \C^t, 
        \end{tikzcd}
        &
        A \mapsto P^{-1}AP, 
    \end{array}
    \]
    where we consider $\R^s \oplus \C^t$ as a subalgebra of 
    $\Mat(n,\R)$ through the 
    inclusion $(\mu^1, \ldots ,\mu^s,\lambda^1, \ldots ,\lambda^t) 
    \mapsto \text{diag}(\mu^1, \ldots ,\mu^s,\lambda^1, \ldots ,\lambda^t)$. 
    As $\R^s \oplus \C^t$ is reduced, 
    it follows that the ring $K \otimes_{\Q} \R$ is reduced and so is $K$. 
\end{proof}

We write $\rho(A) = 
(\mu^1_A, \ldots \mu^s_A,\lambda^1_A, \ldots ,\lambda^t_A)$ 
for a matrix $A \in \R[U]$, 
which is compatible with the notation for $A \in U$. 

\begin{lemma}\label{Lem:K is a field}
    $K$ is a field. 
\end{lemma}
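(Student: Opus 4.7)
The plan is to exploit the fact that $K$ is a commutative, reduced, finite-dimensional $\Q$-algebra, and combine this with the simplicity of the lattice. Such rings are very restricted in structure: every commutative reduced Artinian (in particular, finite-dimensional over a field) ring is isomorphic to a finite product of fields. So I would aim to rule out the possibility of more than one factor using the irreducibility of the $U$-action on $\Q^n$.

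More concretely, first I would record that $K \subset \Mat(n,\Q)$ is finite-dimensional over $\Q$ and commutative (since it is generated by the commuting family $U$), and by Lemma \ref{Lem:K is reduced} it is reduced. Hence $K$ decomposes as a finite product of fields $K \simeq F_1 \times \cdots \times F_r$, with corresponding mutually orthogonal idempotents $e_1, \ldots, e_r \in K$ summing to $1$. These idempotents, viewed as elements of $\Mat(n,\Q)$, produce a decomposition $\Q^n = e_1 \Q^n \oplus \cdots \oplus e_r \Q^n$ into $K$-submodules. Since $U \subset K$, each summand $e_i \Q^n$ is a $U$-invariant $\Q$-subspace.

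Next I would invoke simplicity: the representation of $U$ on $\Q^n$ is irreducible, so at most one $e_i \Q^n$ can be nonzero. But $e_i \ne 0$ implies $e_i \Q^n \ne 0$ (because $e_i$ is a nonzero matrix in $\Mat(n,\Q)$), and for an idempotent $e_i$ with $e_i \Q^n = \Q^n$ one has $e_i = \id$. It follows that $r = 1$ and $K$ is a field.

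The main subtlety, which is not really a genuine obstacle but a point to be careful about, is making sure the decomposition forced by idempotents is \emph{non-trivial on $\Q^n$}, not merely non-trivial inside $K$ as an abstract algebra; this is precisely what allows the passage from the algebraic decomposition of $K$ to a $U$-invariant decomposition of $\Q^n$. The rest is formal once one has the structure theorem for commutative reduced Artinian rings at hand.
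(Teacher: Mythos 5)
Your proof is correct and takes essentially the same approach as the paper: both decompose the commutative, reduced, finite-dimensional $\Q$-algebra $K$ into a product of fields and use the resulting orthogonal idempotents to produce a nontrivial $U$-invariant subspace of $\Q^n$, contradicting the simplicity of the lattice. The only cosmetic difference is that you work with the images $e_i\Q^n$ and the induced direct sum decomposition of $\Q^n$, whereas the paper argues with the kernels of the idempotents via a dimension count.
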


\begin{proof}
    $K$ is a finite dimensional $\Q$-vector space since $K \subset \Mat(n,\Q)$, 
    so in particular $K$ is an Artinian ring. 
    Moreover, 
    $K$ is reduced by Lemma \ref{Lem:K is reduced}. 
    Therefore the ring $K$ is isomorphic to a finite product of fields 
    $K_1 \times \cdots \times K_r$, 
    where $K_i$ are fields. 
    We assume by contradiction that $r \ge2$. 
    Let $A_i \in \Mat(n,\Q)$ be the image of $1 \in K_i$ under the inclusion 
    $K_i \hookrightarrow K$ and $V_i \subsetneq \Q^n$ be the kernel of $A_i$. 
    For all $i \neq j$, 
    we have $A_i A_j=0$ by the definition of $A_i$. 
    In particular we obtain $\dim(\im A_i) \le \dim V_j$. 
    Thus we have the following inequality:  
    \[
    \dim V_i + \dim V_j = n - \dim(\im A_i) + \dim V_j \ge n. 
    \]
    Therefore we have $V_i \neq \{ 0 \}$ since $\dim V_j < n$. 

    The non-trivial subspace $V_i \subset \Q^n$ is 
    $K$-invariant since $K$ is commutative. 
    This contradicts the assumption 
    that the representation of $U$ on $\Q^n$ is irreducible. 
\end{proof}

\begin{lemma}\label{Lem:[K:Q]=n}
    $[K:\Q]=n$. 
\end{lemma}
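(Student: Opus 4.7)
The plan is to exploit the field structure of $K$ established in Lemma \ref{Lem:K is a field} together with the simple-type hypothesis on the lattice, which gives the irreducibility of $\Q^n$ as a $U$-representation. The inclusion $K \subset \Mat(n,\Q)$ makes $\Q^n$ a $K$-vector space. Since $K=\Q[U]$, a $\Q$-subspace of $\Q^n$ is stable under $K$ if and only if it is stable under $U$; hence the $K$-submodules of $\Q^n$ are exactly the $U$-invariant $\Q$-subspaces.

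By the discussion preceding Lemma \ref{Lem:K is reduced}, the simple-type hypothesis on the lattice $\Gamma \ltimes_{\phi_C} (P^{-1}\Z^n)$ is equivalent to $\Q^n$ being an irreducible $U$-representation, so $\Q^n$ is a simple $K$-module. Because $K$ is a field, any simple $K$-module is isomorphic to $K$ itself and therefore has $K$-dimension $1$. Thus $\dim_K \Q^n = 1$, and the dimension formula
\[
n = \dim_\Q \Q^n = [K:\Q]\cdot \dim_K \Q^n = [K:\Q]
\]
gives the claim.

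As a sanity check on the upper bound, one can also read $[K:\Q]\le n$ off the diagram in the proof of Lemma \ref{Lem:K is reduced}: the $\R$-algebra map $\rho\colon K\otimes_\Q \R \hookrightarrow \R^s\oplus\C^t$ is injective and the target has $\R$-dimension $s+2t=n$, so $[K:\Q]=\dim_\R(K\otimes_\Q \R)\le n$; the simplicity argument above then upgrades this to an equality.

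No serious obstacle is expected here, since the two preceding lemmas have already done the essential work; the only point requiring any care is the (routine) identification of $U$-invariant subspaces with $K$-submodules, which follows at once from $K=\Q[U]$ and the commutativity of $K$.
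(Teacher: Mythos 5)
Your proof is correct and follows essentially the same route as the paper: view $\Q^n$ as a $K$-vector space via $K\subset\Mat(n,\Q)$, use the simple-type hypothesis (irreducibility of the $U$-representation) to conclude $\dim_K\Q^n=1$, and then read off $[K:\Q]=n$. The extra remarks (identifying $U$-invariant subspaces with $K$-submodules, and the upper bound via $\rho$) just make explicit what the paper leaves implicit.
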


\begin{proof}
    We consider $\Q^n$ as a $K$-vector space via 
    $K \subset \Mat(n,\Q)$. 
    The irreducibility of the representation of $U$ on $\Q^n$ implies that 
    $\dim_{K}\Q^n=1$. 
    Thus we have $\dim_{\Q} K =n$. 
\end{proof}

Before the proof of Theorem 
\ref{Thm:OT-like Lie group with simple lattice is constructed by OT}, 
we prove the following basic algebraic lemma. 

\begin{lemma}\label{Lem:isomorphism of F-algebra}
    Let $F$ be a field and 
    $F_1,\ldots,F_n$ be finite extension fields of $F$. 
    Let $\varphi : F_1 \times \cdots \times F_n \to 
    F_1 \times \cdots \times F_n$
    be an automorphism of $F$-algebras. 
    Then, 
    there exists a permutation $\sigma$ of $\{ 1,2,\ldots,n\}$ 
    and isomorphisms of $F$-algebras 
    $\tau_i : F_{\sigma(i)} \to F_i$ $(i=1,2,\ldots,n)$ such that 
    \[
    \varphi(x_1,\ldots,x_n) = \left( 
    \tau_1(x_{\sigma(1)}), \ldots ,\tau_n(x_{\sigma(n)})
    \right). 
    \]
\end{lemma}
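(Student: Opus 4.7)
The plan is to analyze the primitive idempotents of the $F$-algebra $R \coloneqq F_1 \times \cdots \times F_n$. Write $e_i \coloneqq (0,\ldots,0,1,0,\ldots,0) \in R$ with the $1$ in the $i$-th slot. First I would verify that $\{e_1,\ldots,e_n\}$ is exactly the set of primitive (nonzero, indecomposable) idempotents of $R$: any idempotent of $R$ has each component an idempotent of some $F_j$, and since $F_j$ is a field its only idempotents are $0$ and $1$, so every idempotent of $R$ is a sum of a subset of $\{e_1,\ldots,e_n\}$, and the indecomposable such elements are precisely the $e_i$ themselves.

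Next, since $\varphi$ is an $F$-algebra automorphism, it sends idempotents to idempotents and preserves indecomposability, so it permutes $\{e_1,\ldots,e_n\}$. Hence there is a permutation $\pi$ of $\{1,\ldots,n\}$ with $\varphi(e_i) = e_{\pi(i)}$ for each $i$. Because $\varphi$ is multiplicative, it restricts to an $F$-algebra isomorphism of principal ideals $e_i R \xrightarrow{\sim} e_{\pi(i)} R$ (using $\varphi(e_i R)=\varphi(e_i)\varphi(R)=e_{\pi(i)}R$), which under the canonical identifications $e_i R \cong F_i$ yields $F$-algebra isomorphisms $\rho_i \colon F_i \to F_{\pi(i)}$.

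Finally, writing any element as $x = \sum_i x_i e_i$ and using $F$-linearity and multiplicativity of $\varphi$, I compute
\[
\varphi(x) \;=\; \sum_i \rho_i(x_i)\,e_{\pi(i)} \;=\; \sum_j \rho_{\pi^{-1}(j)}\bigl(x_{\pi^{-1}(j)}\bigr)\,e_j.
\]
Setting $\sigma \coloneqq \pi^{-1}$ and $\tau_j \coloneqq \rho_{\sigma(j)} \colon F_{\sigma(j)} \to F_{\pi(\sigma(j))} = F_j$ then gives the desired formula.

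I do not expect a substantive obstacle: the crux is the intrinsic characterization of the $e_i$ as the primitive idempotents, after which everything is formal. The only mild care needed is noting that the $F_i$ need not be mutually isomorphic, so $\sigma$ must match isomorphic factors; but this is automatic because $\rho_i$ is itself the witness of the isomorphism $F_i \cong F_{\pi(i)}$ produced by $\varphi$.
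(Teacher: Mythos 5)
Your proof is correct, but it runs along a different line from the paper's. The paper argues via the projections: for each $i$ it considers $\varphi_i = \pr_i \circ \varphi$, observes that $\ker\varphi_i$ is a prime ideal of $F_1 \times \cdots \times F_n$ and hence must be $F_1 \times \cdots \times \{0\} \times \cdots \times F_n$ with the $\{0\}$ in some slot $\sigma(i)$, so that $\varphi_i = \tau_i \circ \pr_{\sigma(i)}$ for an injective $F$-algebra map $\tau_i$, and then uses bijectivity of $\varphi$ to conclude that $\sigma$ is a permutation and each $\tau_i$ an isomorphism. You instead work with the primitive idempotents $e_1,\ldots,e_n$: since each $F_j$ has only the trivial idempotents, these are exactly the primitive idempotents of the product, an automorphism permutes them, and restricting $\varphi$ to the ideals $e_iR$ produces the component isomorphisms directly. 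The two arguments are essentially dual (maximal ideals versus primitive idempotents); yours has the mild advantage of not needing to classify prime ideals of a product and of packaging the permutation and the isomorphisms in one step, while the paper's gets the permutation and the maps $\tau_i$ simultaneously from the factorization of $\varphi_i$ and only invokes bijectivity at the end. Neither proof actually uses that the $F_i$ are finite extensions of $F$; both work for arbitrary fields. Your bookkeeping with $\sigma = \pi^{-1}$ and $\tau_j = \rho_{\sigma(j)}$ is also consistent with the statement as formulated.
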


\begin{proof}
    Let $\varphi_i$ denotes the composition 
    \[
    F_1 \times \cdots \times F_n \xrightarrow{\varphi} 
    F_1 \times \cdots \times F_n \xrightarrow{\pr_i} F_i. 
    \]
    Then, 
    the kernel of $\varphi_i$ is a prime ideal of $F_1 \times \cdots \times F_n$, 
    so there is some $\sigma(i) \in \{1,2,\ldots,n\}$ such that
    \[
    \ker \varphi_i = F_1 \times \cdots \times \{ 0 \} \times \cdots F_n, 
    \]
    where $\{ 0 \}$ is at the $\sigma(i)$-th component. 
    In other words, 
    we can write $\varphi_i = \tau_i \circ \pr_{\sigma(i)}$ for 
    some injective homomorphism of $F$-algebras $\tau_i : F_{\sigma(i)} \to F_i$. 
    Since $\varphi$ is an automorphism, 
    it follows that $\sigma$ is bijective and $\tau_i$ are isomorphisms of 
    $F$-algebras. 
\end{proof}

\begin{proof}[Proof of Theorem 
\ref{Thm:OT-like Lie group with simple lattice is constructed by OT}]
    By Lemma \ref{Lem:[K:Q]=n}, 
    it follows that the inclusion 
    $\rho\, \colon K \otimes_{\Q} \R \hookrightarrow \R^s \times \C^t$ 
    is an isomorphism of $\R$-algebras. 
    We suppose that $K$ admits $s'$ real embeddings 
    $\sigma_1, \ldots ,\sigma_{s'} : K \hookrightarrow \R$, 
    and $2t'$ complex embeddings 
    $\sigma_{s'+1}, \ldots ,\sigma_{s'+2t'} : K \hookrightarrow \C$ 
    such that $\sigma_{s'+i} = \conj{\sigma_{s'+t'+i}}$ 
    for all $1 \le i \le t'$. 
    By the Minkowski theory \cite{Neu99}, 
    the map
    \[
    \begin{array}{c}
    \sigma\, \colon K \otimes_{\Q} \R \to \R^{s'} \times \C^{t'} \\
    a \otimes 1 \mapsto (\sigma_1(a), \ldots,\sigma_{s'+t'}(a))
    \end{array}
    \]
    is an isomorphism of $\R$-algebras. 
    Thus we obtain the isomorphism of $\R$-algebras 
    $\R^{s} \times \C^{t} \stackrel{\sim}{\to}  \R^{s'} \times \C^{t'}$ 
    through the maps $\rho$ and $\sigma$. 
    By counting the number of homomorphism of $\R$-algebras 
    from $\R^{s} \times \C^{t}$ and $\R^{s'} \times \C^{t'}$ to $\R$, 
    we have $s=s'$. 
    Since $s+2t=s'+2t'$, 
    we also have $t=t'$. 
    By applying Lemma \ref{Lem:isomorphism of F-algebra} 
    to the automorphism 
    $\R^{s} \times \C^{t} \stackrel{\sim}{\to}  \R^{s'} \times \C^{t'}$, 
    after relabeling the indices, 
    we may assume that 
    $\sigma_i(A) = \mu^i_{A}$ and $\sigma_{s+j}(A) = \lambda^j_{A}$ 
    for all $1 \le i \le s, 1 \le j \le t$ and 
    $A \in K \otimes_{\Q} \R = \R[U]$. 

    Since the characteristic polynomials of 
    $A, A^{-1} \in U \subset \GL(n,\Z)$ are 
    monic and has integer coefficients, 
    it follows that $U \subset \unit{K}$. 
    Moreover, 
    we can see that $U$ is an admissible subgroup of $\unitp{K}$ 
    since $\sigma_i(A)=\mu^i_A>0$ and  
    $\left\{ (\log\mu^1_A, \ldots ,\log\mu^s_A) 
    \in \R^s \, \mid \, A \in U \right\}$ 
    forms a lattice in $\R^s$. 

    Now we shall recall the construction of the Lie group 
    associated to $X(K,U)$. 
    The Lie group structure 
    associated to $X(K,U)$ is determined by the map 
    $\phi' \, \colon \R^s \to \GL(\R^s \oplus \C^t)$ which makes 
    the following diagram commutative: 
        \[
            \begin{tikzcd}[row sep=large, column sep=large]
                U \arrow[r, hook, "pr_{\R^s} \circ \log"] \arrow[rd, "R"'] 
                & \R^s \arrow[d, "\phi'"] \\
                & \GL(\R^s \oplus \C^t), 
            \end{tikzcd}
        \]
    where $R$ is rotation as in Lemma \ref{Lem:definition of phi}. 
    But the following diagram is commutative 
    by the definition of $U$ and $\rho$: 
        \[
            \begin{tikzcd}[row sep=large, column sep=large]
                U \arrow[r, hook, "pr_{\R^s} \circ \log"] 
                \arrow[rd,"\rho|_U"'] 
                & \R^s \arrow[d, "\phi_C"] \\
                & \GL(\R^s \oplus \C^t). 
            \end{tikzcd}
        \]
    Therefore, 
    we can take $\phi'$ as $\phi_C$. 
    Hence $\R^s \ltimes_{\phi_C} (\R^s \oplus \C^t)$ is 
    a Lie group associated to $X(K,U)$. 
\end{proof} 

Fortunately, 
it turns out that the assumption of lattice simplicity 
is not needed for LCK OT-like Lie groups. 

\begin{proposition}\label{Prop:LCK OT-like is simple}
    Any lattice of LCK OT-like Lie group is of simple type. 
\end{proposition}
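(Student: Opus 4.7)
The plan is to reduce the simplicity of any lattice $\Gamma$ in the LCK OT-like group $G_C = \R^s \ltimes_{\phi_C}(\R^s \oplus \C^t)$ to an irreducibility statement for a representation of a free abelian group, and then extract a contradiction from the LCK condition $\Re c_{ij} = -\tfrac{1}{2t}$ via a determinant computation. First I would invoke Lemma \ref{Lem:nilradical of OT}, which identifies the nilradical of $G_C$ with $\R^s \oplus \C^t$, so that the proof of Proposition \ref{Prop:meta-abelian with a lattice admits splitting a lattice} applies: $\Gamma_0 \coloneqq \Gamma \cap (\R^s \oplus \C^t)$ is a lattice in $\R^s \oplus \C^t$, and the projection $\Gamma_1 \coloneqq \pi(\Gamma)$ is a lattice in $\R^s$. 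A direct conjugation computation shows that each $(x,y) \in \Gamma$ acts on $z \in \Gamma_0$ by $\phi_C(x)z$, so simplicity of $\Gamma$ is equivalent to irreducibility of the $\Gamma_1$-representation on $\Gamma_0 \otimes_\Z \Q$ induced by $\phi_C$.

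Next, suppose for contradiction that $V \subsetneq \Gamma_0 \otimes_\Z \Q$ is a proper nonzero $\phi_C(\Gamma_1)$-stable $\Q$-subspace; set $V_\R \coloneqq V \otimes_\Q \R$ and $V_\C \coloneqq V \otimes_\Q \C$. The complexification $(\R^s \oplus \C^t) \otimes_\R \C$ splits under $\phi_C(\R^s)$ into the real-weight part $E^\R$, with weights $e_j(x) = x_j$ for $1 \le j \le s$, and the complex-weight part $E^\C$, with weights $\sum_j c_{kj} x_j$ for $1 \le k \le t$ together with their conjugates. Now the LCK hypothesis pays off: every complex weight has absolute value $\exp(-\sigma(x)/(2t))$ where $\sigma(x) \coloneqq x_1 + \cdots + x_s$, whereas real weights have absolute values $e^{x_j}$. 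Because $\Gamma_1$ spans $\R^s$ as a real vector space, the linear forms $x_i - x_j$ ($i \ne j$) and $x_j + \sigma(x)/(2t)$ are all nonzero on some element of $\Gamma_1$, so the $\Gamma_1$-characters arising from real weights are mutually distinct, and distinct in absolute value from every $\Gamma_1$-character arising from a complex weight. This forces the decomposition
\[
V_\C = \Bigl(\bigoplus_{j=1}^{s} (V_\C \cap W_{e_j})\Bigr) \oplus (V_\C \cap E^\C),
\]
where $W_{e_j}$ is the one-dimensional $e_j$-weight line in $E^\R$.

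The crux is a determinant calculation. Since $\phi_C(\gamma)$ preserves the $\Z$-lattice $V \cap \Gamma_0$ in $V_\R$, we have $\bigl|\det\bigl(\phi_C(\gamma)|_{V_\R}\bigr)\bigr| = 1$ for every $\gamma \in \Gamma_1$. Setting $m_j \coloneqq \dim_\C(V_\C \cap W_{e_j}) \in \{0,1\}$ and $M \coloneqq \dim_\C(V_\C \cap E^\C)$, the decomposition above yields
\[
0 = \log \bigl|\det\bigl(\phi_C(x)|_{V_\R}\bigr)\bigr| = \sum_{j=1}^{s} m_j\, x_j - \frac{M}{2t}\,\sigma(x) \quad \text{for all } x \in \Gamma_1.
\]
Since $\Gamma_1$ spans $\R^s$, the linear form must vanish identically, forcing $m_j = M/(2t)$ for each $j$. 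With $m_j \in \{0,1\}$ the only possibilities are $(m_j, M) = (0, 0)$, giving $V = 0$, or $(m_j, M) = (1, 2t)$, giving $V = \Gamma_0 \otimes_\Z \Q$, both contradicting the assumption on $V$.

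The main subtlety I anticipate is that two distinct complex $\R^s$-weights $\psi_k, \psi_l$ may well restrict to the same character of $\Gamma_1$, namely when $\psi_k - \psi_l$ takes values in $2\pi i \Z$ on $\Gamma_1$, so the joint $\Gamma_1$-eigenspace decomposition of $\C^{s+2t}$ can be strictly coarser than the $\R^s$-weight decomposition. The essential point, which lies at the heart of the argument, is that such a coarsening never mixes real with complex weights: all complex weights share the single absolute value $\exp(-\sigma(x)/(2t))$ distinguished from every $e^{x_j}$ by the LCK condition, and that is precisely what is needed to collapse the determinant identity to the clean two-term form displayed above.
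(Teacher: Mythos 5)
Your proof is correct, but after the common setup (Mostow via Proposition \ref{Prop:meta-abelian with a lattice admits splitting a lattice}, the reduction of simplicity to irreducibility of the $\Gamma_1$-action through $\phi_C$ on $\Gamma_0\otimes_\Z\Q$, and the use of $\Re c_{ij}=-\tfrac{1}{2t}$ to equalize the absolute values of all complex eigenvalues) it diverges genuinely from the paper's argument. The paper does not analyze an arbitrary invariant subspace: it passes to the conjugated group $U\subset\SL(n,\Z)$ and uses the lattice property of $\{(\log\mu^1_A,\dots,\log\mu^s_A)\}$ to pick a single special element $A_0$ with $\mu^1_{A_0}>1$, $\mu^i_{A_0}<1$ for $i>1$ and $\prod_i\mu^i_{A_0}>1$; combined with $\det A_0=1$ and the equal moduli $\abs{\lambda^1_{A_0}}=\cdots=\abs{\lambda^t_{A_0}}$ this puts all eigenvalues but one strictly inside the unit circle, and an elementary lemma (Lemma \ref{Lem:|lambda|<1 then irreducible}, a Kronecker/Pisot-type argument on the constant term) shows the characteristic polynomial of $A_0$ is $\Q$-irreducible, which immediately forces irreducibility of the whole representation. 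You instead take an arbitrary proper nonzero rational invariant subspace $V$, separate the real weight lines from the complex block inside $V_\C$ using the fact that the real characters $e^{x_j}$ and the common modulus $e^{-\sigma(x)/(2t)}$ of the complex characters are distinct on the spanning set $\Gamma_1$, and then exploit that $\phi_C(\gamma)$ acts by automorphisms on the full-rank sublattice $V\cap\Gamma_0$, so $\log\abs{\det(\phi_C(x)|_{V_\R})}=\sum_j m_jx_j-\tfrac{M}{2t}\sigma(x)$ must vanish on $\Gamma_1$, forcing $m_j=M/(2t)\in\{0,1\}$ and hence $V=0$ or $V=\Gamma_0\otimes_\Z\Q$. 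Your route avoids both the choice of the special element $A_0$ and the integer-matrix irreducibility lemma, at the cost of the character-separation and determinant bookkeeping; the paper's route is shorter given its lemma and has the side benefit of producing an element of $U$ of full degree (condition (iii) of the lemma characterizing simple type), which is the arithmetic input reused in Theorem \ref{Thm:OT-like Lie group with simple lattice is constructed by OT}. The only small points you leave implicit -- that $V\cap\Gamma_0$ has full rank in $V_\R$, and that restrictions of the commuting diagonalizable operators $\phi_C(x)$ to invariant subspaces remain diagonalizable with eigenvalues among the original ones -- are standard and do not affect correctness.
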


Before the proof, 
we prove a purely algebraic lemma. 

\begin{lemma}\label{Lem:|lambda|<1 then irreducible}
    Let $A \in \SL(n,\Z)$ be a matrix and 
    $\lambda_1\, \ldots ,\lambda_n$ be their eigenvalues. 
    If their absolute values satisfies 
    \[
    \left\{
    \begin{aligned}
        \abs{\lambda_1} &> 1, \\
        \abs{\lambda_i} &< 1, \quad \text{for all $i>1$}, \\
    \end{aligned}
    \right. 
    \]
    then its characteristic polynomial $P_A(x) \in \Z[x]$ is 
    irreducible over $\Q$. 
\end{lemma}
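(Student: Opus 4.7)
The plan is to use Gauss's lemma together with a size estimate on the constant term of any putative proper factor. Since $A \in \SL(n,\Z)$, its characteristic polynomial $P_A(x)$ is monic with integer coefficients, and $\det A = 1$ forces $\prod_{i=1}^n \lambda_i = (-1)^n P_A(0) = \pm 1$; in particular, none of the $\lambda_i$ is zero. By Gauss's lemma, any factorization of $P_A$ over $\Q$ into monic polynomials of positive degree gives a factorization $P_A = f \cdot g$ with $f, g \in \Z[x]$ monic and $\deg f, \deg g \ge 1$.

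Next I would observe that exactly one of $f, g$ has $\lambda_1$ as a root — say $\lambda_1$ is a root of $f$. Then every root of $g$ lies in $\{\lambda_2, \ldots, \lambda_n\}$, so by hypothesis every root of $g$ has absolute value strictly less than $1$. Since $g$ is monic, its constant term equals $(-1)^{\deg g}$ times the product of its roots, and the absolute value of this product is strictly less than $1$. On the other hand, the constant term is a nonzero integer — nonzero because none of the eigenvalues vanishes, and an integer because $g \in \Z[x]$. These two facts are incompatible, so no such factorization exists and $P_A$ is irreducible over $\Q$.

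There is no real obstacle here; the only subtlety is remembering to invoke Gauss's lemma so that the estimate on the absolute value of the constant term applies to an integer rather than merely a rational number. This is where the hypothesis $A \in \SL(n,\Z)$ (as opposed to $A \in \GL(n,\Q)$) is essential: without integrality of $P_A$, the product of a proper subset of roots could perfectly well be a rational of absolute value less than $1$.
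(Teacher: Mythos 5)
Your proof is correct and follows essentially the same route as the paper: reduce to a factorization in $\Z[x]$ via Gauss's lemma, isolate the factor not vanishing at the simple root $\lambda_1$, and derive a contradiction from its constant term being a nonzero integer while the product of its roots has absolute value strictly less than $1$. The only cosmetic difference is that the paper phrases the contradiction as the constant term being forced to equal $\pm 1$, whereas you note directly that a nonzero integer cannot have absolute value below $1$; these are the same estimate.
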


\begin{proof}
    By Gauss's lemma, 
    it is enough to prove that $P_A(x)$ is irreducible over $\Z$. 
    Assume that there are polynomials 
    $P_1(x), P_2(x) \in \Z[x]$ such that $P_A(x)=P_1(x)P_2(x)$. 
    Since $\lambda_1$ is a single root of $P_A(x)$, 
    we may assume that $P_1(\lambda_1)=0$ and $P_2(\lambda_1) \neq 0$. 
    Then it can be seen that 
    the absolute values of all roots of $P_2(x)$ are less than $1$. 
    However, 
    the constant term of $P_2(x)$ is $\pm 1$. 
    This is only possible when $P_2(x)=\pm 1$, 
    which completes the proof. 
\end{proof}

\begin{proof}[Proof of Proposition \ref{Prop:LCK OT-like is simple}]
    By Proposition 
    \ref{Prop:meta-abelian with a lattice admits splitting a lattice}, 
    there exists a subgroup $U \subset \SL(n,\Z)$ of rank $s$ and 
    $P \in \GL(n,\R)$ such that 
    \[
    P^{-1}AP = \text{diag} (\mu^1_A, \ldots \mu^s_A,\lambda^1_A, \ldots ,\lambda^t_A)
    \]
    for all $A \in U$, 
    and 
    $\{ (\log\mu^1_A, \ldots ,\log\mu^s_A) \in \R^s \, \mid \, A \in U \}$ 
    forms a lattice in $\R^s$, 
    as in the proof of Theorem 
    \ref{Thm:OT-like Lie group with simple lattice is constructed by OT}. 
    From the relations between $\mu^i_A$ and $\lambda^i_A$ such that 
    \[
    \lambda^i_A = \exp \left(\sum_{j=1}^s c_{ij} \mu^j_A \right)
    \]
    and the condition $\Re c_{ij}=-1/2t$, 
    we have 
    \begin{equation}\label{Equ:absolute values of lambda are equal}
    \abs{\lambda^1_A}= \cdots = \abs{\lambda^t_A}. 
    \end{equation}
    As $\left\{ (\log\mu^1_A, \ldots ,\log\mu^s_A) 
    \in \R^s \, \mid \, A \in U \right\}$ 
    forms a lattice in $\R^s$, 
    we can take $A_0 \in U$ so that the following conditions: 
    \[
    \left\{
    \begin{aligned}
        \log{\mu^1_{A_0}} &> 0, \\
        \log{\mu^i_{A_0}} &< 0, \quad \text{for all $i>1$}, \\
        \, \sum_{i=1}^s \log{\mu^i_{A_0}} &>0 
    \end{aligned}
    \right. 
    \]
    holds. 
    Equivalently, 
    we have 
    \[
    \left\{
    \begin{aligned}
        \mu^1_{A_0} &> 1, \\
        \mu^i_{A_0} &< 1, \quad \text{for all $i>1$}, \\
        \, \prod_{i=1}^s \mu^i_{A_0} &>1. 
    \end{aligned}
    \right. 
    \]
    By combining this with $\mu^1_{A_0} \cdots 
    \mu^s_{A_0} \abs{\lambda^1_{A_0}}^2 \cdots \abs{\lambda^t_{A_0}}^2=1$ 
    and (\ref{Equ:absolute values of lambda are equal}), 
    we have $\abs{\mu^i_{A_0}}<1$ for all $1\le i \le s$. 
    From Lemma \ref{Lem:|lambda|<1 then irreducible}, 
    the characteristic polynomial of $A_0$ is irreducible over $\Q$. 
    The simplicity of the lattice is equivalent to 
    the irreducibility of the representation of $U$ on $\Q^n$. 
    If the representation of $U$ on $\Q^n$ is reducible, 
    the characteristic polynomial of $A_0$ is reducible over $\Q$. 
    This is not the case. 
\end{proof}

As a consequence, 
we have the following

\begin{corollary}\label{Cor:LCK OT-like with lattice is OT}
    Let $G$ be an LCK OT-like Lie group of type $(s,t)$. 
    If $G$ admits a lattice, 
    we have $t=1$. 
    Moreover, 
    $G$ is a Lie group associated to some OT manifold $X(K,U)$ 
    of type $(s,1)$ and of simple type. 
\end{corollary}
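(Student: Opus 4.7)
The proof will follow by combining three previously established results in a straightforward chain, so this corollary should really be read as a wrap-up rather than a new argument. The plan is: use Proposition \ref{Prop:LCK OT-like is simple} to upgrade the given (arbitrary) lattice to a lattice of simple type, then feed this into Theorem \ref{Thm:OT-like Lie group with simple lattice is constructed by OT} to obtain a number-theoretic presentation of $G$ as the Lie group associated to some OT manifold $X(K,U)$ of simple type, and finally extract $t=1$ from Theorem \ref{Thm:type of OT with LCK is (s,1)} by descending the left-invariant LCK structure.

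In more detail, let $G$ be LCK OT-like of type $(s,t)$ and suppose $\Gamma \subset G$ is a lattice. First I would apply Proposition \ref{Prop:LCK OT-like is simple}, which says that \emph{every} lattice in an LCK OT-like Lie group is automatically of simple type; in particular $\Gamma$ is of simple type. This is the key input that removes the extra simplicity hypothesis present in Theorem \ref{Thm:OT-like Lie group with simple lattice is constructed by OT}. Having a lattice of simple type, I then apply that theorem directly: it yields a number field $K$ and an admissible subgroup $U \subset \unitp{K}$ such that $G$ is the Lie group associated to the OT manifold $X(K,U)$, and moreover $X(K,U)$ is of simple type. At this stage the type of $X(K,U)$ matches the type $(s,t)$ of $G$, because the OT-like type is built into the semidirect product decomposition $\R^s \ltimes (\R^s \oplus \C^t)$.

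It remains to show $t=1$. Because $G$ is LCK OT-like, by Proposition \ref{Prop:LCK metric on LCK OT-like algebras} there is a left-invariant LCK metric on $G$; this metric descends to any compact quotient $\Gamma \backslash G$, and in particular to the OT manifold $X(K,U) \simeq \Gamma \backslash G$. Thus $X(K,U)$ admits an LCK metric, and Theorem \ref{Thm:type of OT with LCK is (s,1)} forces $t=1$. No step here is an obstacle in itself; the only subtlety worth double-checking is that the diffeomorphism $X(K,U) \simeq \Gamma \backslash G$ produced by Theorem \ref{Thm:OT-like Lie group with simple lattice is constructed by OT} is compatible with the complex structure, so that ``admits an LCK metric'' really transfers between the two sides. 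This compatibility, however, is essentially built into the construction of the Lie-theoretic model of OT manifolds recalled in Section \ref{section:Oeljeklaus-Toma manifolds}, so there is no serious difficulty. The corollary then follows.
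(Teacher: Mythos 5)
Your chain is exactly the one the paper intends: Proposition \ref{Prop:LCK OT-like is simple} makes every lattice simple, Theorem \ref{Thm:OT-like Lie group with simple lattice is constructed by OT} then realizes $G$ as associated to an OT manifold $X(K,U)$ of simple type, and $t=1$ follows from Theorem \ref{Thm:type of OT with LCK is (s,1)} once one knows $X(K,U)$ carries an LCK metric. The only cosmetic difference is how that last input is obtained --- you descend the standard left-invariant metric of Proposition \ref{Prop:LCK metric on LCK OT-like algebras}, while one could equally read off the arithmetic criterion $\abs{\sigma_{s+1}(u)}=\cdots=\abs{\sigma_{s+t}(u)}$ (Proposition \ref{Prop:LCK then absolute values of sigma unit is equal}) from the equalities $\abs{\lambda^1_A}=\cdots=\abs{\lambda^t_A}$ already established in the proof of Proposition \ref{Prop:LCK OT-like is simple}; both are immediate, so your proposal matches the paper's argument.
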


\begin{remark}
    The assumption of lattice simplicity is necessary. 
    For example, 
    let $X(K_i,U_i)$ $(i=1,2)$ are OT manifolds of type $(s_i,t_i)$. 
    Then, 
    the direct product of the associated Lie groups 
    $(\R^{s_1} \ltimes (\R^{s_1} \oplus \C^{t_1})) \times 
    (\R^{s_2} \ltimes (\R^{s_2} \oplus \C^{t_2})) = 
    (\R^{s_1} \oplus \R^{s_2}) \ltimes ((\R^{s_1} \oplus \R^{s_2}) \oplus 
    (\C^{t_1} \oplus \C^{t_2}))$ is an OT-like Lie group admitting a lattice. 
    When trying to prove Theorem 
    \ref{Thm:OT-like Lie group with simple lattice is constructed by OT} 
    about this group, 
    we find that Lemma \ref{Lem:K is a field} does not hold. 
    Even if we consider an OT-like Lie group for which $K$ can be shown to be a field, 
    we still cannot prove Lemma \ref{Lem:[K:Q]=n} without lattice simplicity. 
    This may suggest that a broader class of manifolds than OT manifolds 
    should be considered. 
\end{remark}

\begin{remark}
    According to Proposition \ref{Prop:LCK OT-like is simple},  
    OT manifolds of type $(s,1)$ are of simple type. 
    As a similar result, 
    it is proved in \cite{ADOS24} that 
    OT manifolds admitting a pluriclosed metric are of simple type.  
\end{remark}

\section{LCK structures of meta-abelian Lie algebras 
$\R^m \ltimes \R^n$}\label{section:meta-abelian}

In this section, 
we study LCK structures of meta-abelian 
Lie algebras $\fg=\R^m \ltimes_{d\phi} \R^n$. 
We always assume that $m,n \ge 1$, 
$\fg$ is unimodular and the Lee form does not vanish, 
since solvmanifolds admitting a K\"{a}hler structure are 
well-studied (see for example \cite{Has06}). 
To avoid confusion, 
we write $\R^m,\R^n$ as $\fm, \fn$, respectively.  
We fix an LCK structure $(\fg,J,\met{\cdot}{\cdot})$ whose 
Lee form is $\theta$. 

This section is divided into two subsections. 
In the first subsection, 
we consider the trivial case, 
in which $(\fg,J,\met{\cdot}{\cdot})$ is found to be Vaisman. 
The main results in this section are contained in the second subsection, 
in which we consider the non-Vaisman case. 

\subsection{Vaisman case} 

Firstly, 
we define abelian complex structures. 
We write $\fd$ for an arbitrary Lie algebra 
since we write $\fg$ for a fixed Lie algebra $\R^m \ltimes_{d\phi}\R^n$. 

\begin{definition}
    A complex structure $J$ on a Lie algebra $\fd$ is called 
    \emph{abelian} if the following relation 
    \[
    [Jx,Jy]=[x,y]
    \]
    holds for all $x,y \in \fd$. 
\end{definition}

Unimodular LCK Lie algebras with abelian complex structures are 
classified in \cite{AO15}. 

\begin{proposition}[\cite{AO15}, Theorem 5.1]
    Let $(\fd,J,\met{\cdot}{\cdot})$ be a 
    unimodular LCK Lie algebra. 
    If $J$ is abelian, 
    $(\fd,J,\met{\cdot}{\cdot})$ is isomorphic to 
    $(\R \times \mathfrak{h}_{2d+1},J_0,\met{\cdot}{\cdot}_0)$, 
    where $2d+2=\dim \fd$, 
    up to scalar multiple. 
    In particular, 
    $(\fd,J,\met{\cdot}{\cdot})$ is Vaisman. 
\end{proposition}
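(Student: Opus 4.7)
The plan is to exploit the abelian condition on $J$ to rigidify the Lie bracket, and then use the LCK structure together with unimodularity to force the model. I would work throughout in the Chevalley--Eilenberg complex $(\bigwedge^\bullet \fd^*, d)$, where the LCK identity reads $d\omega = \theta \w \omega$ with $\theta$ the Lee form and $\omega(\cdot,\cdot) = \met{J\cdot}{\cdot}$.

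First I would extract the basic abelian-$J$ identities. From $[Jx, Jy] = [x, y]$, the substitution $x \mapsto Jx$ gives $[Jx, y] = -[x, Jy]$, so in particular the Nijenhuis tensor vanishes automatically. Equivalently, $\fd^{1,0} \subset \fd \otimes \C$ is an abelian complex subalgebra. These symmetries sharply constrain how brackets appear inside the LCK identity: each term $\omega([U,V],W) = \met{J[U,V]}{W}$ can be rewritten so that "$[Jx, y]$" and "$[Jx, Jy]$" collapse into a single kind of term, halving the independent data in $d\omega$.

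Next I would introduce the Lee vector $A$, defined by $\theta(X) = \met{A}{X}$, and set $B = -JA$. The goal of this step is to show that both $A$ and $B$ are central in $\fd$. I would evaluate the LCK identity on the triples $(X, Y, A)$, $(X, Y, B)$, and $(X, JX, A)$, where $X, Y$ range over the orthogonal complement of $\langle A, B\rangle$. Using the abelian-$J$ relations to eliminate mixed brackets, the resulting system reorganizes into identities on the symmetric and antisymmetric parts of $\ad_A$ and $\ad_B$. Combining these with unimodularity ($\tr \ad_X = 0$ for every $X$) is expected to force $\ad_A = \ad_B = 0$. This is the technical heart of the argument.

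Once $A$ and $B$ are central, I would rescale $\theta$ so that $|A| = 1$ and decompose $\fd = \langle A, B\rangle \oplus \fv$ orthogonally and $J$-invariantly. Because $\theta|_\fv = 0$, the LCK equation restricted to $\fv$ reduces to $d\omega|_\fv = 0$, and a short computation with $d$ on $\fv^*$ shows that the bracket on $\fv$ takes values in $\R B$ with $[x, y] = c\,\omega(x, y)\, B$ for some nonzero constant $c$ (which can be absorbed into another rescaling). That is precisely the Heisenberg bracket, so $\fv \oplus \R B \simeq \mathfrak{h}_{2d+1}$ with its standard complex and inner-product structure, and hence $\fd \simeq \R A \times \mathfrak{h}_{2d+1}$. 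Centrality of $A$ makes $\ad_A = 0$ trivially skew-symmetric, so the LCK structure is Vaisman. The main obstacle is the second step: upgrading skew-symmetry of $\ad_A$ (which is only Vaismaness) to genuine centrality of $A$ and $B$. Producing that upgrade requires feeding the abelian-$J$ relations and unimodularity into the LCK identity simultaneously, and it is here that all three hypotheses of the proposition are really used.
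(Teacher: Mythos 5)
The first thing to note is that the paper itself contains no proof of this proposition: it is imported wholesale from \cite{AO15} (Theorem 5.1), so the only benchmark is the known argument of Andrada--Origlia. Your outline does follow the same broad route as that argument: introduce the Lee vector $A$ and $B=-JA$, show they are central, then exhibit the orthogonal complement together with $\R B$ as a Heisenberg algebra. Your preliminary identities are also correct ($[Jx,y]=-[x,Jy]$ follows from abelianness, and $N_J$ vanishes automatically).

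The difficulty is that the decisive step is announced rather than proved. You write that evaluating $d\omega=\theta\w\omega$ on the triples $(X,Y,A)$, $(X,Y,B)$, $(X,JX,A)$ and combining with $\tr\ad_X=0$ ``is expected to force'' $\ad_A=\ad_B=0$, and you yourself call this the technical heart; but no computation is supplied, and this is exactly where the content of the theorem sits. Skew-symmetry of $\ad_A$ is already the Vaisman condition, i.e.\ the conclusion, and centrality is stronger still, so none of it can be assumed; and unimodularity has to enter through an actual trace/positivity mechanism (in \cite{AO15} this is built from intermediate facts such as the self-adjointness of $\ad_A$ --- the statement quoted later in the paper as Lemma 3.4 of \cite{AO15} --- together with estimates coming from the Lee form and the metabelian structure forced by an abelian $J$). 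Your proposal gives no indication of how the listed evaluations interact with the trace condition to produce $\ad_A=\ad_B=0$. A second, smaller gap: even granting centrality of $A$ and $B$, your ``short computation'' on $\fv$ only determines part of the bracket. From $d\omega(x,y,A)=\theta\w\omega(x,y,A)$ the component of $[x,y]$ along $B$ is indeed forced to be proportional to $\omega(x,y)$, and the $A$-component vanishes because $\theta$ is closed; but the identity $d\omega(x,y,z)=0$ for $x,y,z\in\fv$ is only a cyclic relation and does not by itself kill the component of $[x,y]$ lying in $\fv$, which requires a further argument using the abelian-$J$ relations. As it stands, the proposal is a plausible outline of the known proof, with its two load-bearing steps left unproved.
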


Now we consider the meta-abelian Lie algebra 
$(\fg,J,\met{\cdot}{\cdot})$. 

\begin{lemma}\label{Lem:there on n is not zero then J is abelian}
    If $\theta|_{\fn} \neq 0$, 
    $J$ is abelian. 
\end{lemma}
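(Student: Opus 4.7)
The plan is to exploit the fact that $\fn$ is an abelian ideal in order to convert the LCK condition $d\omega = \theta \wedge \omega$ into strong algebraic constraints on $\omega|_\fn$, and then combine these with the integrability of $J$ to deduce the abelian property. First, since every bracket among elements of $\fn$ vanishes, the Chevalley–Eilenberg formula gives $d\omega|_{\Lambda^3 \fn} = 0$. Plugging this into the LCK identity produces the cyclic relation
\[
\theta(X)\omega(Y,Z) + \theta(Y)\omega(Z,X) + \theta(Z)\omega(X,Y) = 0,\quad \forall\, X,Y,Z\in\fn.
\]
Using the hypothesis $\theta|_\fn\neq 0$, I pick $e\in\fn$ with $\theta(e)=1$; taking $X=e$ and $Y,Z\in\fn_0:=\ker(\theta|_\fn)$ forces $\omega(Y,Z)=0$. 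Hence $\omega|_{\fn_0\times\fn_0}=0$, or equivalently $\omega|_\fn=\theta|_\fn\wedge\alpha$ for the 1-form $\alpha:=\omega(e,\cdot)|_\fn$. Translated through the Hermitian structure, this reads $J\fn_0\subset\fn_0^\perp$, a substantial constraint since $\dim\fn_0=n-1$ while $\dim\fn_0^\perp=m+1$.

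Next, I want to sharpen this orthogonality using integrability $J^2=-\id$ and further instances of the LCK identity. Specifically, I would test $d\omega=\theta\wedge\omega$ on mixed triples: $(X,Y,Z)$ with $X\in\fm$, $Y,Z\in\fn_0$, and with $X,Y\in\fm$, $Z\in\fn$. Since $\omega(Y,Z)=0$ whenever $Y,Z\in\fn_0$, these identities collapse into symmetry/skew-symmetry statements for $d\phi(X)$ with respect to the rank-2 form $\theta|_\fn\wedge\alpha$. Combined with $J\fn_0\subset\fn_0^\perp$ and $J^2=-\id$, these should pin down $J$ so that its $\fm$-component on $\fn_0$ lands essentially in a single one-dimensional subspace of $\fm$, and the remainder of $\fn_0$ is $J$-stable modulo something controlled by $e$.

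Finally, I reformulate the conclusion through integrability: since $N_J=0$, the condition $[Jx,Jy]=[x,y]$ is equivalent to $[Jx,y]+[x,Jy]=0$ for all $x,y\in\fg$. Because $[\fg,\fg]\subset\fn$, this splits into three cases according to whether $x,y$ lie in $\fm$ or $\fn$, and the bracket formula $[(X_1,Y_1),(X_2,Y_2)]=(0,d\phi(X_1)Y_2-d\phi(X_2)Y_1)$ reduces each to an identity among the projections $P_\fm\circ J$ and $P_\fn\circ J$ — precisely the data controlled by the previous step. The hard part will be making this bridge cleanly: the information extracted from Step 1 is a constraint purely about $\omega|_\fn$, whereas abelian-ness is a statement about brackets throughout $\fg$. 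The bridging identity comes from the LCK relation on triples mixing $\fm$ and $\fn_0$, and the delicate point is weaving these together with integrability so that the abelian identity falls out without an unwieldy case analysis.
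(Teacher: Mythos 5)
Your Step~1 is correct, and it already contains the paper's key geometric input: since $\fn$ is abelian and $\theta(e)\neq 0$ for some $e\in\fn$, the LCK identity on triples in $\fn$ forces $\omega$ to vanish on $\fn_0\times\fn_0$, where $\fn_0=\ker(\theta|_{\fn})$. In particular, because $\theta$ is closed we have $[\fg,\fg]\subset\fn_0$ (note $[\fg,\fg]\subset\fn$), so $\omega$ vanishes on $[\fg,\fg]\times[\fg,\fg]$, i.e.\ $[\fg,\fg]\perp J[\fg,\fg]$ and hence $[\fg,\fg]\cap J[\fg,\fg]=\{0\}$. The genuine gap is everything after that: Steps~2 and~3 are a plan rather than an argument (``should pin down'', ``the hard part will be''), the intermediate claims about $d\phi$ and about the $\fm$-component of $J$ on $\fn_0$ are neither proved nor evidently true, and no derivation of $[Ju,Jv]=[u,v]$ is actually carried out. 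So as written the proposal does not prove the lemma.

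The missing idea is a one-line consequence of integrability that closes the argument with no further use of the LCK equation and no case analysis. For all $u,v\in\fg$, the vanishing of the Nijenhuis tensor gives $[u,v]-[Ju,Jv]=-J\bigl([u,Jv]+[Ju,v]\bigr)$. The left-hand side lies in $[\fg,\fg]$, while the right-hand side lies in $J[\fg,\fg]$; by the orthogonality you established these subspaces intersect only in $0$, hence $[u,v]=[Ju,Jv]$ for all $u,v$, i.e.\ $J$ is abelian. This is exactly how the paper finishes. Your proposed bridge via mixed triples $(\fm,\fn_0)$ and a split verification of $[Jx,y]+[x,Jy]=0$ is unnecessary and, in the form sketched, would require information about $J$ relative to the splitting $\fm\ltimes\fn$ that you have not obtained.
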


\begin{proof}
    We can take $x \in \fn$ so that $\theta(x) \neq 0$. 
    For all $y,z \in [\fg,\fg] \subset \fn$, 
    we have 
    \[
    \begin{aligned}
        d\omega(x,y,z) &= 0, \\
        \theta \w \omega (x,y,z) &= \theta(x)\omega(y,z). 
    \end{aligned}
    \]
    So we obtain $\met{y}{Jz}=0$, 
    which implies that $[\fg,\fg] \perp J[\fg,\fg]$. 

    By the integrability of $J$, 
    for all $u,v \in \fg$, 
    we have 
    \[
    \begin{aligned}
    [u,v]-[Ju,Jv] &= N_J(u,v) - J([u,Jv]+[Ju,v]) \\
    &= -J([u,Jv]+[Ju,v]). 
    \end{aligned}
    \]
    Therefore we obtain $[u,v]-[Ju,Jv] \in 
    [\fg,\fg] \cap J[\fg,\fg] = \{ 0 \}$, 
    which completes the proof. 
\end{proof}

\begin{corollary}\label{Cor:classification when theta neq 0}
    Let $(\fg=\R^m \ltimes_{d\phi} \R^n, J, \met{\cdot}{\cdot})$ 
    be a unimodular LCK Lie algebra with the Lee form $\theta$. 
    If $\theta|_{\fn} \neq 0$, 
    $(\fg, J, \met{\cdot}{\cdot})$ is isomorphic to 
    $(\R \times \mathfrak{h}_{2d+1},J_0,\met{\cdot}{\cdot}_0)$, 
    where $2d+2=m+n$, 
    up to scalar multiple. 
\end{corollary}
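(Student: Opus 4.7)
The plan is to derive the corollary as an immediate consequence of Lemma \ref{Lem:there on n is not zero then J is abelian} together with the classification result from \cite{AO15} cited just before the corollary. The two ingredients fit together directly: the lemma handles the Lie-theoretic input (forcing $J$ to be abelian), while the AO15 proposition handles the classification up to isomorphism. No further structure of the semi-direct product $\R^m\ltimes_{d\phi}\R^n$ is needed once abelianness of $J$ is in hand.

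More precisely, the first step is to apply Lemma \ref{Lem:there on n is not zero then J is abelian} to our LCK Lie algebra $(\fg,J,\met{\cdot}{\cdot})$. The hypothesis $\theta|_{\fn}\neq 0$ is exactly what the lemma requires, so we conclude that $J$ is an abelian complex structure on $\fg$. The second step is to feed the data $(\fg,J,\met{\cdot}{\cdot})$ into the AO15 proposition. By the standing assumptions of the section, $\fg$ is unimodular, and by construction it admits an LCK metric whose complex structure has just been shown to be abelian. The AO15 proposition therefore asserts, up to scalar multiple, an isomorphism of LCK Lie algebras
\[
(\fg,J,\met{\cdot}{\cdot})\;\simeq\;(\R\times\fh_{2d+1},J_0,\met{\cdot}{\cdot}_0),
\]
where $2d+2=\dim_{\R}\fg$. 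Since $\dim_{\R}\fg=m+n$, this gives the value of $d$ claimed in the statement.

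There is essentially no obstacle left at this stage: the genuine work is already packaged inside Lemma \ref{Lem:there on n is not zero then J is abelian} (which uses $d\omega=\theta\w\omega$ together with the integrability of $J$ to force $[\fg,\fg]\perp J[\fg,\fg]$ and hence $[u,v]=[Ju,Jv]$) and inside the AO15 classification. Accordingly, the proposed proof is just the two-line chaining of these two inputs, with the only bookkeeping being the identification $2d+2=m+n$ and the reminder that the isomorphism is asserted up to a positive rescaling of the metric, consistent with the convention on uniqueness of LCK metrics recorded at the end of Section \ref{section:Preliminaries}.
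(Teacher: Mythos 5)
Your proposal is correct and matches the paper's intent exactly: the corollary is stated without separate proof precisely because it is the immediate chaining of Lemma \ref{Lem:there on n is not zero then J is abelian} (giving abelianness of $J$ from $\theta|_{\fn}\neq 0$) with the cited classification of unimodular LCK Lie algebras with abelian complex structure from \cite{AO15}. Your bookkeeping of $2d+2=m+n$ and the scalar-multiple convention is also consistent with the paper.
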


\begin{remark}
    If we assume that $(\fg, J, \met{\cdot}{\cdot})$ is 
    Vaisman beforehand, 
    we can use the results in \cite{Saw07}. 
    In \cite[Theorem 5.6]{Saw07}, 
    it is proved that 
    a unimodular meta-abelian Vaisman Lie algebra
    with $\theta \neq 0$ is isomorphic to
    $(\R \times \mathfrak{h}_{2d+1},J_0,\met{\cdot}{\cdot}_0)$, 
    up to scalar multiple. 
\end{remark}

\subsection{Non-Vaisman case}

In this subsection, 
we assume that $(\fg,J,\met{\cdot}{\cdot})$ is not Vaisman. 
Under the assumption we have $\theta|_{\fn}=0$
from Corollary \ref{Cor:classification when theta neq 0}. 

To state the main theorem of this section, 
we give some definitions. 
Denote by $\fa$ the maximal $J$-invariant subspace 
in $\fn$, 
that is, 
$\fa=\fn \cap J\fn$. 
We define the space $\fk$ 
as the orthogonal complement of $\fa$ 
in $\fn$. 
Denote by $s, 2t$ the dimension of $\fk,\fa$ 
respectively. 
Let $P \colon \fg \to \fm$ be the  
(not necessarily orthogonal) projection with respect to the decomposition 
$\fg=\fm\ltimes_{d\phi} \fn$. 
The following lemma is immediate consequence of the maximality of 
the space $\fa$.  

\begin{lemma}\label{Lem:PJ is injective}
    $PJ|_{\fk} \colon \fk \to \fm$ is injective. 
\end{lemma}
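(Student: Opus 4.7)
The plan is to argue by contradiction (or equivalently, by direct verification on the kernel) using only the definition of $\fa$ and the orthogonality of $\fk$ and $\fa$; the maximality of $\fa$ is built into the equality $\fa=\fn\cap J\fn$ and will be the key input.

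First I would take $x\in\fk$ with $PJx=0$. Since $P$ is the projection onto $\fm$ with kernel $\fn$, the condition $PJx=0$ is exactly the statement $Jx\in\fn$. Applying $J$ once more and using $J^2=-\id_\fg$, I get $x=-J(Jx)\in J\fn$. Combined with $x\in\fk\subset\fn$, this gives $x\in\fn\cap J\fn=\fa$.

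Next, since $\fk$ is defined as the orthogonal complement of $\fa$ in $\fn$, the intersection $\fk\cap\fa$ is trivial. Therefore $x=0$, which shows that $\ker(PJ|_\fk)=0$ and hence $PJ|_\fk$ is injective.

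There is essentially no obstacle here: the claim is a direct consequence of the two facts that (i) $Jx\in\fn\Leftrightarrow PJx=0$ under the decomposition $\fg=\fm\ltimes_{d\phi}\fn$, and (ii) $\fa=\fn\cap J\fn$ is precisely the obstruction to $J$ sending a vector in $\fn$ out of $\fn$. The only thing worth stating carefully is why $\fa$ being the maximal $J$-invariant subspace contained in $\fn$ is the same as $\fa=\fn\cap J\fn$: indeed, any $J$-invariant subspace $V\subset\fn$ satisfies $V=JV\subset J\fn$, hence $V\subset\fn\cap J\fn$, while conversely $\fn\cap J\fn$ is itself $J$-invariant by the argument above applied to its elements. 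This makes the short computation above a complete proof.
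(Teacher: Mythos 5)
Your proof is correct and is precisely the argument the paper leaves implicit when it calls the lemma an ``immediate consequence of the maximality of $\fa$'': $PJx=0$ forces $Jx\in\fn$, hence $x\in\fn\cap J\fn=\fa$, and $\fk\cap\fa=\{0\}$ gives $x=0$. Your added remark identifying the maximal $J$-invariant subspace of $\fn$ with $\fn\cap J\fn$ is a correct and harmless elaboration of the paper's definition.
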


We have $0\le n-2t=s \le m$ from this lemma. 
In the next theorem, 
we assume that the map in Lemma 
\ref{Lem:PJ is injective} is bijective, 
or equivalently, 
$\fn+J\fn=\fg$. 

\begin{theorem}\label{Thm:classification of LCK meta-abelian}
    Let $\fg=\R^m\ltimes_{d\phi}\R^n$ be a unimodular 
    meta-abelian Lie algebra and 
    $(\fg, J, \met{\cdot}{\cdot})$ be a non-Vaisman LCK structure. 
    If $\fn+J\fn=\fg$, 
    then $\fg$ is an LCK OT-like Lie algebra of type $(s,t)$, 
    where $m=s, n=s+2t$. 
    Moreover, 
    the LCK structure $(\fg, J, \met{\cdot}{\cdot})$ is 
    isomorphic to $(\fg_C,J_C,\met{\cdot}{\cdot}_C)$ for 
    some matrix $C \in \Mat_{t\times s}(\C)$, 
    up to scalar multiple. 
\end{theorem}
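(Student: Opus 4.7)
The plan is to construct an explicit Lie algebra isomorphism $\fg\xrightarrow{\sim}\fg_C$ for some $C\in\Mat_{t\times s}(\C)$ with $\Re c_{ij}=-1/(2t)$, under which the complex structures correspond; the metric correspondence up to scalar will follow a posteriori because the basis of $\fg$ is constructed directly from the LCK data and will be forced to match the standard LCK basis of $\fg_C$ up to an overall scaling. First I collect basic consequences of the hypotheses: the assumption $\fn+J\fn=\fg$ combined with Lemma \ref{Lem:PJ is injective} gives $\dim\fm=\dim\fk=s$ and $\dim\fa=2t$, hence $n=s+2t$; the non-Vaisman hypothesis together with Corollary \ref{Cor:classification when theta neq 0} forces $\theta|_\fn=0$, so $\theta\in\fm^{*}$ is a nonzero linear functional.

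The core of the proof is extracting a basis in which the Lie bracket matches that of $\fg_C$. Using the integrability identity $N_J(u,a)=0$ for $u\in\fm$ and $a\in\fa$, combined with $J\fa=\fa$ and the abelianness of $\fn$, one derives that $d\phi(u)a\in\fn\cap J\fn=\fa$ and that $d\phi(u)|_\fa$ commutes with $J|_\fa$; viewing $\fa\simeq\C^{t}$ via $J$, this presents $\{d\phi(u)|_\fa\}_{u\in\fm}$ as a commuting family of $\C$-linear endomorphisms. Next I would replace the complement $\fm$ by an abelian complement $\fm'\subset J\fn$, taken in the form $\{Jv+\alpha(v):v\in\fk\}$ for a suitable linear map $\alpha\colon\fk\to\fa$; existence of such $\fm'$ reduces to a splitting argument using that $\fa$ is an ideal. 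For $\fm'$ of this form, the LCK identity $d\omega(u,k,a)=\theta(u)\omega(k,a)$ applied to $u\in\fm'$, $k\in\fk$, $a\in\fa$, together with the non-degeneracy of $\omega|_\fa$, forces $d\phi(u)\fk\subset\fk$. Simultaneous diagonalization of $\{d\phi(u)|_\fk\}_{u\in\fm'}$ and $\{d\phi(u)|_\fa\}_{u\in\fm'}$ then yields bases $\{v_1,\ldots,v_s\}\subset\fk$ and $\{z_1,\ldots,z_t\}\subset\fa$ of common eigenvectors; passing to the corresponding dual basis gives $\{u_1,\ldots,u_s\}\subset\fm'$ with $d\phi(u_i)v_j=\delta_{ij}v_j$ and $d\phi(u_i)z_k=c_{ki}z_k$ for some $c_{ki}\in\C$. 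A final adjustment modifying $\{v_i\}$ by $J$-images of elements of $\fa$ achieves $Ju_i=v_i$.

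To conclude, unimodularity gives $\tr(\ad_{u_i})=1+2\sum_k\Re c_{ki}=0$, so $\sum_k\Re c_{ki}=-1/2$ for every $i$, and hence $C=(c_{ij})$ defines an OT-like Lie algebra $\fg_C$ in the sense of Definition \ref{Def:OT-like}. The LCK identity applied to the triples $(u_i,x_k,y_k)$ with $z_k=x_k+\sqrt{-1}\,y_k$ yields $2\Re c_{ki}=-\theta(u_i)$; combined with $\sum_k\Re c_{ki}=-1/2$, this forces $\Re c_{ij}=-1/(2t)$ for all $i,j$, so $\fg_C$ is LCK OT-like. The linear map sending the constructed basis of $\fg$ to the standard basis of $\fg_C$ is then the desired Lie algebra isomorphism intertwining $J$ and $J_C$, and matching the normalizations coming from $\theta$ in the two structures identifies the inner products up to an overall positive scalar (as in Proposition \ref{Prop:LCK metric on LCK OT-like algebras}).

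The main obstacle is the construction of the adapted abelian complement $\fm'\subset J\fn$ and ensuring that the simultaneous diagonalization on $\fk$ produces the non-degenerate Kronecker-delta pattern $d\phi(u_i)v_j=\delta_{ij}v_j$ rather than an arbitrary diagonal form. The former requires a careful splitting argument which uses $\fa$ being a $J$-invariant ideal and relies on a dimension count that the imposed equations on $\alpha$ are consistent; the latter requires the joint-eigenvalue map $\fm'\to\R^{s}$ on $\fk$ to be bijective, which is ultimately forced by the non-Vaisman hypothesis, since the nonvanishing of $\theta\in\fm^{*}$ rules out elements of $\fm'$ acting trivially on $\fk$. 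Once these choices are in place, the remaining verification amounts to matching structure constants with those of $\fg_C$.
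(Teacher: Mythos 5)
Your opening steps (the decomposition $\fn=\fk\oplus\fa$ with $\fa=\fn\cap J\fn$, the complex-linearity of $d\phi(u)|_{\fa}$, and the fact that both $\fa$ and $\fk$ are invariant under $d\phi$) do track the paper's Lemmas \ref{Lem:commutativity of J and dphi on a}--\ref{Lem:k is ideal}. But the heart of your argument — ``simultaneous diagonalization of $\{d\phi(u)|_{\fk}\}$ \ldots yields bases of common eigenvectors \ldots with $d\phi(u_i)v_j=\delta_{ij}v_j$'' — is asserted, not proved, and this is exactly the hard part. A commuting family of real operators on $\fk$ need not be diagonalizable over $\R$ at all (complex eigenvalues, Jordan blocks), and nothing in your proposal rules this out; your closing remark that bijectivity of the joint-eigenvalue map is ``forced by the non-Vaisman hypothesis, since the nonvanishing of $\theta$ rules out elements acting trivially on $\fk$'' is not an argument (an element with $\theta(u)\neq 0$ could a priori act trivially on $\fk$ and nontrivially on $\fa$, or act on $\fk$ by a non-semisimple or rotational operator). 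The paper's proof exists precisely to close this gap: it analyzes the Lee vector $A$, shows $\Psi=\ad_A|_{\fk}$ is self-adjoint with a single positive eigenvalue $\lambda_0=\frac{t}{s}\|A\|^2$ (Lemmas \ref{Lem:A is self-adjoint}--\ref{Lem:A is in u}), uses this to prove the trace form $\kappa_{\fu}(u_1,u_2)=\tr(d\phi'(u_1)d\phi'(u_2))$ is positive definite (Lemma \ref{Lem:Killing is positive}, via $\kappa_{\fu}(u_1,u_2)=t\,\theta(-J[u_1,Ju_2])$ and Cauchy--Schwarz), and shows each $d\phi'(u)$ is $\kappa_{\fk}$-self-adjoint (Lemma \ref{Lem:kappa is self-adjoint}); only then does real simultaneous diagonalization follow, and the identity $\delta_{ij}=t\,a_{ij}\theta(u_j)$ coming from the same form is what forces the Kronecker-delta pattern $[u_i,v_j]=\delta_{ij}v_j$ and $\theta(u_i)=1/t$. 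You have no substitute for this machinery.

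Two further points. First, your abelian complement $\fm'=\{Jv+\alpha(v)\}$ is both unnecessary and problematic: the paper proves directly that $\fu=J\fk$ is already an abelian subalgebra (Lemma \ref{Lem:Jk is abelian}, using integrability, $\fk$ being an ideal, and injectivity of $PJ|_{\fk}$), and taking $\fu$ makes $Ju_i=v_i$ automatic; with a general $\alpha\neq 0$ one has $J\fm'\not\subset\fk$, and your ``final adjustment modifying $\{v_i\}$ by $J$-images of elements of $\fa$'' would move the $v_i$ out of $\fk$ and destroy the orthogonality used afterwards. Second, the metric statement is not a formality that ``follows a posteriori'': knowing the bracket and $J$ in your basis does not pin down $\met{\cdot}{\cdot}$ up to scalar. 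The paper must still compute, from the LCK identity applied to triples $(u_i,u_j,v_k)$, that $\met{u_i}{u_j}=\gamma\bigl(\tfrac1t+\delta_{ij}\bigr)$ (and similarly for $\met{v_i}{v_j}$), normalize $\gamma=t$, and combine this with the orthogonality of $\fu,\fk,\fa$ and the orthonormality of the $x_i,y_i$ to recover exactly $\omega_C$ of Proposition \ref{Prop:LCK metric on LCK OT-like algebras}; your proposal omits this computation entirely.
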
 

The next lemma shows that 
the assumption $\fn+J\fn=\fg$ 
is not needed when $m=1,2$. 

\begin{lemma}\label{Lem:assumption is not needed when m=1,2}
    Let $\fg=\R^m\ltimes_{d\phi}\R^n$ be a unimodular 
    meta-abelian Lie algebra and 
    $(\fg, J, \met{\cdot}{\cdot})$ be a non-Vaisman LCK structure. 
    If $m=1$ or $m=2$, 
    then we have $\fn+J\fn=\fg$. 
\end{lemma}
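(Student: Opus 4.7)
The plan is to extract two separate constraints on $s := \dim\fk$ which together pin down $s = m$ when $m \le 2$. Throughout one has $0 \le s \le m$ by Lemma~\ref{Lem:PJ is injective}, and the non-Vaisman hypothesis combined with Corollary~\ref{Cor:classification when theta neq 0} forces $\theta|_\fn = 0$ and $\theta \neq 0$ (the latter because $\theta = 0$ would make the metric K\"{a}hler, hence Vaisman).

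First I would record a parity constraint. The subspace $W := \fn + J\fn$ is visibly $J$-invariant, so $J$ descends to an endomorphism $\bar J$ of the real quotient $\fg/W$ still satisfying $\bar J^2 = -\id$. Any real vector space admitting such an endomorphism has even dimension, and since $\dim(\fg/W) = (m+n) - (n+s) = m-s$ by the computation already recorded in the excerpt, this forces $m-s$ to be even.

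Next I would rule out $s = 0$ for every $m$, using the LCK equation together with unimodularity. If $s = 0$ then $\fn$ is $J$-invariant, and because $J$ is an isometry the orthogonal complement $\fm' := \fn^\perp$ is also $J$-invariant, yielding an orthogonal, $J$-stable splitting $\fg = \fm' \oplus \fn$. Since $\fn$ is an ideal, $L_X := \ad_X|_\fn \colon \fn \to \fn$ is well defined for every $X \in \fm'$. A short computation using $[Z,W] = 0$ for $Z,W \in \fn$, $\theta|_\fn = 0$, and the vanishing of $\omega$ between $\fm'$ and $\fn$ (which holds because $J\fm' = \fm' \perp \fn$) reduces the LCK identity $d\omega(X,Z,W) = (\theta \wedge \omega)(X,Z,W)$ to
\[
\omega(L_X Z, W) + \omega(Z, L_X W) \;=\; -\theta(X)\,\omega(Z, W),
\]
so $L_X$ is conformal symplectic on $(\fn,\omega|_\fn)$ with conformal factor $-\theta(X)$. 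The standard fact that $\mathfrak{sp}(2t)$ is traceless then yields $\tr L_X = -t\,\theta(X)$. On the other hand $\ad_X$ maps $\fg$ into the ideal $\fn$, so $\tr_{\fg}(\ad_X) = \tr L_X$, and unimodularity forces this to vanish. Hence $\theta|_{\fm'} = 0$; together with $\theta|_\fn = 0$ this gives $\theta = 0$, contradicting the non-Vaisman hypothesis.

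Combining the two constraints: if $m = 1$ then $1 \le s \le 1$ immediately gives $s = 1$, while if $m = 2$ then $s \in \{1,2\}$ and parity excludes $s = 1$, leaving $s = 2$. In each case $s = m$, which as observed in the excerpt is equivalent to $\fn + J\fn = \fg$. The most delicate point in the write-up will be the derivation of the conformal symplectic identity above and the trace bookkeeping --- in particular keeping track of signs and noting that $L_X$ is a genuine endomorphism of $\fn$ even though $X$ lies in the metric complement $\fm'$ rather than in the algebraic Lie-algebra complement $\fm$.
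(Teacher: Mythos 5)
Your proposal is correct and follows essentially the same two-step route as the paper: a parity argument giving that $m-s$ is even, followed by ruling out $s=0$ by a trace-plus-unimodularity argument that forces $\theta$ to vanish, contradicting non-Vaisman. The only cosmetic difference is packaging: you phrase the trace step as a conformal-symplectic condition on $(\fn,\omega|_{\fn})$ for $X$ in the orthogonal complement $\fn^{\perp}$ and invoke tracelessness of $\mathfrak{sp}$, while the paper applies $d\omega=\theta\wedge\omega$ to triples $(u,x,Jy)$ with $u\in\fm$ and takes the trace of the resulting operator identity $-d\phi(u)-(-J(d\phi(u))J)^{*}=\theta(u)\id_{\fn}$; the two computations are the same in substance.
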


\begin{proof}
    We have $\theta|_{\fn}=0$ form Corollary 
    \ref{Cor:classification when theta neq 0}. 
    Notice that the parity of $s$ and $m$ is the same 
    since $n+m$, 
    the dimension of $\fg$, 
    is even. 
    Hence the assertion is trivial when $m=1$. 

    We assume by contradiction that 
    $m=2$ and $s=0$. 
    This is the case where $J\fn=\fn$. 
    Take any $u \in \fm$ such that $\theta(u) \neq 0$
    and $x,y \in \fn$. 
    Then we have 
    \[
    \begin{aligned}
        d\omega(u,x,Jy) &= -\omega([u,x],Jy)-\omega(x,[u,Jy]) \\
        &= -\met{[u,x]}{y}-\met{x}{-J[u,Jy]}, \\
        \theta \w \omega (u,x,Jy) &= \theta(u)\met{x}{y}. 
    \end{aligned}
    \]
    So we obtain  
    \[
    -d\phi(u) - (-J (d\phi(u)) J)^* = \theta(u) \id_{\fn}, 
    \]
    where $(-J (d\phi(u)) J)^*$ denotes the adjoint of 
    $-J (d\phi(u)) J$ with respect to $\met{\cdot}{\cdot}$. 
    By taking the trace of this endomorphism, 
    we have
    \[
    -\tr(d\phi(u)) - \tr((-J(d\phi(u))J)^*) = n \theta(u). 
    \]
    However, 
    $\tr(d\phi(u))=0$ by the unimodularity, 
    and we also have 
    \[
    \tr((-J(d\phi(u))J)^*)=\tr(-J(d\phi(u))J)
    =\tr((d\phi(u))J(-J))=\tr(d\phi(u))=0. 
    \]
    Hence we obtain $0=n\theta(u) \neq0$, 
    which is a contradiction. 
\end{proof}

By combining the results of Corollary 
\ref{Cor:LCK OT-like with lattice is OT}, 
Corollary \ref{Cor:classification when theta neq 0}, 
Theorem \ref{Thm:classification of LCK meta-abelian} and Lemma \ref{Lem:assumption is not needed when m=1,2}, 
we have the following 

\begin{theorem}\label{Thm:classification when m=1,2}
    Let $G=\R^m\ltimes_{\phi}\R^n$ be a unimodular 
    meta-abelian Lie group with $m=1$ or $m=2$. 
    If $G$ admits a left-invariant non-Vaisman LCK metric 
    $(G,J,g)$ 
    and a lattice, 
    then we have $n=m+2$ and 
    $G$ is associated to some OT manifold of type $(m,1)$. 
    Moreover, 
    $(G,J,g)$ is isomorphic to $(G_C,J_C,g_C)$ for 
    some $C \in \Mat_{1\times m}(\C)$. 
\end{theorem}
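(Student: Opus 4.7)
The plan is to combine four previously established results in a direct chain. Since $G$ admits a lattice, Lemma \ref{Lem:admitting lattice then unimodular} gives that $\fg = \R^m \ltimes_{d\phi} \R^n$ is unimodular, and the left-invariant LCK metric $g$ corresponds on the Lie-algebra level to a non-Vaisman LCK structure $(\fg, J, \met{\cdot}{\cdot})$ whose Lee form I denote $\theta$. Because $G$ is the simply connected Lie group associated to $\fg$, any isomorphism of LCK Lie algebras obtained below will integrate to an isomorphism of left-invariant LCK Lie groups.

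First I would dispose of the Lee form. If $\theta|_\fn \neq 0$, then Corollary \ref{Cor:classification when theta neq 0} forces $(\fg, J, \met{\cdot}{\cdot})$ to be isomorphic, up to scalar, to the Vaisman example $(\R \times \fh_{2d+1}, J_0, \met{\cdot}{\cdot}_0)$, contradicting the non-Vaisman hypothesis. Hence $\theta|_\fn = 0$. Since $m \in \{1, 2\}$, Lemma \ref{Lem:assumption is not needed when m=1,2} then yields $\fn + J\fn = \fg$, which is precisely the hypothesis required to invoke the main meta-abelian classification.

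Applying Theorem \ref{Thm:classification of LCK meta-abelian} produces integers $(s, t)$ with $m = s$ and $n = s + 2t$, a complex matrix $C \in \Mat_{t \times s}(\C)$, and an isomorphism $(\fg, J, \met{\cdot}{\cdot}) \simeq (\fg_C, J_C, \met{\cdot}{\cdot}_C)$ up to rescaling. This exhibits $G$ as an LCK OT-like Lie group, so Corollary \ref{Cor:LCK OT-like with lattice is OT} now applies and forces $t = 1$; consequently $n = m + 2$, $C \in \Mat_{1 \times m}(\C)$, and $G$ is associated to some OT manifold $X(K, U)$ of type $(m, 1)$ of simple type. There is no genuine obstacle at this final stage: the deep content, namely the number-theoretic reconstruction of $(K, U)$ from a simple lattice in Theorem \ref{Thm:OT-like Lie group with simple lattice is constructed by OT}, the automatic simplicity of lattices in the LCK OT-like setting (Proposition \ref{Prop:LCK OT-like is simple}), and the meta-abelian classification itself, has already been carried out in the preceding sections. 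The only subtlety worth flagging is that the Lie-algebra isomorphism is only obtained up to rescaling the metric, which is harmless because a positive scalar multiple of an LCK metric remains LCK with the same (scaled) Lee form.
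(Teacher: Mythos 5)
Your proposal is correct and follows essentially the same route as the paper, which proves this theorem precisely by chaining Corollary \ref{Cor:classification when theta neq 0}, Lemma \ref{Lem:assumption is not needed when m=1,2}, Theorem \ref{Thm:classification of LCK meta-abelian}, and Corollary \ref{Cor:LCK OT-like with lattice is OT} in the same order. The only cosmetic difference is that your separate disposal of the case $\theta|_{\fn}\neq 0$ is already subsumed in Lemma \ref{Lem:assumption is not needed when m=1,2}, so it is a harmless redundancy.
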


The rest of this section is devoted to the proof of Theorem
\ref{Thm:classification of LCK meta-abelian}. 
We write $\fg$ as 
$\fm \ltimes_{d\phi} (\fk \oplus \fa)$. 
By the assumption $\fn+J\fn=\fg$, 
which is equivalent to saying that $PJ|_{\fk}$ is bijective, 
we have $\dim \fm=\dim \fk=s$. 
Since we also assume that 
$(\fg,J,\met{\cdot}{\cdot})$ is not Vaisman, 
we have $\theta \neq 0$ and $\theta|_{\fn}=0$. 

\begin{lemma}\label{Lem:commutativity of J and dphi on a}
    For all $u \in \fm$ and $x \in \fa$, 
    we have 
    \[
    (d\phi(u))Jx=J(d\phi(u))x. 
    \]
\end{lemma}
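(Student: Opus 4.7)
The plan is to read the identity directly off the vanishing of the Nijenhuis tensor, using that two of the three arguments can be arranged to sit inside the abelian ideal $\fn$, which instantly collapses most of the brackets.

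First I would fix $x \in \fk$ and $y \in \fa$, and expand
\[
N_J(x,y) = [x,y] - [Jx,Jy] + J[x,Jy] + J[Jx,y].
\]
Since $x$, $y$ and $Jy$ all lie in $\fn$ (using that $\fa$ is $J$-invariant), and $\fn$ is abelian, the first and third terms vanish outright: $[x,y] = 0$ and $[x,Jy] = 0$. To handle the remaining two terms I would decompose $Jx = P(Jx) + Q(Jx)$ according to $\fg = \fm \oplus \fn$, set $m := P(Jx) \in \fm$, and note that $Q(Jx) \in \fn$ commutes with every element of $\fn$. Thus $[Jx,y] = d\phi(m)y$ and $[Jx,Jy] = d\phi(m)Jy$, and the identity $N_J(x,y)=0$ reduces to
\[
-\,d\phi(m)Jy + J\,d\phi(m)y = 0,
\]
i.e.\ exactly the commutativity $d\phi(m)Jy = J\,d\phi(m)y$, but only for those $m \in \fm$ of the form $m = P(Jx)$ with $x \in \fk$.

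To upgrade this to arbitrary $u \in \fm$, I would invoke Lemma \ref{Lem:PJ is injective} together with the standing assumption $\fn + J\fn = \fg$, which ensures that $PJ|_\fk \colon \fk \to \fm$ is a bijection (injectivity is Lemma \ref{Lem:PJ is injective}, and the hypothesis forces the dimensions to match). Hence $m = P(Jx)$ sweeps out all of $\fm$ as $x$ ranges over $\fk$, and the asserted identity $d\phi(u)Jy = J\,d\phi(u)y$ holds for every $u \in \fm$ and every $y \in \fa$.

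I do not foresee a serious obstacle: the whole argument is a one-line consequence of integrability once the abelianness of $\fn$ wipes out three of the four terms in $N_J$. The only role of the hypothesis $\fn + J\fn = \fg$ is to make the parameter $m = P(Jx)$ surjective onto $\fm$; without that assumption, the same computation would still yield the commutativity, but only on the image of $PJ|_\fk$.
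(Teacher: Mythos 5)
Your proof is correct and is essentially the paper's own argument: the paper likewise takes $v\in\fk$ with $u=PJv$ (using bijectivity of $PJ|_{\fk}$, guaranteed by the standing assumption $\fn+J\fn=\fg$) and reads the identity off $N_J(v,x)=0$, with the abelianness of $\fn$ killing two terms and the relation $d\phi(Pz)=\ad_z$ on $\fn$ playing the role of your decomposition $Jx=P(Jx)+Q(Jx)$. No gaps; the two write-ups differ only in bookkeeping.
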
 

\begin{proof}
    We can take $v \in \fk$ so that 
    $u=PJv$ since $PJ|_{\fk}$ is bijective. 
    From the integrability of $J$, 
    we have 
    \[
    \begin{aligned}
        0 &= N_J(v,x) \\
        &= [v,x]-[Jv,Jx]+J[v,Jx]+J[Jv,x] \\
        &= 0 - (\ad_{Jv})(Jx) + 0 + J(\ad_{Jv})x \\
        &= -(d\phi(u))Jx + J(d\phi(u))x. 
    \end{aligned}
    \]
    In the last equality we used 
    $d\phi(Pz)=\ad_z$ on $\fn$ for all $z \in \fg$. 
\end{proof}

By the above lemma and the definition of the space $\fa$, 
we have the following 

\begin{corollary}\label{Cor:a is ideal}
    The space $\fa$ is an ideal of $\fg$. 
\end{corollary}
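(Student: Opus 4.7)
The plan is to verify $[\fg,\fa]\subset \fa$ by using that $\fn$ is abelian to handle the $\fn$-direction and invoking Lemma~\ref{Lem:commutativity of J and dphi on a} to handle the $\fm$-direction. Since $\fn$ is an abelian Lie algebra, the bracket $[\fn,\fa]=0$ is automatically contained in $\fa$, so the task reduces to showing $[\fm,\fa]\subset \fa$, i.e.\ that $(d\phi(u))(x)\in \fa$ for every $u\in\fm$ and $x\in\fa$.

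To establish this, I would use the characterization $\fa=\fn\cap J\fn$. The containment $(d\phi(u))(x)\in\fn$ is immediate because $\fn$ is an ideal of $\fg$. For the second containment $(d\phi(u))(x)\in J\fn$, I would apply Lemma~\ref{Lem:commutativity of J and dphi on a}, which gives $(d\phi(u))(Jx)=J(d\phi(u))(x)$. Since $x\in\fa\subset J\fn$, we have $Jx\in J(J\fn)= -\fn= \fn$, so $(d\phi(u))(Jx)\in\fn$ again by the fact that $\fn$ is an ideal. Rewriting this as $J(d\phi(u))(x)\in\fn$ and applying $J$ gives $(d\phi(u))(x)\in J\fn$, completing the argument. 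Combining both containments yields $(d\phi(u))(x)\in \fn\cap J\fn=\fa$.

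I expect no real obstacle here: the proof is essentially a direct bookkeeping exercise combining the definition $\fa=\fn\cap J\fn$ with Lemma~\ref{Lem:commutativity of J and dphi on a}. The only subtlety is remembering that $\fn$ is abelian (which kills the $[\fn,\fa]$ contribution at once) and that $J$ preserves $\fn$ precisely on the subspace $\fa$, which is exactly what is needed to move $(d\phi(u))$ across $J$.
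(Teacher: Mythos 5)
Your proof is correct and is exactly the argument the paper intends: the paper derives the corollary directly from Lemma \ref{Lem:commutativity of J and dphi on a} together with the definition $\fa=\fn\cap J\fn$, which is precisely your bookkeeping (the $[\fn,\fa]$ part vanishing since $\fn$ is abelian, and $d\phi(u)x\in\fn\cap J\fn$ via commuting $d\phi(u)$ with $J$). Nothing further is needed.
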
 

We can also show the following 

\begin{lemma}\label{Lem:k is ideal}
    The space $\fk$ is an ideal of $\fg$. 
\end{lemma}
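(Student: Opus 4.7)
The plan is to show that $[\fg, \fk] \subset \fk$. Since $\fn$ is abelian, we have $[\fn, \fk] = 0 \subset \fk$ automatically, so the only content is to prove $[\fm, \fk] \subset \fk$. Concretely, for arbitrary $u \in \fm$ and $v \in \fk$, the bracket $[u,v] = d\phi(u) v$ is already in $\fn$, so it suffices to show that this vector is orthogonal to $\fa$, i.e.\ that $\met{d\phi(u) v}{x} = 0$ for every $x \in \fa$.

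To extract this inner product I would apply the LCK condition $d\omega = \theta \w \omega$ to the triple $(u, v, Jx)$, noting that $Jx \in \fa$ by the $J$-invariance of $\fa$. On the $d\omega$ side, among the three bracket terms the bracket $[v, Jx]$ vanishes because both entries lie in the abelian ideal $\fn$; the bracket $[Jx, u] = -d\phi(u) Jx$ lies in $\fa$ by Corollary~\ref{Cor:a is ideal}, and using Lemma~\ref{Lem:commutativity of J and dphi on a} its image under $J$ simplifies to $-d\phi(u) x$, which is again in $\fa$ and hence orthogonal to $v \in \fk$. Thus only the $\omega([u,v], Jx) = \met{[u,v]}{x}$ term survives, giving $d\omega(u,v,Jx) = -\met{d\phi(u)v}{x}$.

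On the $\theta \w \omega$ side, I will exploit the standing assumption $\theta|_\fn = 0$ (available from Corollary~\ref{Cor:classification when theta neq 0} in the non-Vaisman setting): both $\theta(v)$ and $\theta(Jx)$ vanish because $v \in \fk \subset \fn$ and $Jx \in \fa \subset \fn$. The remaining term $\theta(u)\,\omega(v, Jx) = \theta(u) \met{v}{x}$ vanishes by the orthogonality $\fk \perp \fa$. Hence $\theta \w \omega(u, v, Jx) = 0$, and comparing with the previous computation yields $\met{d\phi(u) v}{x} = 0$.

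This is essentially a bookkeeping argument; the only mild subtlety is making sure the non-obvious term $\omega([Jx,u], v)$ really disappears, which is exactly where the previously established commutation relation between $d\phi$ and $J$ on $\fa$, together with the ideal property of $\fa$, do the work. I do not anticipate a substantial obstacle: once the correct triple $(u,v,Jx)$ is chosen, every piece is forced to cancel by the structural lemmas already proven.
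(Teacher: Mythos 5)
Your proposal is correct and follows essentially the same route as the paper: evaluate $d\omega=\theta\w\omega$ on the triple $(u,v,Jx)$, kill the extra bracket terms using Lemma \ref{Lem:commutativity of J and dphi on a}, Corollary \ref{Cor:a is ideal}, $\fk\perp\fa$, and $\theta|_{\fn}=0$, and conclude $\met{[u,v]}{x}=0$. (The only quibble is a harmless sign: $J[Jx,u]=d\phi(u)x$ rather than $-d\phi(u)x$, which does not affect the vanishing.)
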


\begin{proof}
    For all $u \in \fm$, 
    $v \in \fk$ and $x \in \fa$, 
    we have 
    \[
    \begin{aligned}
        d\omega(u,v,Jx) &= -\omega([u,v],Jx)-\omega(v,[u,Jx]) \\
        &= - \met{[u,v]}{x} - \omega(v,J[u,x]) \\
        &= - \met{[u,v]}{x} - 0. 
    \end{aligned}
    \]
    We used Lemma \ref{Lem:commutativity of J and dphi on a} 
    in the second equality, 
    and Corollary \ref{Cor:a is ideal} and $\fk\perp \fa$ 
    in the last equality. 
    We also have 
    \[
    \theta \w \omega (u,v,Jx) = \theta(u)\met{v}{x} = 0. 
    \]
    Hence we obtain $\met{[u,v]}{x}=0$. 
    By the definition of $\fk$, 
    we have $[u,v] \in \fk$. 
\end{proof}

Therefore, 
the representation $d\phi \colon \fm 
\to \End(\fk \oplus \fa)$ is decomposed into 
$d\phi \colon \fm 
\to \End(\fk) \oplus \End(\fa)$. 
From the $J$-invariance of $\fa$ and 
Lemma \ref{Lem:commutativity of J and dphi on a}, 
we can say that 
$\fm$ acts on $\fa$ as complex linear maps 
with respect to the $\C$-linear structure of $(\fa,J)$.

For all $u \in \fm$ and $x,y \in \fa$, 
we have 
\[
\begin{aligned}
    d\omega (u,x,Jy) &= -\met{d\phi(u)x}{y} - \met{x}{d\phi(u)y}, \\
    \theta \w \omega (u,x,Jy) &= \theta(u)\met{x}{y}. 
\end{aligned}
\]
So $d\phi(u)|_{\fa} + \frac{1}{2}\theta(u)$ acts as a skew-Hermitian operator 
on the Hermitian space $(\fa,J,\met{\cdot}{\cdot})$. 
Since these operators 
$\{d\phi(u)|_{\fa} + \frac{1}{2}\theta(u)\}_{u\in\fm}$
are commutative, 
there exists an orthogonal basis $(x_1,y_1,\ldots,x_t,y_t)$ of $\fa$ 
with $Jx_i=y_i$ for all $1\le i \le t$ 
such that the operators 
$\{d\phi(u)|_{\fa} + \frac{1}{2}\theta(u)\}_{u\in\fm}$ 
have the following matrix representation: 
\begin{equation}\label{Equ:diagonalization of dphi on a}
d\phi(u)|_{\fa}+ \frac{1}{2}\theta(u)= \text{diag}( 
\sqrt{-1}\lambda^1_u, \ldots , \sqrt{-1}\lambda^t_u), 
\end{equation}
where $\lambda^i_u$ is a real number. 
Hence the traces of these operators are 
\[
\tr(d\phi(u)|_{\fa})+t \theta(u)=0. 
\]
By the unimodularity of $\fg$, 
we have 
\begin{equation}\label{Equ:trace of dphi on k}
\tr(d\phi(u)|_{\fk}) = -\tr(d\phi(u)|_{\fa}) = t\theta(u). 
\end{equation}

By the following lemma, 
$\fm$ can be replaced by $\fu \coloneqq J \fk$. 

\begin{lemma}\label{Lem:Jk is abelian}
    $\fu = J \fk \subset \fg$ is an abelian 
    Lie subalgebra. 
\end{lemma}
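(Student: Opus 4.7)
The plan is to exhibit $[Jv_1, Jv_2]$ for arbitrary $v_1, v_2 \in \fk$ as a common element of $\fu$ and $\fk$, and to argue that $\fu \cap \fk = \{0\}$. This simultaneously yields closedness of $\fu$ under the bracket and the vanishing of every such bracket, giving both parts of the lemma in one shot.

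First I would pin down the ambient position of $\fu$. Because $J$ is an isometry and $\fk \perp \fa$, we have $\fu = J\fk \perp J\fa = \fa$, so every element of $\fu$ lies in $\fm \oplus \fk$. The hypothesis $\fn + J\fn = \fg$, combined with Lemma \ref{Lem:PJ is injective}, says that $PJ|_{\fk} \colon \fk \to \fm$ is a bijection; this forces $\fu$ to be the graph over $\fm$ of a linear map $\fm \to \fk$. In particular $\fu \cap \fk = \{0\}$.

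Next I would compute $[Jv_1, Jv_2]$ in two different ways. Writing $Jv_i = u_i + k_i$ with $u_i \in \fm$ and $k_i \in \fk$, the semidirect-product structure together with the abelianness of $\fm$ and $\fn$ and Lemma \ref{Lem:k is ideal} unfolds the bracket into a $\fk$-valued expression of the form $d\phi(u_1)k_2 - d\phi(u_2)k_1$. Independently, the vanishing of the Nijenhuis tensor, together with $[v_1, v_2] = 0$ (since $v_1, v_2 \in \fn$ which is abelian), rewrites the same bracket as $J$ applied to $[v_1, Jv_2] + [Jv_1, v_2]$, both of which land in $\fk$ by the same ideal property; hence the bracket lies in $J\fk = \fu$. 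Combining the two descriptions, $[Jv_1, Jv_2] \in \fu \cap \fk = \{0\}$.

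The main obstacle, such as it is, is really just the graph description of $\fu$ in the first step: this is precisely where the hypothesis $\fn + J\fn = \fg$ does essential work, and without it $\fu \cap \fk$ could be nontrivial and the whole strategy would collapse. Everything after that is a direct bracket calculation in $\fm \ltimes_{d\phi} (\fk \oplus \fa)$ combined with the integrability condition, with no further input from the LCK identity $d\omega = \theta \w \omega$ needed.
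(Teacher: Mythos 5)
Your second step---the Nijenhuis computation $[Jv_1,Jv_2]=J\bigl([v_1,Jv_2]+[Jv_1,v_2]\bigr)$ with $[v_1,Jv_2],[Jv_1,v_2]\in\fk$ by Lemma \ref{Lem:k is ideal}---is exactly the mechanism of the paper's proof. The genuine problem is your first step: from $\fu\perp\fa$ you conclude $\fu\subset\fm\oplus\fk$, hence the graph description of $\fu$ and the decomposition $Jv_i=u_i+k_i$ with $k_i\in\fk$. This does not follow. The splitting $\fg=\fm\oplus\fn$ is not orthogonal (the paper explicitly takes $P$ to be a not necessarily orthogonal projection), so $\fa^{\perp}$ need not equal $\fm\oplus\fk$; orthogonality to $\fa$ kills the $\fa$-component of $Jv$ only if $\fm\perp\fa$, which is not known at this stage. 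It can in fact fail: starting from a standard LCK OT-like algebra one may re-present $\fg$ as $\fm'\ltimes\fn$ with $\fm'$ the graph of a linear map $\fm\to\fa$ (already possible for $s=t=1$); all hypotheses of Theorem \ref{Thm:classification of LCK meta-abelian} still hold, $\fk$ and $\fa$ are unchanged, but $J\fk\not\subset\fm'\oplus\fk$. Consequently your claim that $[Jv_1,Jv_2]=d\phi(u_1)k_2-d\phi(u_2)k_1\in\fk$ is unjustified: a priori the bracket only lies in $\fk\oplus\fa$, and your intersection $\fu\cap\fk=\{0\}$ is then not enough as stated.

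The gap is repairable, and the repair essentially lands you on the paper's argument. You need neither the graph picture nor membership of the bracket in $\fk$ (and, incidentally, $\fu\cap\fk=\{0\}$ follows from the maximality $\fa=\fn\cap J\fn$ alone, not from the hypothesis $\fn+J\fn=\fg$): since $\fm$ and $\fn$ are abelian and $\fn$ is an ideal, $[\fg,\fg]\subset\fn$, so $[Jv_1,Jv_2]\in\fn$ automatically; and $\fu\cap\fn=\{0\}$, because $w\in\fk$ with $Jw\in\fn$ forces $w\in\fn\cap J\fn=\fa$, hence $w\in\fk\cap\fa=\{0\}$. This last statement is precisely the injectivity of $PJ|_{\fk}$ (Lemma \ref{Lem:PJ is injective}) that the paper invokes after applying $P$ to $N_J(v_1,v_2)=0$; with that substitution your proof coincides with the paper's.
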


\begin{proof}
    By the integrability of $J$, 
    for all $v_1,v_2 \in \fk$ we have 
    \[
    0=P(N_J(v_1,v_2))
    =PJ([v_1,Jv_2]+[Jv_1,v_2]). 
    \]
    Since $\fk$ is an ideal and $PJ|_{\fk}$ is injective, 
    we have $[v_1,Jv_2]+[Jv_1,v_2]=0$. 
    Therefore, 
    we obtain 
    \[
    [Jv_1,Jv_2] = - N_J(v_1,v_2) + [v_1,v_2] + [v_1,Jv_2]+[Jv_1,v_2]=0,  
    \]
    which completes the proof. 
\end{proof}

Since $P|_{\fu} \colon \fu \to \fm$ is 
bijective, 
we can replace $\fm$ with $\fu$. 
More precisely, 
we express $\fg$ as 
$\fu \ltimes_{d\phi} (\fk \oplus \fa)$, 
where $d\phi$ is regarded as a map 
$\fu \to \End(\fk) \oplus \End(\fa)$ 
through the isomorphism $P|_{\fu}$. 
Since $\fk \perp\fa$, 
we also have $\fu \perp \fa$. 
Thus we obtain $A \in \fu \oplus \fk$, 
where $A$ denotes the Lee vector. 

We define 
$\fg' \coloneqq \fu \ltimes_{d\phi'} \fk$, 
where $d\phi'(u) \coloneqq d\phi(u)|_{\fk}$ 
for all $u \in \fu$. 
The Lie algebra $\fg'$ is a $J$-invariant subalgebra of $\fg$. 
The Lie algebra $\fg'$ inherits the LCK structure
$(\fg',J,\met{\cdot}{\cdot})$ obtained by restricting the 
LCK structure of $\fg$. 
From the integrability of $J$ and $J\fu=\fk$, 
it follows that $J$ is abelian on $\fg'$. 
The properties of $(\fg',J,\met{\cdot}{\cdot})$ are summarized as follows: 
\begin{itemize}
    \item $\theta|_{\fu} \neq 0, \quad \theta|_{\fk}=0$, 
    \item $J$ is abelian, 
    \item $J\fu=\fk$, 
    \item $\tr(d\phi'(u))=t\theta(u)$. 
\end{itemize}

The last one is obtained by (\ref{Equ:trace of dphi on k}). 
Note that $\fg'$ is not unimodular. 
The Lee vector $A$ is also the Lee vector of 
the LCK Lie algebra $(\fg',J,\met{\cdot}{\cdot})$. 
Consider the operator $\Psi \in \End(\fk)$ given by 
$\Psi x \coloneqq [A,x]$ 
for all $x \in \fk$. 

\begin{lemma}
[cf. \cite{AO15}, Lemma 3.4]
\label{Lem:A is self-adjoint}
    The operator $\Psi$ is self-adjoint. 
\end{lemma}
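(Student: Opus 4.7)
The plan is to adapt the argument of \cite{AO15}, Lemma 3.4, which is explicitly referenced for this lemma. First I would decompose $A=u_0+v_0$ along the direct sum $\fg'=\fu\oplus\fk$, with $u_0\in\fu$ and $v_0\in\fk$. Because $\fk$ is abelian, $[v_0,x]=0$ for every $x\in\fk$, so $\Psi x=[A,x]=[u_0,x]=d\phi'(u_0)x$. The lemma therefore reduces to showing that $d\phi'(u_0)$ is self-adjoint on $\fk$ with respect to the restricted inner product.

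Next I would exploit the LCK equation $d\omega=\theta\w\omega$ applied to the triple $(u,x,y)$ for $u\in\fu$ and $x,y\in\fk$; this is the same style of computation the authors have already used in this section (to diagonalize $d\phi$ on $\fa$ and to prove that $\fk$ is an ideal). Using $[x,y]=0$, $[u,x],[u,y]\in\fk$, together with $\theta(x)=\theta(y)=0$, one arrives at
\begin{equation*}
\met{d\phi'(u)x}{Jy}-\met{d\phi'(u)y}{Jx}=\theta(u)\met{Jx}{y}.
\end{equation*}
To convert this into a statement about $\met{\Psi x}{y}$ rather than about $\met{\Psi x}{Jy}$, I would invoke the integrability of $J$ on $\fg'$. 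Since $J$ is abelian on $\fg'$, the relation $[JX,JY]=[X,Y]$ applied to $X\in\fk$, $Y\in\fu$ yields the key symmetry $d\phi'(Jx)y=d\phi'(Jy)x$ for $x,y\in\fk$ (equivalently $d\phi'(u)(Jv)=d\phi'(v)(Ju)$ for $u,v\in\fu$). Substituting $u=Jz$ for $z\in\fk$ into the identity above and applying this symmetry — combined with the constraint $A\perp\fk$ that follows from $\theta|_\fk=0$ — should give $\met{\Psi x}{y}=\met{\Psi y}{x}$.

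The hard part will be that the direct-sum decomposition $\fg'=\fu\oplus\fk$ is not orthogonal in general: only $\fu\perp\fa$ is automatic from the maximality of $\fa$, so the cross-terms $\met{\fu}{\fk}$ pervade every application of the LCK identity. Tracking these cross-terms and showing that they cancel upon combining several instances of the identity is the delicate technical step; here the closedness $d\theta=0$ — which in the Lie algebra setting says that $A$ is perpendicular to $[\fg',\fg']\subset\fk$ — must be used in tandem with the LCK equation itself.
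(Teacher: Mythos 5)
Your preliminary steps are fine: the reduction $\Psi=d\phi'(u_0)$ for the $\fu$-component $u_0$ of $A$ is correct (though not needed), the identity $\met{d\phi'(u)x}{Jy}-\met{d\phi'(u)y}{Jx}=\theta(u)\met{Jx}{y}$ is a correct evaluation of $d\omega=\theta\w\omega$ on $(u,x,y)$, and the symmetry $d\phi'(Jx)y=d\phi'(Jy)x$ does follow from the abelian property of $J$ on $\fg'$. The gap is in the conversion step you leave open. Every inner product occurring in your identity, in that symmetry, and in the constraint $A\perp\fk$ pairs an element of $\fk$ with an element of $\fu=J\fk$: the $J$-invariance of the metric turns cross-pairings into cross-pairings (since $J$ interchanges $\fu$ and $\fk$), substituting $u=Jz$ keeps you inside the cross block, and $d\theta=0$ gives nothing beyond $\theta|_{\fn}=0$, i.e.\ $A\perp\fk$, which is again a cross statement. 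The conclusion you need, $\met{\Psi x}{y}=\met{x}{\Psi y}$ for $x,y\in\fk$, is an assertion about the restriction of the metric to $\fk\times\fk$, a block that never appears in the equations you allow yourself; so the ``delicate cancellation of cross-terms'' you defer is not a technicality but an impossibility with the listed inputs. What is missing is at least one further instance of the LCK identity whose evaluation actually produces $\fk$--$\fk$ (equivalently $\fu$--$\fu$) pairings, for example on triples of the form $(u,x,Jy)$.

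The paper's proof chooses that instance so that no cross-terms arise at all: evaluate $d\omega=\theta\w\omega$ on $(JA,u_1,u_2)$ with $u_1,u_2\in\fu$. Since $\fu$ is abelian and $J$ is abelian on $\fg'$, one has $[JA,u_i]=-[A,Ju_i]=-\Psi(Ju_i)$, so the left-hand side equals $-\met{\Psi Ju_1}{Ju_2}+\met{Ju_1}{\Psi Ju_2}$; on the right-hand side $\theta(JA)=\met{A}{JA}=0$ and $\omega(JA,u_i)=-\theta(u_i)$, so the two surviving terms cancel and the right-hand side vanishes identically. This yields the self-adjointness in one stroke, with no decomposition of $A$, no bookkeeping of $\met{\fu}{\fk}$ terms, and no orthogonality assumption between $\fu$ and $\fk$ (which indeed is only proved afterwards, using this lemma).
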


\begin{proof}
    For all $u_1,u_2 \in \fu$, 
    we have
    \[
    \begin{aligned}
        d\omega(JA,u_1,u_2) 
        &= -\omega([JA,u_1],u_2)-\omega(u_1,[JA,u_2]) \\
        &= \omega([A,Ju_1],u_2)+\omega(u_1,[A,Ju_2]) \\
        &= -\met{\Psi Ju_1}{Ju_2} + \met{Ju_1}{\Psi J u_2},  \\
        \theta \w \omega(JA,u_1,u_2)
        &= \theta(JA) \omega(u_1, u_2) 
        - \theta(u_1)\omega(JA,u_2) + \theta(u_2) \omega(JA,u_1) \\
        &= \met{A}{JA} \omega(u_1,u_2) 
        + \theta(u_1)\theta(u_2) - \theta(u_2) \theta(u_1) \\
        &=0, 
    \end{aligned}
    \]
    which completes the proof. 
\end{proof}

By the above lemma, 
we can decompose the space $\fk$ orthogonally 
into the real eigenspaces of $\Psi$ as follows:  
\[
\fk = \bigoplus_{\lambda \in \R} \fk_{\lambda}. 
\]
We also decompose the space $\fu$ orthogonally as follows:  
\[
\fu = \bigoplus_{\lambda \in \R} \fu_{\lambda}, 
\]
where $\fu_{\lambda} \coloneqq J\fk_{\lambda}$. 
Since $\Psi$ commutes with $d\phi'(u)$ for all $u \in \fu$, 
each space $\fk_{\lambda}$ is an ideal of $\fg'$. 

\begin{lemma}\label{Lem:Psi has only one eigenvalue}
    The operator $\Psi$ has only one eigenvalue. 
\end{lemma}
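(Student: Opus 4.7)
I would argue by contradiction. Suppose $\Psi$ admits two distinct eigenvalues $\lambda_1 \ne \lambda_2$; choose nonzero $v_i \in \fk_{\lambda_i}$ and set $u_i \coloneqq Jv_i \in \fu_{\lambda_i}$. Since $\Psi$ is self-adjoint (Lemma \ref{Lem:A is self-adjoint}), the eigenspaces $\fk_\lambda$ are mutually orthogonal, and by $J$-invariance of $\met{\cdot}{\cdot}$ so are the $\fu_\lambda$. The strategy is to combine these orthogonalities with the integrability of $J$ and the identity $d\omega = \theta \w \omega$ to force $\theta|_\fu = 0$, which contradicts the non-Vaisman hypothesis: $\theta \ne 0$ and $\theta|_\fn = 0$ by Corollary \ref{Cor:classification when theta neq 0}, hence $\theta|_\fu \ne 0$.

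First I would extract a commutation restriction from the Nijenhuis tensor $N_J(v_1, v_2) = 0$. Since $\fu$ and $\fk$ are both abelian (the latter as a subspace of $\fn$, the former by Lemma \ref{Lem:Jk is abelian}), and each $\fk_\lambda$ is an ideal of $\fg'$, the four-term expansion of $N_J(v_1, v_2)$ collapses to $J(d\phi'(u_1) v_2 - d\phi'(u_2) v_1) = 0$. The first summand lies in $\fk_{\lambda_2}$ and the second in $\fk_{\lambda_1}$, which are orthogonal, so both vanish. In particular, when $\lambda_1 \ne \lambda_2$, the endomorphism $d\phi'(u_1)$ annihilates $\fk_{\lambda_2}$.

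Next I would evaluate both sides of $d\omega = \theta \w \omega$ on the mixed triple $(u_1, u_2, v_1)$. On the left, $[u_1, u_2] = 0$, $[u_2, v_1] = d\phi'(u_2)v_1 = 0$ by the previous step, and $\omega([v_1, u_1], u_2) = -\met{J\,d\phi'(u_1)v_1}{u_2}$ vanishes because $J\,d\phi'(u_1)v_1 \in \fu_{\lambda_1}$ is orthogonal to $u_2 \in \fu_{\lambda_2}$; therefore $d\omega(u_1, u_2, v_1) = 0$. On the right, $\theta(v_1) = 0$ and $\omega(u_2, v_1) = \met{Ju_2}{v_1} = -\met{v_2}{v_1} = 0$, so only $-\theta(u_2)\omega(u_1, v_1) = \theta(u_2)\abs{v_1}^2$ survives. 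Equating the two expressions and using $v_1 \ne 0$ forces $\theta(u_2) = 0$.

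Since $u_2$ ranges over all of $\fu_{\lambda_2}$, this shows $\theta|_{\fu_{\lambda_2}} = 0$; by symmetry $\theta|_{\fu_{\lambda_1}} = 0$, and applying the argument to every pair of distinct eigenvalues yields $\theta|_\fu = 0$, the desired contradiction. The only delicate point is identifying a triple in $d\omega = \theta \w \omega$ that isolates a surviving term: the mixed triple $(u_1, u_2, v_1)$ works precisely because it straddles two distinct eigenspaces, so that the orthogonality established in the first step and the vanishing obtained in the second step leave exactly $\theta(u_2)\abs{v_1}^2$ on the right-hand side.
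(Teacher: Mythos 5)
Your proof is correct and takes essentially the same route as the paper's, just read contrapositively: the paper fixes $u_0$ with $\theta(u_0)\neq 0$ and evaluates $d\omega=\theta\w\omega$ on a triple straddling two distinct $\Psi$-eigenspaces to force the other eigenspace to vanish, while you assume two nonzero eigenspaces and the same computation forces $\theta|_{\fu}=0$. Both arguments rest on the identical ingredients: orthogonality of the $\Psi$-eigenspaces (and hence of the $\fu_\lambda$), the bracket swap coming from integrability together with the abelian relations on $\fu$ and $\fk$, and the LCK identity on a mixed triple.
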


\begin{proof}
    Since $\theta|_{\fu} \neq 0$, 
    We can take $\lambda_0 \in \R$ and $u \in \fu_{\lambda_0}$ so that 
    $\theta|_{\fu_{\lambda_0}}\neq 0$ and $\theta(u_0) \neq 0$. 
    For all $\lambda_1 \neq \lambda_0$ and $u_1 \in \fu_{\lambda_1}$, 
    since $\fk_{\lambda_0}$ and $\fk_{\lambda_1}$ 
    are ideals and orthogonal, 
    we have
    \[
    \begin{aligned}
        d\omega(u_0,u_1,Ju_1)
        &= \omega([u_0,Ju_1],u_1) + \omega(Ju_0,[u_1,Ju_1]) \\
        &= \met{J[u_1,Ju_0]}{u_1} + \met{Ju_0}{[u_1,Ju_1]} \\
        &= 0, \\
        \theta \w \omega(u_0,u_1,Ju_1) 
        &= \theta(u_0)\omega(u_1,Ju_1) - \theta(u_1)\met{u_0}{u_1} \\
        &= \theta(u_0) \|u_1\|^2. 
    \end{aligned}
    \]
    Therefore, 
    we have $u_1=0$.  
    So it follows that 
    $\fu_{\lambda_1}=\{ 0 \}$ for all $\lambda_1 \neq \lambda_0$, 
    which completes the proof. 
\end{proof}

From the above lemma, 
the operator $\Psi$ acts on $\fk$ 
as scalar multiplication by 
$\lambda_0$. 
Since $\tr(\Psi)=\tr(\ad(A)|_{\fk}) = t \theta(A)$, 
we have 
\begin{equation}\label{Equ:lambda_0}
\lambda_0 = \frac{t}{s}\|A\|^2.  
\end{equation}

\begin{lemma}\label{Lem:A is in u}
    $\fu \perp \fk$, 
    in particular $A \in \fu$. 
\end{lemma}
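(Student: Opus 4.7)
The plan is to extract the orthogonality from the LCK identity $d\omega = \theta \wedge \omega$ applied to triples of the form $(u_0, v_1, v_2)$ with $u_0 \in \fu$ and $v_1, v_2 \in \fk$, and then specialize $u_0 = A$ so that the action of $d\phi'(u_0)$ on $\fk$ collapses to a scalar.

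Step 1: Simplify both sides. Since $\fk \subset \fn$ is abelian, $[v_1, v_2] = 0$, and since $\fk$ is an ideal (Lemma \ref{Lem:k is ideal}), $[u_0, v_i] = d\phi'(u_0) v_i \in \fk$. Using the Chevalley-Eilenberg expansion one obtains
\[
d\omega(u_0, v_1, v_2) = -\omega(d\phi'(u_0) v_1, v_2) - \omega(v_1, d\phi'(u_0) v_2).
\]
On the other hand, since $\theta|_{\fk} = 0$, one has $(\theta \wedge \omega)(u_0, v_1, v_2) = \theta(u_0)\, \omega(v_1, v_2)$. Equating the two yields
\[
-\omega(d\phi'(u_0) v_1, v_2) - \omega(v_1, d\phi'(u_0) v_2) = \theta(u_0)\, \omega(v_1, v_2).
\]

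Step 2: Specialize $u_0 = A$. By Lemma \ref{Lem:Psi has only one eigenvalue} together with \eqref{Equ:lambda_0}, the operator $d\phi'(A)|_{\fk} = \Psi$ acts as scalar multiplication by $\lambda_0 = (t/s)\|A\|^2$. Since $\theta(A) = \|A\|^2$, the identity above becomes
\[
-2\lambda_0\, \omega(v_1, v_2) = \|A\|^2\, \omega(v_1, v_2),
\qquad \text{i.e.} \qquad \left(\tfrac{2t}{s} + 1\right)\|A\|^2\, \omega(v_1, v_2) = 0
\]
for all $v_1, v_2 \in \fk$. Because $\theta \neq 0$ in the non-Vaisman setting, $\|A\|^2 > 0$, and the leading coefficient is strictly positive, so $\omega|_{\fk \times \fk} \equiv 0$. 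Unravelling via $\omega(v_1, v_2) = \met{Jv_1}{v_2}$ and $J\fk = \fu$, this precisely says $\met{u}{v} = 0$ for every $u \in \fu$ and $v \in \fk$, so $\fu \perp \fk$.

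Step 3: Conclude $A \in \fu$. Since $\theta|_{\fa} = 0$, the Lee vector already lies in the orthogonal complement $\fu \oplus \fk$ of $\fa$; write $A = A_\fu + A_\fk$ in this direct sum. For any $v \in \fk$, one has $\met{A}{v} = \theta(v) = 0$, while Step 2 gives $\met{A_\fu}{v} = 0$; hence $\met{A_\fk}{v} = 0$ for all $v \in \fk$, and choosing $v = A_\fk$ forces $A_\fk = 0$. There is no real obstacle in this argument; the only nontrivial input is recognizing that the correct triple to feed into the LCK identity is $(A, v_1, v_2)$, which turns the seemingly unrelated pairing $\met{\fu}{\fk}$ into a scalar-valued equation whose coefficient $(2t/s+1)\|A\|^2$ is manifestly nonzero.
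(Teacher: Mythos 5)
Your argument is correct and follows essentially the same route as the paper: evaluate $d\omega=\theta\w\omega$ on $(A,v_1,v_2)$ with $v_1,v_2\in\fk$, use that $\ad_A$ acts on $\fk$ as the scalar $\lambda_0=\frac{t}{s}\|A\|^2$, and kill $\omega|_{\fk\times\fk}$ via the positive coefficient $\bigl(1+\frac{2t}{s}\bigr)\|A\|^2$, then deduce $A\in\fu$ from $\theta|_{\fk}=0$. The only nitpick is writing $d\phi'(A)$ before $A\in\fu$ is known; since $A\in\fu\oplus\fk$ and the $\fk$-component commutes with $\fn$, the relevant operator is just $\Psi=\ad_A|_{\fk}$ (equivalently apply your Step 1 identity to the $\fu$-component of $A$, which has the same $\theta$-value), so the argument goes through exactly as in the paper.
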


\begin{proof}
    For all $u_1,u_2 \in \fu$, 
    we have 
    \[
    \begin{aligned}
        d\omega(A,Ju_1,Ju_2)
        &= -\lambda_0 \omega(Ju_1,Ju_2) -\lambda_0 \omega(Ju_1,Ju_2), \\
        \theta \w \omega(A,Ju_1,Ju_2) 
        &= \|A\|^2 \omega(Ju_1,Ju_2). 
    \end{aligned}
    \]
    Hence we obtain 
    \[
    (\|A\|^2+2\lambda_0)\omega(Ju_1,Ju_2)=0. 
    \]
    However, we have 
    \[
    (\|A\|^2+2\lambda_0)= \|A\|^2(1+\frac{2t}{s})>0, 
    \]
    which implies $\omega(Ju_1,Ju_2)=\met{Ju_1}{u_2}=0$, 
    and thus $\fu \perp \fk$ follows. 
    Since $\theta|_{\fk}=0$, 
    we have $A \in \fk^{\perp}=\fu$. 
\end{proof}

\begin{lemma}
    $t>0$, 
    or equivalently $\lambda_0>0$. 
\end{lemma}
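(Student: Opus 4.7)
The plan is to argue by contradiction, reducing the case $t = 0$ to a classification result that forces the structure to be Vaisman. Suppose for contradiction that $t = 0$, so that $\fa = \fn \cap J\fn = \{0\}$. Then $\fn = \fk$, and the decomposition $\fg = \fu \ltimes_{d\phi}(\fk \oplus \fa)$ collapses to $\fg = \fu \ltimes_{d\phi'} \fk$, which is exactly the sub-Lie algebra $\fg'$ constructed above. In particular, $(\fg, J, \met{\cdot}{\cdot}) = (\fg', J, \met{\cdot}{\cdot})$ as LCK Lie algebras.

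Among the enumerated properties of $\fg'$, the key one for us is that $J$ is abelian on $\fg'$, and hence on $\fg$. Since $\fg$ is unimodular by standing assumption, we may invoke the classification theorem from \cite{AO15} stated earlier in this section: any unimodular LCK Lie algebra with abelian complex structure is isomorphic, up to scalar multiple, to $(\R \times \mathfrak{h}_{2d+1}, J_0, \met{\cdot}{\cdot}_0)$, and in particular is Vaisman. This contradicts the standing assumption that $(\fg, J, \met{\cdot}{\cdot})$ is non-Vaisman. Therefore $t > 0$.

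Finally, the equivalence with $\lambda_0 > 0$ is immediate from formula~(\ref{Equ:lambda_0}): we have $\lambda_0 = \frac{t}{s}\|A\|^2$, and the Lee vector $A$ is non-zero (since $\theta \neq 0$ by the non-Vaisman assumption via Corollary~\ref{Cor:classification when theta neq 0}), so $\|A\|^2 > 0$ and $s > 0$ (otherwise $\fm = 0$ and $\theta|_{\fm} = 0$, contradicting $\theta \neq 0$ and $\theta|_{\fn} = 0$). There is no real obstacle here; the argument simply packages the previously accumulated structural information so that the abelian-$J$ classification applies once $\fa$ vanishes.
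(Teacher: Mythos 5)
Your proof is correct, and it takes a more direct route than the paper's. The paper also argues by contradiction from $t=0$, but its mechanism is different: it uses $\lambda_0=\frac{t}{s}\|A\|^2=0$ together with Lemma \ref{Lem:A is in u} to conclude that the Lee vector $A$ is central, then re-splits the meta-abelian algebra as $\fg=\R^{s-1}\ltimes(\R A\oplus\R^s)$ so that the Lee form is nonzero on the abelian normal factor, and finally invokes Corollary \ref{Cor:classification when theta neq 0} to force the Vaisman property. You instead observe that $t=0$ gives $\fa=0$, hence $\fg=\fg'$, and directly use the already-established fact that $J$ is abelian on $\fg'$ (which relies only on Lemma \ref{Lem:Jk is abelian} and integrability, neither of which needs $t>0$), so the unimodular abelian-$J$ classification of \cite{AO15} applies at once. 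The two arguments converge on the same classification result (the paper's Corollary is itself proved via the abelian-$J$ case), but your version bypasses the centrality of $A$ and the re-decomposition entirely, and in particular does not need Lemma \ref{Lem:A is in u} or equation (\ref{Equ:lambda_0}) for the main claim $t>0$; it only needs them, as you note, for the stated equivalence with $\lambda_0>0$. The paper's detour has the mild advantage of reusing its packaged Corollary \ref{Cor:classification when theta neq 0} verbatim, while yours makes the logical dependence shorter and arguably more transparent.
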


\begin{proof}
    If $t=0$, 
    we have $\fg=\fg'$. 
    From the equation (\ref{Equ:lambda_0}) and Lemma \ref{Lem:A is in u}, 
    $A$ is in the center of $\fg$. 
    So we can express the meta-abelian Lie algebra 
    $\fg=\R^s \ltimes \R^s$ as 
    $\fg=\R^{s-1} \ltimes (\R A \oplus \R^s)$. 
    In this case, 
    we have $\theta|_{(\R A \oplus \R^s)} \neq 0$. 
    By Corollary \ref{Cor:classification when theta neq 0}, 
    it follows that 
    $(\fg,J,\met{\cdot}{\cdot})$ is Vaisman, 
    which is not the case. 
\end{proof}

Next we consider the \emph{Killing form} of $\fg'$. 
The Killing form vanishes on $\fk$, 
but as shown in the following lemma, 
it is positive definite on $\fu$. 

\begin{lemma}\label{Lem:Killing is positive}
    The following symmetric bilinear operator 
    \[
    \begin{array}{c}
    \kappa_{\fu} : \fu \times \fu {\longrightarrow} \R\\
    (u_1,u_2) \mapsto 
    \tr(d\phi'(u_1)d\phi'(u_2)) 
    \end{array}
    \]
    is positive definite. 
\end{lemma}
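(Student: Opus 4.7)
The plan is to exploit the orthogonal decomposition $\fu = \R A \oplus A^\perp$ with respect to $\met{\cdot}{\cdot}|_{\fu}$ and show that $\kappa_{\fu}$ is block diagonal with positive blocks. For the $\R A$-block this is immediate: since $d\phi'(A) = \lambda_0 \id_{\fk}$ with $\lambda_0 > 0$ and $\tr(d\phi'(v)) = t\theta(v) = t\met{A}{v}$ (established just above the lemma), one computes
\[
\kappa_{\fu}(A, v) = \lambda_0 \tr(d\phi'(v)) = \lambda_0 t \met{A}{v}
\]
for all $v \in \fu$, yielding $\kappa_{\fu}(A, A) = s\lambda_0^2 > 0$ together with $\kappa_{\fu}(A, v) = 0$ for $v \in A^\perp$. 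It therefore suffices to show $\kappa_{\fu}(v, v) > 0$ for nonzero $v \in A^\perp$.

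To this end, I extract a structural decomposition of the operator $T(v) := d\phi'(v)$. Choose an orthonormal basis $\{\tilde e_i\}_{i=1}^s$ of $\fu$, set $e_i := J\tilde e_i$, and define $c_{ijk} := \met{d\phi'(\tilde e_i) e_k}{e_j}$ and $\theta_i := \theta(\tilde e_i)$. Two identities are available: the integrability of $J$ on $\fg'$ in the form $d\phi'(u)Ju' = d\phi'(u')Ju$ gives $c_{ijk} = c_{kji}$, and applying $d\omega = \theta \w \omega$ to the triple $(\tilde e_i, \tilde e_j, e_k)$ yields $c_{jik} - c_{ijk} = \theta_i \delta_{jk} - \theta_j \delta_{ik}$. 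A short compatibility check across the three index transpositions shows that $c$ admits the unique decomposition
\[
c_{ijk} = s_{ijk} + \theta_j \delta_{ik}
\]
with $s_{ijk}$ a totally symmetric $3$-tensor. Writing $\vec v = (v^i)$, $\vec\theta = (\theta_i) \in \R^s$ and $M(v)_{ij} := \sum_k v^k s_{ijk}$, this is equivalent to the matrix identity $T(v) = M(v) + \vec\theta\, \vec v^{\top}$, with $M(v)$ symmetric.

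Substituting the decomposition into $\sum_k \theta_k T_k = \lambda_0 \id_{\fk}$ yields $\sum_k \theta_k s_{ijk} = \lambda_0\delta_{ij} - \theta_i\theta_j$, and contracting with $\vec v$ gives $M(v)\vec\theta = \lambda_0 \vec v - \theta(v)\vec\theta$. Expanding $T(v)^2$ via the decomposition and using cyclicity of the trace together with this identity then produces the key formula
\[
\kappa_{\fu}(v, v) = \tr(T(v)^2) = \|M(v)\|_F^2 + 2\lambda_0\|v\|^2 - \theta(v)^2.
\]
For $v \in A^\perp$, $\theta(v) = \met{A}{v} = 0$, so $\kappa_{\fu}(v, v) \geq 2\lambda_0 \|v\|^2$, which is strictly positive for $v \neq 0$. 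Combined with the first step, this proves $\kappa_{\fu}$ is positive definite.

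The main obstacle is the derivation of the normal form $c_{ijk} = s_{ijk} + \theta_j \delta_{ik}$: the two available symmetries must be shown to force precisely this correction term (any other linear combination of rank-one $\theta \otimes \delta$ tensors would violate either integrability or the LCK relation), and the compatibility across all three transpositions must be verified carefully. Once this structural fact is in place, the identity $M(v)\vec\theta = \lambda_0 \vec v - \theta(v)\vec\theta$ and the trace formula follow by routine manipulation, and the positivity on $A^\perp$ reduces to the triviality $\|M(v)\|_F^2 + 2\lambda_0\|v\|^2 > 0$.
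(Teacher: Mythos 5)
Your proof is correct, and it takes a genuinely different route from the paper's. I checked the steps you flag as the main obstacle: the two identities $c_{ijk}=c_{kji}$ (from $[u,Ju']=[u',Ju]$, which holds since $\fu$ and $\fk$ are abelian and $J$ is integrable) and $c_{jik}-c_{ijk}=\theta_i\delta_{jk}-\theta_j\delta_{ik}$ (from evaluating $d\omega=\theta\w\omega$ on $(\tilde e_i,\tilde e_j,e_k)$) do force $s_{ijk}\coloneqq c_{ijk}-\theta_j\delta_{ik}$ to be totally symmetric, and with $d\phi'(A)=\lambda_0\id_{\fk}$, $A=\sum_i\theta_i\tilde e_i$ (using $A\in\fu$), the identities $M(v)\vec\theta=\lambda_0\vec v-\theta(v)\vec\theta$ and $\tr(T(v)^2)=\lVert M(v)\rVert_F^2+2\lambda_0\lVert v\rVert^2-\theta(v)^2$ follow as you state; combined with $\kappa_{\fu}(A,\cdot)=\lambda_0 t\,\theta(\cdot)$ on $\fu$ and $\lambda_0>0$, this gives positive definiteness. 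The paper argues differently and more briefly: from abelianness of $J$ on $\fg'$ it derives the operator identity $d\phi'(u_1)d\phi'(u_2)=d\phi'(-J[u_1,Ju_2])$, so that $\kappa_{\fu}(u_1,u_2)=t\,\theta(-J[u_1,Ju_2])$, and then a single evaluation of $d\omega=\theta\w\omega$ on the triple $(A,u,Ju)$ gives $\theta(-J[u,Ju])=\lambda_0\lVert u\rVert^2+\lVert A\rVert^2\lVert u\rVert^2-\theta(u)^2>0$ by Cauchy--Schwarz. The paper's proof is basis-free and avoids both the normal form for the structure constants and the splitting $\fu=\R A\oplus A^{\perp}$; yours is more computational but yields an explicit closed formula for $\kappa_{\fu}(v,v)$ and the structural fact that the structure tensor of $\fg'$ is a totally symmetric tensor plus the rank-one correction $\theta_j\delta_{ik}$, which is of some independent interest. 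Both arguments consume the same prior lemmas ($\theta|_{\fn}=0$, $\Psi=\lambda_0\id$ with $\lambda_0>0$, $A\in\fu$, $\tr d\phi'=t\theta$).
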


\begin{proof}
    For all $u, u_1, u_2\in \fu$, 
    we have 
    \[
    \begin{aligned}
    d\phi'(u_1)d\phi'(u_2)(Ju) 
    &= [u_1,[u_2,Ju]] \\
    &= [u_1,[u,Ju_2]] \\
    &= [u,[u_1,Ju_2]] \\
    &= [-J[u_1,Ju_2],Ju] \\
    &= d\phi'(-J[u_1,Ju_2])(Ju). 
    \end{aligned}
    \]
    Thus we obtain $d\phi'(u_1)d\phi'(u_2)=d\phi'(-J[u_1,Ju_2])$. 
    By taking the trace, 
    we have
    \begin{equation}\label{Equ:Killing and theta}
    \kappa_{\fu}(u_1,u_2) = \tr(d\phi'(-J[u_1,Ju_2])) = t \theta(-J[u_1,Ju_2]). 
    \end{equation}

    Therefore it is enough to show that 
    \[
    \theta(-J[u,Ju])>0
    \]
    for all $0 \neq u \in \fu$. 
    From the equations 
    \[
    \begin{aligned}
        d\omega(A,u,Ju) 
        &= \omega([A,Ju],u) + \omega(A,[u,Ju]) \\
        &= - \lambda_0 \|u\|^2 + \theta(-J[u,Ju]), \\
        \theta \w \omega(A,u,Ju) 
        &= \theta(A)\omega(u,Ju) - \theta(u)\omega(A,Ju) \\
        &= \|A\|^2 \|u\|^2 - \theta(u)^2, 
    \end{aligned}
    \]
    we have 
    \[
    \theta(-J[u,Ju]) = \lambda_0 \|u\|^2 + 
    (\|A\|^2 \|u\|^2 - \theta(u)^2). 
    \]
    By the Cauchy-Schwarz inequality, 
    we have 
    \[
    \theta(u)^2 = \met{A}{u}^2 \le \|A\|^2 \|u\|^2, 
    \]
    which implies that $\theta(-J[u,Ju])>0$ since $\lambda_0>0$. 
\end{proof}

From the above lemma, 
$\kappa_{\fu}$ defines an inner product on $\fu$. 
We define an inner product $\kappa_{\fk}$ on $\fk$ through 
the isomorphism $J \colon \fu \stackrel{\sim}{\to} \fk$ as follows: 
\[
\kappa_{\fk}(v_1,v_2) \coloneqq \kappa_{\fu}(Jv_1,Jv_2), 
\]
where $v_1,v_2 \in \fk$. 
The following lemma follows essentially because of 
the skew-symmetricity of adjoint operators with respect to the Killing form. 

\begin{lemma}\label{Lem:kappa is self-adjoint}
    For all $u \in \fu$, 
    $d\phi'(u)$ is self-adjoint 
    with respect to the inner product $\kappa_{\fk}$. 
\end{lemma}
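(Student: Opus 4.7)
The plan is to reduce the claim to the commutativity of the operator family $\{d\phi'(u)\}_{u\in\fu}$ acting on $\fk$, combined with the multiplicative identity $d\phi'(u_1)\, d\phi'(u_2) = d\phi'(-J[u_1, Ju_2])$ that was already established inside the proof of Lemma~\ref{Lem:Killing is positive}. The commutativity is immediate: since $\fk$ is an ideal of $\fg'$ by Lemma~\ref{Lem:k is ideal}, the restriction $d\phi'|_{\fu}\colon \fu \to \End(\fk)$ is a Lie algebra representation, and $\fu$ is abelian by Lemma~\ref{Lem:Jk is abelian}, so $[d\phi'(u_1), d\phi'(u_2)] = d\phi'([u_1,u_2]) = 0$.

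I would introduce the auxiliary product $u_1 * u_2 := -J[u_1, Ju_2]$ on $\fu$. It is well-defined as a map $\fu \times \fu \to \fu$ because $[u_1, Ju_2] \in \fk$ (as $\fk$ is an ideal and $Ju_2 \in \fk$) and $J\fk = \fu$. Crucially, $*$ is commutative: the integrability relation $[v_1, Jv_2] + [Jv_1, v_2] = 0$ (for $v_1, v_2 \in \fk$) that appears in the proof of Lemma~\ref{Lem:Jk is abelian} rewrites, upon substituting $v_i = -J(Ju_i)$ for arbitrary $u_i \in \fu$, into $[u_1, Ju_2] = [u_2, Ju_1]$, hence $u_1 * u_2 = u_2 * u_1$. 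The second identity I will need is $J\,d\phi'(u)\,v = u * Jv$ for $u \in \fu$, $v \in \fk$, which is a one-line consequence of $v = -J(Jv)$ and the definition of $*$: indeed, $J[u,v] = -J[u, J(Jv)] = u * (Jv)$.

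With these ingredients in place, the final computation is short. For $u \in \fu$ and $v_1, v_2 \in \fk$,
\begin{align*}
\kappa_{\fk}(d\phi'(u)\,v_1,\, v_2)
&= \kappa_{\fu}\bigl(J\,d\phi'(u)\,v_1,\; Jv_2\bigr)
= \kappa_{\fu}\bigl(u * Jv_1,\; Jv_2\bigr) \\
&= \tr\bigl(d\phi'(u * Jv_1)\, d\phi'(Jv_2)\bigr)
= \tr\bigl(d\phi'(u)\, d\phi'(Jv_1)\, d\phi'(Jv_2)\bigr),
\end{align*}
and symmetrically $\kappa_{\fk}(v_1, d\phi'(u)\,v_2) = \tr\bigl(d\phi'(Jv_1)\, d\phi'(u)\, d\phi'(Jv_2)\bigr)$. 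The two traces agree by the commutativity of the $d\phi'$'s established in the first paragraph. Rather than any deep obstacle, the only point requiring care is the bookkeeping of which elements sit in $\fu$ and which in $\fk$ so that $d\phi'$ and $*$ remain meaningful throughout; no new LCK-theoretic calculation beyond what is already assembled earlier in this section is required.
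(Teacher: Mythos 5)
Your proof is correct and follows essentially the same route as the paper: there, too, self-adjointness is reduced to writing $\kappa_{\fk}(d\phi'(u_1)Ju_2,Ju_3)$ as the trace of a product of three commuting operators, $\tr(\ad_{u_1}\ad_{u_2}\ad_{u_3})$, via the abelianness of $J$ — which is exactly the content of the multiplicative identity $d\phi'(u_1)d\phi'(u_2)=d\phi'(-J[u_1,Ju_2])$ you recycle from Lemma~\ref{Lem:Killing is positive}. The only slip is in your commutativity-of-$*$ step: the substitution should be $v_i=-Ju_i\in\fk$ rather than $v_i=-J(Ju_i)=u_i$ (which lies in $\fu$, not $\fk$); with that correction the identity $[u_1,Ju_2]=[u_2,Ju_1]$ indeed follows, and in any case that identity is not needed for your final trace computation.
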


\begin{proof}
    For all $u_1,u_2,u_3 \in \fu$ we have 
    \[
    \kappa_{\fk}(d\phi'(u_1)Ju_2, Ju_3) = \kappa(-J[u_1,Ju_2],u_3)
    = \tr(\ad_{(-J[u_1,Ju_2])} \ad_{u_3}). 
    \]
    Since $J$ is abelian, 
    we have $\ad_{u}J = -\ad_{Ju}$ for all $u \in \fu$. 
    Therefore, 
    we obtain 
    \[
    \begin{aligned}
        \ad_{(-J[u_1,Ju_2])} \ad_{u_3} 
        &= \ad_{[u_1,Ju_2]} J \ad_{u_3} \\
        &= (\ad_{u_1} \ad_{Ju_2} - \ad_{Ju_2} \ad_{u_1}) J \ad_{u_3} \\
        &= \ad_{u_1} \ad_{Ju_2} J \ad_{u_3} \\
        &= \ad_{u_1} \ad_{u_2} \ad_{u_3}. 
    \end{aligned}
    \]
    This is symmetric with respect to $u_1$,$u_2$ and $u_3$, 
    which completes the proof. 
\end{proof}

Since the operators $\{d\phi'(u)\}_{u \in \fu}$ are commutative, 
we can take an orthonormal basis $v_1, \ldots ,v_s \in \fk$ with 
respect to $\kappa_{\fk}$ so that $\{d\phi'(u)\}_{u \in \fu}$ 
act on $\fk$ diagonally. 
We also take a basis of $u_1, \ldots , u_s \in \fu$ defined by 
$Ju_i = v_i$ for all $1 \le i \le s$. 
This basis is orthonormal with respect to $\kappa_{\fu}$. 
By multiplying each vector in the basis by $\pm 1$, 
we may assume that $\theta(u_i)\ge0$. 

We define real numbers $a_{ij}$ so that the commutator satisfies 
$[u_i,v_j]=a_{ij}v_j$ for all $1 \le i,j \le s$. 
From the equation (\ref{Equ:Killing and theta}), 
for all $1 \le i,j \le s$ we have 
\[
\delta_{ij} = \kappa_{\fu}(u_i,u_j) = t\theta(-J[u_i,v_j]) 
= t a_{ij} \theta (u_j). 
\]
Thus we obtain $1=t a_{ii} \theta(u_i)$, 
in particular $\theta(u_i) \neq 0$ for all $1 \le i \le s$. 
Hence if $i \neq j$, 
we have $a_{ij}=0$. 
Moreover, 
we have 
\[
t\theta(u_i) = \tr(d\phi'(u_i)) = a_{i1}+\cdots +a_{is} = a_{ii}, 
\]
which implies $a_{ii}^2=1$. 
By the assumption $\theta(u_i) \ge 0$,
we obtain 
\begin{equation}\label{Equ:structure of dphi on k}
    [u_i,v_j]=\delta_{ij} v_j, \quad \theta(u_i)=\frac{1}{t}. 
\end{equation}

Now we can proof the theorem. 

\begin{proof}[Proof of Theorem 
\ref{Thm:classification of LCK meta-abelian}]

\
Fix the basis $u_1,\ldots,u_s,v_1,\ldots,v_s,x_1,y_1,\ldots,x_t,y_t \in \fg$ 
constructed in the argument so far. 
From the equations (\ref{Equ:diagonalization of dphi on a}) and 
(\ref{Equ:structure of dphi on k}), 
we have
\[
    d\phi(t_1, \ldots ,t_s) = 
    \mathrm{diag} (t_1, \ldots ,t_s,(\sum_{j=1}^s c_{ij} t_j)_{i=1}^t)
\]
where $C=(c_{ij})_{ij}\in\Mat_{t\times s}(\C)$ is a matrix 
satisfying $\Re c_{ij} = -\frac{1}{2t}$ for all 
$1\le i \le t$ and $1 \le j \le s$. 
We also have 
\[
\theta=\frac{1}{t} (u^1 + \cdots + u^s). 
\]
Therefore it is shown that $\fg$ is an LCK OT-like Lie algebra of type $(s,t)$ 
where $m=s, n=s+2t$. 

We next show that the inner product $\met{\cdot}{\cdot}$ 
is isomorphic to $\met{\cdot}{\cdot}_C$  
up to scalar multiple. 
We already know that the basis $x_1,y_1,\ldots,x_t,y_t$ of $\fa$ is 
orthonormal and $\fu \oplus \fk \oplus \fa$ 
is the orthogonal decomposition. 

We define real numbers $b_{ij}$ so that 
$\met{u_i}{u_j}=b_{ij}$ for all $1 \le i,j \le s$. 
By using these constants, 
for all $1 \le i,j,k \le s$ we have 
\[
\begin{aligned}
    d\omega(u_i,u_j,v_k) &= -\delta_{ik}b_{kj} + \delta_{jk} b_{ki}, \\
    \theta \w \omega(u_i,u_j,v_k) &= \frac{1}{t}(b_{jk}-b_{ik}). 
\end{aligned}
\]
Therefore we obtain 
\[
(\frac{1}{t}+\delta_{ik})^{-1} b_{ik} 
= (\frac{1}{t}+\delta_{jk})^{-1} b_{jk}. 
\]
From this equation, 
there is a constant $\gamma \in \R$ such that 
\[
(\frac{1}{t}+\delta_{ij})^{-1} b_{ij} = \gamma
\]
holds for all $1 \le i,j \le s$. 
By replacing the inner product $\met{\cdot}{\cdot}$ 
with its constant multiple, 
we may assume that $\gamma=t$. 
Thus, we have 
\begin{equation}\label{Equ:b_ij}
b_{ij} = 1+t\delta_{ij}
\end{equation}
for all $1 \le i,j \le s$. 
Note that we also show that $\met{v_i}{v_j}=1+t\delta_{ij}$. 

From the orthogonality of the spaces $\fu$, $\fk$ and $\fa$, 
along with the equation (\ref{Equ:b_ij}), 
we can express the fundamental form $\omega$ as follows: 
\[
    \omega= t\sum_{i=1}^s u^i \w v^i + \sum_{i,j=1}^s u^i \w v^j 
    + \sum_{i=1}^t x^i \w y^i, 
\]
which completes the proof. 
\end{proof}

\appendix

\section{Proof of Theorem \ref{Thm:type of OT with LCK is (s,1)}}
\label{Appendix}

The new ideas presented in this appendix are 
primarily due to Junnosuke Koizumi. 

In this appendix, 
we prove Theorem \ref{Thm:type of OT with LCK is (s,1)} assuming Proposition \ref{Prop:LCK then absolute values of sigma unit is equal}. 
As explained in subsection \ref{subsection:LCK metrics on OT manifolds}, 
this has already been shown in \cite{DV23}, 
but a concise proof can be given by slightly modifying the argument 
in \cite{DV23}. 
At the end of the appendix, 
we also discuss on the existence of pluriclosed metrics on OT manifolds 
since the similar argument can be applied. 

\begin{theorem}\label{Thm:LCK OT is of type (s,1)}
    Let $K$ be a number field with real embeddings $\sigma_1,\dots,\sigma_s$ 
    and complex embeddings $\sigma_{s+1},\dots,\sigma_{s+2t}$, 
    where $\sigma_{s+i}=\overline{\sigma_{s+t+i}}$. 
    Let $U$ be a subgroup of $\unit{K}$ such that for any $u\in U$, we have
    \[
        \abs{\sigma_{s+1}(u)}=\cdots=\abs{\sigma_{s+t}(u)}.
    \]
    If $s\geq 1$ and $t\geq 2$, then $\rank U\leq s-1$.
\end{theorem}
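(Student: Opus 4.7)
The plan is to use Dirichlet's unit theorem to translate the hypothesis into a geometric statement about the logarithmic lattice, and then improve the trivial bound $\rank U\leq s$ by one via a rationality obstruction coming from transcendence theory.

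First, let $\lambda\colon \unit{K}/\mathrm{tors}\hookrightarrow \R^{s+t}$ be the standard log embedding (with coordinates $\log|\sigma_i(u)|$ for real places and $\log|\sigma_{s+j}(u)|^2$ for complex places), whose image, by Dirichlet, is a rank-$(s+t-1)$ lattice in the sum-zero hyperplane $H$. The hypothesis on $U$ forces $\lambda(U)\subset W$, where $W=H\cap\{y_1=\cdots=y_t\}$ is an $s$-dimensional subspace (the $y_j$ denoting the complex-place coordinates), which gives $\rank U\leq s$ immediately. I would then pass to the difference map
\[
\phi\colon \unit{K}\longrightarrow\R^{t-1},\qquad u\longmapsto \bigl(\log|\sigma_{s+1}(u)/\sigma_{s+j}(u)|\bigr)_{j=2}^{t},
\]
whose kernel $\unit{K}^0$ contains $U$. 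The $t-1$ component characters of $\phi$ are $\R$-linearly independent on $\unit{K}\otimes\R$ (no linear combination of them is proportional to the product-formula relation, crucially using $s\geq 1$ so the coefficient on $\log|\sigma_1|$ is nonzero), so $\phi$ extends to an $\R$-surjection and $\phi(\unit{K})$ $\R$-spans $\R^{t-1}$. By rank-nullity,
\[
\rank U\leq \rank\unit{K}^0=(s+t-1)-\rank\phi(\unit{K}),
\]
so the bound $\rank U\leq s-1$ becomes equivalent to $\rank\phi(\unit{K})\geq t$, i.e., $\phi(\unit{K})$ must fail to be a lattice in $\R^{t-1}$.

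The main obstacle is proving this non-discreteness. Suppose for contradiction that $\phi(\unit{K})$ is a discrete lattice of rank $t-1$; equivalently, $W$ is $\Q$-rational with respect to the $\Q$-structure $\lambda(\unit{K})\otimes\Q$ on $H$. Fixing fundamental units $u_1,\ldots,u_{s+t-1}$ and forming the $(s+t-1)\times(t-1)$ real matrix $B_{k,j}=\log|\sigma_{s+1}(u_k)/\sigma_{s+j+1}(u_k)|$, this rationality says the columns of $B$ span a $\Q$-rational subspace of $\R^{s+t-1}$. I would then invoke a transcendence-theoretic input — such as Baker's theorem on linear forms in logarithms of algebraic numbers — to argue that any unexpected $\Q$-linear relation among the entries of $B$ forces a genuine multiplicative dependence among the algebraic ratios $\sigma_{s+1}(u_k)/\sigma_{s+j+1}(u_k)$ inside the Galois closure $L$ of $K/\Q$. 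The final step is a Galois-theoretic analysis of such dependences: exploiting that $t\geq 2$ produces genuinely non-conjugate complex places to compare, and $s\geq 1$ provides a real embedding that anchors the absolute values, one shows no simultaneous system of such multiplicative identities can hold, yielding the required contradiction.
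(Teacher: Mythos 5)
Your setup (Dirichlet's unit theorem, the $s$-dimensional subspace cut out by the equal-modulus conditions, and the rank--nullity reduction to showing $\rank\phi(\unit{K})\geq t$, i.e.\ $\rank\unit{K}^0\leq s-1$ for the full kernel) is a correct reformulation, and up to that point it parallels the paper. But the proof has a genuine gap exactly where the content of the theorem lies: the final paragraph. First, Baker's theorem is not the relevant tool and is not needed for the translation you describe: a $\Q$-linear relation among the numbers $\log\abs{\sigma_i(u)}$ becomes, upon clearing denominators and exponentiating, a multiplicative relation among the algebraic numbers $\abs{\sigma_{s+j}(u)}^2=\sigma_{s+j}(u)\sigma_{s+t+j}(u)$ and $\sigma_i(u)$ --- no transcendence input enters. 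Second, and more seriously, the concluding claim that ``no simultaneous system of such multiplicative identities can hold'' is both unproven and cannot be true in the generality stated: subgroups $U$ of rank $s-1$ satisfying exactly these identities do exist (they are what the theorem permits, and what OT manifolds with LCK metrics are built from), so any argument must distinguish rank $s$ from rank $s-1$, and nothing in your Galois-theoretic sketch does that. As written, the ``rationality obstruction'' is an assertion, not an argument; also, a minor point, the negation of $\rank\phi(\unit{K})\geq t$ is merely $\rank\phi(\unit{K})\leq t-1$, which does not make $\phi(\unit{K})$ a \emph{discrete} lattice, though the equivalence with $\Q$-rationality of $W$ at the level of ranks is fine.

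The mechanism the paper uses, and which your sketch is missing, works element by element in $U$ rather than with fundamental units of $\unit{K}$. For $u\in U$ with $\deg(u)<[K:\Q]$, the real embedding $\sigma_1$ restricted to $\Q(u)$ has at least two extensions to $K$, giving $\sigma_1(u)=\sigma_i(u)$ for some $i\geq 2$. For $u$ of full degree, the hypothesis (using $t\geq 2$) gives the algebraic identity $\sigma_{s+1}(u)\sigma_{s+t+1}(u)=\sigma_{s+2}(u)\sigma_{s+t+2}(u)$, and transporting it by some $\tau\in\Gal(\overline{\Q}/\Q)$ with $\tau(\sigma_{s+1}(u))=\sigma_1(u)$ produces a relation $\sigma_1(u)\sigma_i(u)=\sigma_j(u)\sigma_k(u)$ involving the real place $\sigma_1$ (this is where $s\geq 1$ anchors the argument). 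Either way $L(u)$ lies on one of finitely many \emph{proper} hyperplanes of the $s$-dimensional space $V$, and a subgroup of rank $s$ cannot be contained in a finite union of proper subspaces; hence $\rank U\leq s-1$. If you want to salvage your reduction, you would need to run this kind of Galois argument on elements of the kernel $\unit{K}^0$ itself, at which point you have essentially reproduced the paper's proof; the detour through $\phi(\unit{K})$ and transcendence theory does not buy anything.
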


\begin{proof}
    Define a group homomorphism $L\colon \unit{K} \to \R^{s+t}$ by
    \[
        L(u)=(\log \abs{\sigma_1(u)}, \ldots,\abs{\sigma_{s+t}(u)}).
    \]
    The kernel of $L$ is the group of roots of unity, 
    and the image of $L$ is a lattice in the hyperplane
    \[
        H\colon x_1+\cdots+x_s+2x_{s+1}+\cdots+2x_{s+t}=0
    \]
    by Dirichlet's unit theorem.
    We define hyperplanes $H_1,\dots,H_{t-1}\subset H$ by
    \[
        H_i\colon x_{s+i}=x_{s+i+1}.
    \]
    Then $L(U)$ is contained in the subspace $V=H_1\cap\cdots\cap H_{t-1}$, 
    which has dimension exactly $s$.
    We will prove that $L(U)$ is contained in a union of 
    finitely many hyperplanes of $V$, 
    which shows that $\rank U\leq s-1$.
    More precisely, 
    we define hyperplanes $P_2,\dots,P_{s+t}\subset V$ by
    \[
        P_i\colon x_1=x_i,
    \]
    and for $2\leq i,j,k\leq s+t$, 
    we define a hyperplane $P_{ijk}\subset V$ by
    \[
        P_{ijk}\colon x_1+x_i=x_j+x_k.
    \]
    We will prove that
    \[
        L(U)\subset \left(
        \bigcup_{i=2}^{s+t}P_i \right)
        \cup \left( 
        \bigcup_{i,j,k=2}^{s+t}P_{ijk} \right).
    \]

    Let $u\in U$.
    First we consider the case where $\deg(u)<[K:\Q]$.
    Let $K'=\Q(u)\subset K$, 
    and let $\sigma'$ be the real embedding of 
    $K'$ lying under $\sigma_1$.
    The number of embeddings of $K$ extending 
    $\sigma'$ is $[K:K']\geq 2$, 
    so there must exist $2\leq i\leq s+2t$ 
    such that $\sigma_i$ is an extension of $\sigma'$.
    In this case we get
    \[
        \sigma_1(u)=\sigma'(u)=\sigma_i(u).
    \]
    Therefore, 
    if we set
    \[
        \rho(i)=\begin{cases}
            i&(1\leq i\leq s+t)\\
            i-t&(s+t+1\leq i\leq s+2t),
        \end{cases}
    \]
    then we have $L(u)\in P_{\rho(i)}$.

    Next we consider the case where $\deg(u)=[K:\Q]$.
    In this case, 
    $\sigma_1(u),\dots,\sigma_{s+2t}(u)$ are the distinct conjugates of $u$ over $\Q$.
    Therefore, 
    there exists some $\tau\in \Gal(\overline{\Q}/\Q)$ 
    such that $\tau(\sigma_{s+1}(u))=\sigma_1(u)$.
    Since we are assuming that $t\geq 2$, 
    we have
    \[
        \sigma_{s+1}(u)\sigma_{s+t+1}(u)=\sigma_{s+2}(u)\sigma_{s+t+2}(u).
    \]
    Applying $\tau$ to this equation yields a new relation
    \[
        \sigma_1(u)\sigma_i(u)=\sigma_j(u)\sigma_k(u)
    \]
    for some $2\leq i,j,k\leq s+2t$.
    This shows that $L(u)\in P_{\rho(i)\rho(j)\rho(k)}$ and the proof is complete.
\end{proof}

We obtain Theorem \ref{Thm:type of OT with LCK is (s,1)} as a corollary of the above theorem. 

It is worth noting, 
even if unrelated to the central purpose of this paper, 
that a similar argument can be applied to the case of pluriclosed metrics as well. 
A \emph{pluriclosed metric} (also known as \emph{SKT metric}) 
is a metric $g$ on 
complex manifold such that $\pa \bpa \omega=0$, 
where $\omega$ is the fundamental form associated to $g$. 
As in the case of LCK metrics, 
there is a purely arithmetical condition 
for the existence of pluriclosed metrics on OT manifolds.

\begin{proposition}[\cite{Oti22}]
    Let $X(K,U)$ be an OT manifold of type $(s,t)$. 
    $X(K,U)$ admits a pluriclosed metric if and only if 
    $s \le t$ and after relabeling the indices of $\sigma_1, \ldots, \sigma_s$, 
    for any $u \in U$, 
    we have 
    \[
        \begin{cases}
            \sigma_1(u)|\sigma_{s+1}(u)|^2=\cdots=\sigma_s(u)|\sigma_{2s}(u)|^2=1,\\
            |\sigma_{2s+1}(u)|=\cdots=|\sigma_{s+t}(u)|=1.
        \end{cases}
    \]
\end{proposition}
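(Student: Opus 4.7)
The plan is to parallel the approach of Proposition \ref{Prop:LCK then absolute values of sigma unit is equal} and Theorem \ref{Thm:LCK OT is of type (s,1)}, but with the pluriclosed condition $\pa\bpa\omega=0$ in place of $d\omega=\theta\w\omega$. First I would reduce the existence of a pluriclosed metric on $X(K,U)=\Gamma\backslash G$ to the existence of a left-invariant one. This is a symmetrization argument using the Haar measure on the maximal compact torus arising from the imaginary part of the nilradical $\R^s \oplus \C^t$: averaging a pluriclosed $\omega$ over this torus action yields an invariant pluriclosed $(1,1)$-form, and combined with a Belgun-type symmetrization over the remaining solvable directions of $\Gamma\backslash G$, one obtains a left-invariant pluriclosed metric. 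Crucially, $\pa\bpa$-closedness is preserved under averaging since $\pa$ and $\bpa$ commute with pullback by isometries.

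Second, I would translate $\pa\bpa\omega=0$ into algebraic identities on the Lie algebra $\fg_C = \R^s \ltimes_{d\phi_C}(\R^s \oplus \C^t)$ via the Chevalley-Eilenberg complex. Using the dual basis $(u^i,v^i,z^i)$ and the differentials $du^i=0$, $dv^i=-u^i\w v^i$, $dz^i=-(\sum_j c_{ij}u^j)\w z^i$ from the proof of Proposition \ref{Prop:LCK metric on LCK OT-like algebras}, I would expand a general $J$-invariant positive $(1,1)$-form $\omega$ in these coordinates and compute $\pa\bpa\omega$ explicitly. Imposing its vanishing produces a linear system on the coefficients of $\omega$ coupled to the entries of $C$. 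After taking logarithms and comparing with Dirichlet's unit theorem, the identity $\sigma_i(u)|\sigma_{s+i}(u)|^2=1$ arises as the balance $\log\sigma_i(u)+2\log|\sigma_{s+i}(u)|=0$ between the $i$-th real eigenvalue of $d\phi_C$ and the real part of a complex eigenvalue, while $|\sigma_{2s+i}(u)|=1$ for the excess complex embeddings appears as the condition that no further nontrivial scaling can contribute to $\pa\bpa\omega$ without destroying positivity. The pairing forces each of the $s$ real embeddings to be matched with a distinct complex embedding, yielding $s\le t$.

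For the converse, I would construct a pluriclosed metric explicitly by analogy with Proposition \ref{Prop:OT of type (s,1) admits LCK}. Working on $\Half^s\times\C^t$, I would set $\omega=\sqrt{-1}\pa\bpa F_1 + \sqrt{-1}\pa F_2 \w \bpa F_2$-type expressions built from $\log(\Im w_i)$, $|z_j|^2(\Im w_i)^{-1}$ for $1\le j \le s$, and $|z_j|^2$ for $2s+1\le j\le s+t$, then verify $\pa\bpa\omega=0$ by direct calculation and check $(U\ltimes\OO{K})$-invariance by using the two arithmetic conditions in turn. The main obstacle I foresee is in the forward direction: showing that the same permutation of indices works simultaneously for every $u\in U$. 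In the LCK case this rigidity came from a Galois-conjugation argument (applied in Theorem \ref{Thm:LCK OT is of type (s,1)}), and adapting it here is delicate because one must distinguish \emph{two} families among the complex embeddings -- the $s$ paired with real embeddings via $\sigma_i(u)|\sigma_{s+i}(u)|^2=1$, and the $t-s$ unimodular ones -- in a way compatible with $\Gal(\overline{\Q}/\Q)$. Handling this case split via Galois action, likely by applying $\tau\in\Gal(\overline{\Q}/\Q)$ to the relations derived for an element of large degree and tracking how conjugation permutes the two families, is where the essential work lies.
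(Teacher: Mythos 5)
You are proposing a proof of a statement that the paper itself does not prove: this proposition is quoted from \cite{Oti22} and used as a black box (the paper's own contribution in the appendix is the subsequent theorem, proved by the Galois/hyperplane argument, that the displayed conditions force $s=t$). So there is no internal proof to compare with; measured against the cited literature, your outline --- symmetrize to a left-invariant metric, translate $\pa\bpa\omega=0$ into algebraic conditions on the matrix $C$ in the Chevalley--Eilenberg complex, and exhibit an explicit invariant pluriclosed form for the converse --- is indeed the standard route, so the plan is sound in outline but has genuine gaps as it stands.

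Concretely: (a) your reduction to invariant metrics is misstated; there is no global compact torus of biholomorphisms of $X(K,U)$ to average over, and ``$\pa,\bpa$ commute with pullback by isometries'' is not the mechanism. The correct tool is the Belgun/Fino--Grantcharov--Ugarte symmetrization: evaluate the given form on left-invariant vector fields and integrate against the bi-invariant volume of the unimodular solvmanifold; this commutes with $d$ and with the left-invariant $J$, hence preserves $\pa\bpa$-closedness and positivity. (b) The heart of the forward direction --- expanding a general invariant positive $(1,1)$-form and showing that $\pa\bpa\omega=0$ forces, after a permutation of the complex coordinates, $\Re c_{ij}=-\tfrac12\delta_{ij}$ for $i\le s$ and $\Re c_{ij}=0$ for $i>s$ (whence the pairing of each real direction with a distinct complex one and $s\le t$) --- is only announced, not carried out, and this is where all the work lies. (c) The obstacle you single out as essential, namely making one relabeling work for every $u\in U$ via a Galois argument, is a misdiagnosis: once the invariant computation is done, the conclusion is a single condition on the fixed matrix $C$, and then (\ref{Equ:definition of c_ij}) gives $\abs{\sigma_{s+i}(a_k)}^2=\exp(2\sum_j(\Re c_{ij})\kappa_{jk})=\sigma_i(a_k)^{-1}$ for $i\le s$ and $=1$ for $i>s$, simultaneously for all basis elements $a_k$ and hence for all $u\in U$ with one labeling; no Galois theory enters here --- that machinery belongs to the paper's separate appendix theorem excluding $s<t$. (d) For the converse, your proposed potentials do not produce the needed form; the natural choice is $\omega=\sum_{i=1}^s(\Im w_i)^{-2}\sqrt{-1}\,dw_i\w d\conj{w}_i+\sum_{i=1}^s(\Im w_i)\sqrt{-1}\,dz_i\w d\conj{z}_i+\sum_{j=s+1}^{t}\sqrt{-1}\,dz_j\w d\conj{z}_j$, which is pluriclosed because $\Im w_i$ is pluriharmonic, and whose invariance under $R_u$ is exactly the two displayed arithmetic conditions.
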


In \cite{ADOS24}, 
it is proved that conditions in the above proposition cannot hold 
unless $s=t$. 
Here, 
we present an alternative proof by employing 
the same technique in the proof of Theorem 
\ref{Thm:LCK OT is of type (s,1)}. 

\begin{theorem}
    Let $K$ be a number field with real embeddings $\sigma_1,\dots,\sigma_s$ 
    and complex embeddings $\sigma_{s+1},\dots,\sigma_{s+2t}$, 
    where $\sigma_{s+i}=\overline{\sigma_{s+t+i}}$ and $s\leq t$.
    Let $U$ be a subgroup of $\unit{K}$ such that for any $u\in U$, we have
    \[
        \begin{cases}
            \sigma_1(u) \abs{\sigma_{s+1}(u)}^2=\cdots=
            \sigma_s(u) \abs{\sigma_{2s}(u)}^2=1,\\
            \abs{\sigma_{2s+1}(u)}=\cdots=\abs{\sigma_{s+t}(u)}=1.
        \end{cases}
    \]
    If $1\leq s<t$, then $\rank U\leq s-1$.
\end{theorem}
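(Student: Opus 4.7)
The plan is to mimic the proof of Theorem~\ref{Thm:LCK OT is of type (s,1)}. Define the group homomorphism
\[
L\colon \unit{K} \to \R^{s+t}, \qquad L(u) = (\log\abs{\sigma_1(u)}, \ldots, \log\abs{\sigma_{s+t}(u)}),
\]
whose kernel is the finite group of roots of unity and whose image is a lattice in the hyperplane $H\colon x_1+\cdots+x_s+2x_{s+1}+\cdots+2x_{s+t}=0$, by Dirichlet's unit theorem. It then suffices to show that $L(U)$ is contained in a union of finitely many proper hyperplanes of an $s$-dimensional ambient subspace, which forces $\rank U = \rank L(U) \leq s-1$.

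The hypotheses on $U$ translate into linear constraints on $L(u)$: the relation $\sigma_i(u)\abs{\sigma_{s+i}(u)}^2=1$ gives $x_i + 2x_{s+i}=0$ for $1\leq i\leq s$ (note that $\sigma_i(u)>0$ here), while $\abs{\sigma_j(u)}=1$ gives $x_j=0$ for $2s+1\leq j\leq s+t$. A direct substitution shows that these constraints already imply $H$, so $L(U)$ lies in a subspace $V\subset H$ of dimension exactly $s$, coordinatized by $x_{s+1},\ldots,x_{2s}$.

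For each $u\in U$ I will produce an extra linear relation on $L(u)$ drawn from a finite list. If $\deg(u)<[K:\Q]$, the argument in Theorem~\ref{Thm:LCK OT is of type (s,1)} yields some $2\leq i\leq s+2t$ with $\sigma_1(u)=\sigma_i(u)$, hence $x_1 = x_{\rho(i)}$, with $\rho$ as defined there. If $\deg(u)=[K:\Q]$, the $\sigma_j(u)$ are distinct conjugates of $u$, and the assumption $s<t$ guarantees that the second family of relations is non-empty, e.g.\ $\sigma_{s+t}(u)\sigma_{s+2t}(u)=1$. Choosing $\tau\in\Gal(\conj{\Q}/\Q)$ with $\tau(\sigma_{s+t}(u))=\sigma_1(u)$ and applying $\tau$ to this two-term relation yields $\sigma_1(u)\sigma_k(u)=1$ for some $2\leq k\leq s+2t$ (with $k\neq 1$ since $\tau$ permutes the distinct conjugates), which gives $x_1+x_{\rho(k)}=0$. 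A short case check over the four ranges $\rho(k)=1$, $2\leq\rho(k)\leq s$, $s+1\leq\rho(k)\leq 2s$, $2s+1\leq\rho(k)\leq s+t$ (and similarly for $\rho(i)$ in the first case, noting $\rho(i)=1$ with $i\neq 1$ would force $s\leq 0$) verifies that each such relation does cut $V$ in a proper hyperplane.

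The main point, and the reason the strict inequality $s<t$ enters essentially, is that Galois-translating a \emph{three-term} relation $\sigma_i(u)\abs{\sigma_{s+i}(u)}^2=1$ would only produce another three-term identity, which need not restrict to a proper hyperplane of $V$. It is precisely the \emph{two-term} relations $\abs{\sigma_j(u)}=1$ in the second block, available only when $t>s$, that yield a two-term Galois translate and hence a usable hyperplane in $V$. The main obstacle is therefore not the algebra itself but the bookkeeping verifying that for every admissible value of $\rho(k)$ the resulting equation really is non-trivial on the $s$-dimensional $V$; this is a finite, routine but slightly tedious case analysis using $s\geq 1$.
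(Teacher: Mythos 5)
Your proposal is correct and follows essentially the same route as the paper's own proof: the same map $L$, the same $s$-dimensional subspace $V$ cut out by the pluriclosed constraints, the same split on $\deg(u)$, and the same Galois translation of a two-term relation $\abs{\sigma_j(u)}=1$ (available exactly because $s<t$) to land $L(u)$ in one of finitely many proper hyperplanes of $V$. The only differences are cosmetic (you use $j=s+t$ instead of $j=2s+1$ and spell out the properness check that the paper leaves implicit).
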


\begin{proof}
    Define a map $L\colon \unit{K} \to \mathbb{R}^{s+t}$ 
    and a hyperplane $H\subset \mathbb{R}^{s+t}$ 
    as in the proof of Theorem \ref{Thm:LCK OT is of type (s,1)}.
    We define hyperplanes $H_1,\dots,H_t\subset H$ by
    \[
        H_i\colon
        \begin{cases}
            x_i+2x_{s+i}=0\quad(1\leq i \leq s),\\
            x_{s+i}=0\quad(s+1\leq i \leq t).
        \end{cases}
    \]
    Then $L(U)$ is contained in the subspace $V=H_1\cap\cdots\cap H_t$, 
    which has dimension exactly $s$.
    We will prove that $L(U)$ is contained in a union of finitely many hyperplanes of $V$, 
    which shows that $\rank U\leq s-1$.
    More precisely, 
    we define hyperplanes $P_2,\dots,P_{s+t}\subset V$ and $Q_2,\dots,Q_{s+t}\subset V$ by
    \[
        P_i\colon x_1=x_i,\quad Q_i\colon x_1+x_i=0.
    \]
    We will prove that
    \[
        L(U)\subset \left( \bigcup_{i=2}^{s+t}P_i \right) 
        \cup \left( \bigcup_{i=2}^{s+t}Q_i \right).
    \]

    Let $u\in U$.
    If $\deg(u)<[K:\Q]$, 
    then we have $L(u)\in P_{\rho(i)}$ for some $2\leq i\leq s+2t$ 
    as in the proof of Theorem \ref{Thm:LCK OT is of type (s,1)}.
    Suppose that $\deg(u)=[K:\Q]$.
    In this case, 
    $\sigma_1(u),\dots,\sigma_{s+2t}(u)$ are the distinct conjugates of $u$ over $\Q$.
    Therefore, 
    there exists some $\tau\in \Gal(\overline{\Q}/\Q)$ 
    such that $\tau(\sigma_{2s+1}(u))=\sigma_1(u)$.
    Since we are assuming that $s<t$, 
    we have
    \[
        \sigma_{2s+1}(u)\sigma_{2s+t+1}(u)=1.
    \]
    Applying $\tau$ to this equation yields a new relation
    \[
        \sigma_1(u)\sigma_i(u)=1
    \]
    for some $2\leq i\leq s+2t$.
    This shows that $L(u)\in Q_{\rho(i)}$ and the proof is complete.
\end{proof}

\begin{corollary}[\cite{ADOS24}]
    An OT manifold $X(K,U)$ of type $(s,t)$ 
    admits a pluriclosed metric if and only if $s=t$ and  
    after relabeling the indices of $\sigma_1, \ldots, \sigma_s$, 
    for any $u \in U$, 
    we have 
    \[
        \sigma_1(u)\abs{\sigma_{s+1}(u)}^2=\cdots=
        \sigma_s(u)\abs{\sigma_{s+t}(u)}^2=1. 
    \]
\end{corollary}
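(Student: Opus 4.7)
The plan is to deduce the corollary by combining the preceding Proposition of \cite{Oti22} with the rank estimate of the Theorem just above. That Proposition characterizes the existence of a pluriclosed metric on $X(K,U)$ as the conjunction of the inequality $s \le t$ and the arithmetic relations
\[
\sigma_j(u)\abs{\sigma_{s+j}(u)}^2 = 1 \quad (1 \le j \le s), \qquad \abs{\sigma_{s+j}(u)} = 1 \quad (s < j \le t),
\]
holding for every $u \in U$ after a suitable relabeling of the real embeddings. Granting this, the only content of the corollary beyond the Proposition is the exclusion of the strict inequality $s < t$.

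Suppose that $X(K,U)$ admits a pluriclosed metric. Since $U$ is admissible, $\rank U = s$. Assume for contradiction that $s < t$. Then the above relations show that $U$ satisfies exactly the hypotheses of the preceding Theorem, and that Theorem forces $\rank U \le s - 1$, contradicting $\rank U = s$. Therefore $s = t$; the second family of relations becomes vacuous (its index range $2s+1,\dots,s+t$ is empty), and the first family is precisely the displayed condition in the corollary.

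The converse direction is immediate: if $s = t$ and the stated relation holds, then the hypotheses of the Oti22 proposition are satisfied, so $X(K,U)$ carries a pluriclosed metric. The entire argument hinges on the rank estimate of the preceding Theorem; once that is granted, nothing further is required. Accordingly, the main (and essentially the only) obstacle was the rank bound itself, which the paper has already dispatched via the Galois-theoretic argument analogous to the LCK case.
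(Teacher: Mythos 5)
Your proposal is correct and matches the paper's intended derivation: the corollary is obtained exactly by combining the Oti22 proposition with the preceding rank-bound theorem, using admissibility ($\rank U = s$) to rule out $s<t$, after which the remaining relations reduce to the displayed condition. Nothing is missing.
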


\begin{remark}
    Unlike the case in LCK metrics, 
    it cannot be determined whether an OT manifold admits a pluriclosed 
    metric based only on its type $(s,t)$. 
    For each $s \ge 1$, 
    the construction of an OT manifold of type $(s,s)$ which admits 
    a pluriclosed metric is given in \cite{Dub21}. 
\end{remark}

\bibliographystyle{amsalpha}
\bibliography{OT}

\providecommand{\bysame}{\leavevmode\hbox to3em{\hrulefill}\thinspace}
\providecommand{\MR}{\relax\ifhmode\unskip\space\fi MR }
% \MRhref is called by the amsart/book/proc definition of \MR.
\providecommand{\MRhref}[2]{%
  \href{http://www.ams.org/mathscinet-getitem?mr=#1}{#2}
}
\providecommand{\href}[2]{#2}
\begin{thebibliography}{ADOS24}

\bibitem[ADOS24]{ADOS24}
D.~Angella, A.~Dubickas, A.~Otiman, and J.~Stelzig, \emph{On metric and cohomological properties of {O}eljeklaus-{T}oma manifolds}, Publ. Mat. \textbf{68} (2024), no.~1, 219--239.

\bibitem[AO15]{AO15}
A.~Andrada and M.~Origlia, \emph{Locally conformally {K}\"ahler structures on unimodular {L}ie groups}, Geom. Dedicata \textbf{179} (2015), 197--216.

\bibitem[AO18]{AO18}
\bysame, \emph{Lattices in almost abelian {L}ie groups with locally conformal {K}\"ahler or symplectic structures}, Manuscripta Math. \textbf{155} (2018), no.~3-4, 389--417.

\bibitem[Aus73]{Aus73}
L.~Auslander, \emph{An exposition of the structure of solvmanifolds. {I}. {A}lgebraic theory}, Bull. Amer. Math. Soc. \textbf{79} (1973), no.~2, 227--261.

\bibitem[CFdL86]{CFdL86}
L.~A. Cordero, M.~Fern\'{a}ndez, and M.~de~L\'{e}on, \emph{Compact locally conformal {K}\"{a}hler nilmanifold}, Geom. Dedicata \textbf{21} (1986), no.~2, 187--192.

\bibitem[DO98]{DO98}
S.~Dragomir and L.~Ornea, \emph{Locally conformal {K}\"{a}hler geometry}, Progress in Mathematics, vol. 155, Birkh\"{a}user Boston, Inc., Boston, MA, 1998.

\bibitem[Dub14]{Dub14}
A.~Dubickas, \emph{Nonreciprocal units in a number field with an application to {O}eljeklaus-{T}oma manifolds}, New York J. Math. \textbf{20} (2014), 257--274.

\bibitem[Dub21]{Dub21}
\bysame, \emph{Units in number fields satisfying a multiplicative relation with application to {O}eljeklaus-{T}oma manifolds}, Results Math. \textbf{76} (2021), no.~2, Paper No. 78, 12 pp.

\bibitem[DV23]{DV23}
A.~Deaconu and V.~Vuletescu, \emph{On locally conformally {K}\"ahler metrics on {O}eljeklaus-{T}oma manifolds}, Manuscripta Math. \textbf{171} (2023), no.~3-4, 643--647.

\bibitem[Has06]{Has06}
K.~Hasegawa, \emph{A note on compact solvmanifolds with {K}\"ahler structures}, Osaka J. Math. \textbf{43} (2006), no.~1, 131--135.

\bibitem[Ino74]{Ino74}
M.~Inoue, \emph{On surfaces of {C}lass {${\rm VII}\sb{0}$}}, Invent. Math. \textbf{24} (1974), 269--310.

\bibitem[Kas13a]{Kas13a}
H.~Kasuya, \emph{Minimal models, formality, and hard {L}efschetz properties of solvmanifolds with local systems}, J. Differential Geom. \textbf{93} (2013), no.~2, 269--297.

\bibitem[Kas13b]{Kas13b}
\bysame, \emph{Vaisman metrics on solvmanifolds and {O}eljeklaus-{T}oma manifolds}, Bull. Lond. Math. Soc. \textbf{45} (2013), no.~1, 15--26.

\bibitem[Mal49]{Mal49}
A.~I. Mal\'{c}ev, \emph{On a class of homogeneous spaces}, Izv. Akad. Nauk SSSR Ser. Mat. \textbf{13} (1949), 9--32.

\bibitem[Mos54]{Mos54}
G.~D. Mostow, \emph{Factor spaces of solvable groups}, Ann. of Math. (2) \textbf{60} (1954), 1--27.

\bibitem[Neu99]{Neu99}
J.~Neukirch, \emph{Algebraic {N}umber {T}heory}, Grundlehren der mathematischen Wissenschaften, vol. 322, Springer, 1999.

\bibitem[OT05]{OT05}
K.~Oeljeklaus and M.~Toma, \emph{Non-{K}\"ahler compact complex manifolds associated to number fields}, Ann. Inst. Fourier (Grenoble) \textbf{55} (2005), no.~1, 161--171.

\bibitem[Oti22]{Oti22}
A.~Otiman, \emph{Special {H}ermitian metrics on {O}eljeklaus-{T}oma manifolds}, Bull. Lond. Math. Soc. \textbf{54} (2022), no.~2, 655--667.

\bibitem[OV90]{OV90}
A.~L. Onishchik and E.~B. Vinberg, \emph{Lie {G}roups and {L}ie {A}lgebras {I}}, Encyclopaedia Math. Sci., vol.~20, Springer, 1990.

\bibitem[OV00]{OV00}
\bysame, \emph{Lie {G}roups and {L}ie {A}lgebras {II}}, Encyclopaedia Math. Sci., vol.~21, Springer, 2000.

\bibitem[OV24]{OV24}
L.~Ornea and M.~Verbitsky, \emph{Principles of locally conformally {K}\"ahler geometry}, Progress in Mathematics, vol. 354, Birkh\"auser/Springer, Cham, 2024.

\bibitem[Rag72]{Rag72}
M.~S. Raghunathan, \emph{Discrete {S}ubgroups of {L}ie {G}roups}, Ergeb. Math. Grenzgeb., vol.~68, Springer, 1972.

\bibitem[Saw07a]{Saw07lattice}
H.~Sawai, \emph{A construction of lattices on certain solvable {L}ie groups}, Topology Appl. \textbf{154} (2007), no.~18, 3125--3134.

\bibitem[Saw07b]{Saw07}
\bysame, \emph{Locally conformal {K}\"ahler structures on compact nilmanifolds with left-invariant complex structures}, Geom. Dedicata \textbf{125} (2007), 93--101.

\bibitem[Saw21]{Saw21}
\bysame, \emph{On the structure theorem for {V}aisman solvmanifolds}, J. Geom. Phys. \textbf{163} (2021), Paper No. 104102, 8.

\bibitem[Saw24]{Saw24}
\bysame, \emph{A non-{V}aisman {LCK} solvmanifold associated to a one-dimensional extension of a 2-step nilmanifold}, Differential Geom. Appl. \textbf{96} (2024), Paper No. 102174, 11.

\bibitem[SY05]{SY05}
H.~Sawai and T.~Yamada, \emph{Lattices on {B}enson-{G}ordon type solvable {L}ie groups}, Topology Appl. \textbf{149} (2005), no.~1-3, 85--95.

\bibitem[Thu76]{Thu76}
W.~P. Thurston, \emph{Some simple examples of symplectic manifolds}, Proc. Amer. Math. Soc. \textbf{55} (1976), no.~2, 467--468.

\bibitem[TY01]{TY01}
N.~Tsuchiya and A.~Yamakawa, \emph{Codimension one locally free actions of solvable {L}ie groups}, Tohoku Math. J. (2) \textbf{53} (2001), no.~2, 241--263.

\bibitem[Vai76]{Vai76}
I.~Vaisman, \emph{On locally conformal almost {K}\"{a}hler manifolds}, Israel J. Math. \textbf{24} (1976), no.~3-4, 338--351.

\bibitem[Vai82]{Vai82}
\bysame, \emph{Generalized {H}opf manifolds}, Geom. Dedicata \textbf{13} (1982), no.~3, 231--255.

\bibitem[Vul14]{Vul14}
V.~Vuletescu, \emph{{LCK} metrics on {O}eljeklaus-{T}oma manifolds versus {K}ronecker's theorem}, Bull. Math. Soc. Sci. Math. Roumanie (N.S.) \textbf{57} (2014), no.~2, 225--231.

\end{thebibliography}
\end{document}